%
%
%

\documentclass[12pt]{amsart}
\usepackage{amsmath,amssymb,amsfonts,enumerate,amsthm}

\usepackage{graphicx}
\usepackage{subfigure}
\usepackage{color}

\usepackage[bookmarks,colorlinks,pagebackref]{hyperref} 

\textwidth=16.00cm \textheight=21.3cm \topmargin=0.00cm
\oddsidemargin=0.00cm \evensidemargin=0.00cm \headheight=14.4pt
\headsep=1cm \numberwithin{equation}{section}
\hyphenation{semi-stable} \emergencystretch=10pt


\newtheorem{thm}{Theorem}[section]

\newtheorem{cor}[thm]{Corollary}
\newtheorem{lem}[thm]{Lemma}
\newtheorem{lemma}[thm]{Lemma}
\newtheorem{lem defn}[thm]{Lemma and Definition}
\newtheorem{prop}[thm]{Proposition}

\newtheorem{defn rem}[thm]{Definition and Remark}
\newtheorem{exas}[thm]{Example}
\newtheorem{ind rem}[thm]{Introductory Remark}
\newtheorem{rem rem}[thm]{Reminder and Remark}
\newtheorem{exam}[thm]{Example}
\newtheorem{rem}[thm]{Remark}
\newtheorem{rem defn}[thm]{Remark and Definition}
\newtheorem{rem exam}[thm]{Remark and Example}

\newtheorem{nota rem}[thm]{Notation and Remark}
\newtheorem{prop defn}[thm]{Lemma and Definition}
\newtheorem{con not}[thm]{Convention and Notation}

\numberwithin{equation}{section}

\begin{document}

\bibliographystyle{amsplain}

\author[Brodmann]{Markus Brodmann}
\address{University of Z\"urich, Mathematics Institute, Winterthurerstrasse 190, CH -- 8057 Z\"urich. }
\email{brodmann@math.uzh.ch}

\author[Schenzel]{Peter Schenzel}
\address{Martin-Luther-Universit\"at Halle-Wittenberg,
Institut f\"ur Informatik, Von-Secken\-dorff-Platz 1, D -- 06120
Halle (Saale), Germany} \email{schenzel@informatik.uni-halle.de}

\date{Leipzig, Z\"urich, \today}

\keywords{Embedded Blowup, Embedded Isomorphism, Embedded Isotopy, Connecting Family}

\subjclass[2000]{}

\title[FAMILIES OF BLOWUPS OF THE REAL AFFINE PLANE]
 {FAMILIES OF BLOWUPS OF THE REAL AFFINE PLANE: CLASSIFICATION, ISOTOPIES AND VISUALIZATIONS}

\begin{abstract} We classify embedded blowups of the real affine plane up to oriented isomorphy. We show that two blowups in the same isomorphism class are isotopic, using a matrix deformation argument similar to an idea given in \cite{Sh}. This answers two questions which were motivated by the interactive visualizations of such blowups (see \cite{SS}, \cite{St}, \cite{St2}).   
\end{abstract}
\maketitle

\section{Introduction}

\noindent {\rm \bf{The Visualization Project for Blowups of the Real Affine Plane.}} 
{\rm
The present paper is primarily of theoretical nature. Nevertheless we begin with a "warm up" related to one aim of our whole Visualization Project, which is "to bring Algebraic Geometry to the Class Room" already on early undergraduate level. 
\begin{figure}
	\includegraphics[width=0.5\linewidth]{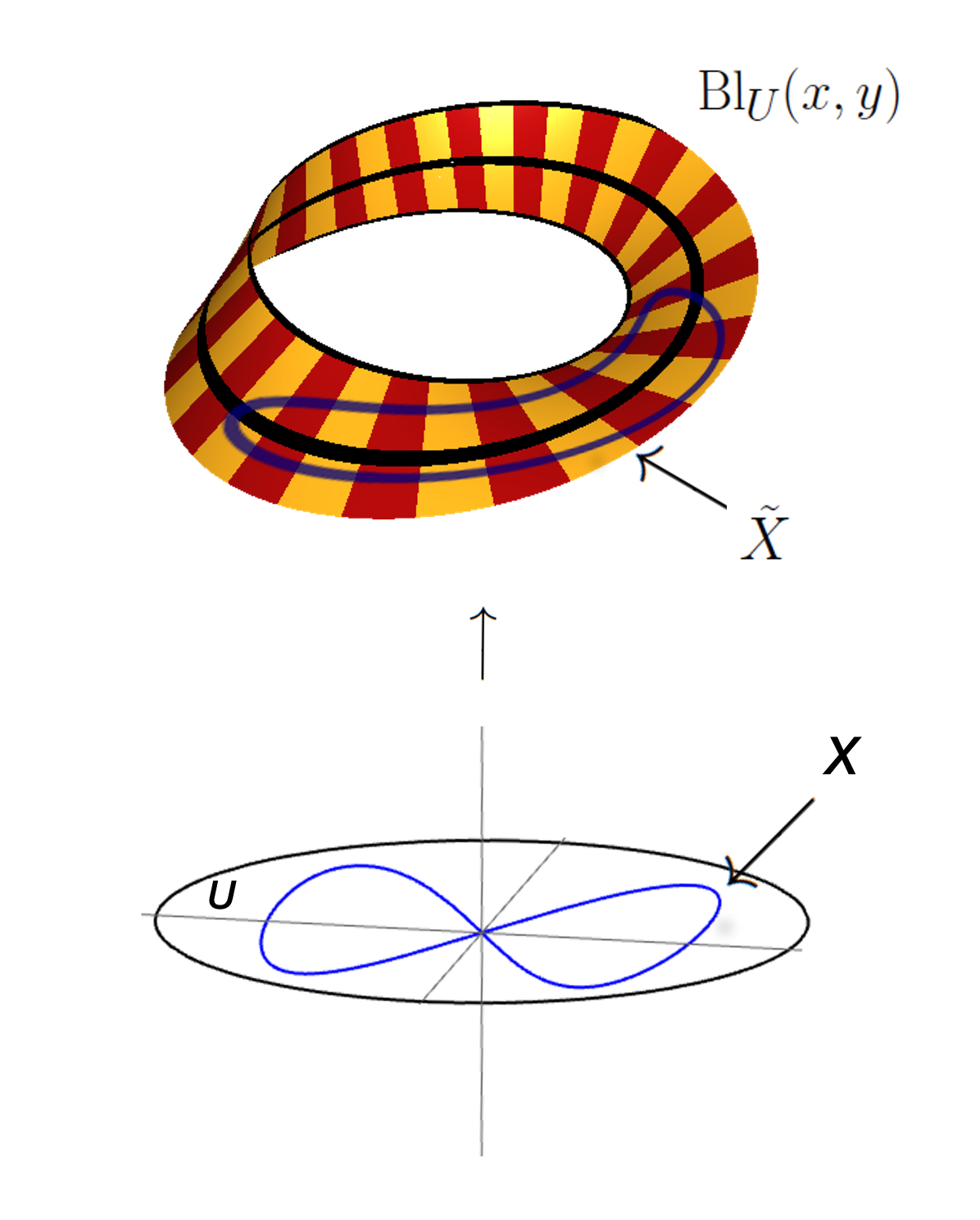}
	\caption[Blowing up the lemniscate]{The resolving effect of blowing up}
	\label{fig:moebius-2}
\end{figure}
Namely, in Figure 1 we illustrate the \textit{resolving effect of blowing up} -- a basic issue in Algebraic Geometry (see \cite{Hi} for the r\^ole of this effect  
	in the resolution of singularities of algebraic varieties in characteristic zero). Our example, which will be explained later in detail, shows how a simple nodal singularity of a plane curve is resolved by blowing up.}

{\rm Our paper is motivated by several investigations on the visualization of blowups of the real affine 
plane (see \cite{Bra},\cite{B1},\cite{B2},\cite{Kol},\cite{Kor},\cite{P}) in particular by the interactive 
visualizations suggested by the second named author and C. Stussak \cite{SS}.  
Our principal aim is to consolidate the theoretical background of our Visualization Project and focuses on the following two problems:}  

\begin{itemize}
\item[\rm{(1.0)}] \begin{itemize}
\item[\rm{(a)}] \textit{Deformation Problem}: "Can one connect two arbitrary oriented isomorphic embedded 
blowups of the real affine plane by a continuous family within their isomorphism class?"
\item[{(b)}] \textit{Classification Problem}: "Is there a simple criterion to detect whether two regular 
embedded blowups of the affine plane are oriented isomorphic?"
\end{itemize}
\end{itemize}

We shall see, that both of these problems find an affirmative answer (see Theorem~\ref{Theorem REIS} and Theorem~\ref{Theorem EI1}). At first view, these are results of 
theoretical nature -- but, indeed, they also are of considerable practical meaning: Namely, once having tested that two 
embedded blowups $B$ and $C$ of the real affine plane are oriented isomorphic, one can use the animated 
visualization procedure of \cite{SS} to produce a family or sequence of pictures which shows a deformation between the two 
blowups $B$ and $C$ within their common isomorphism class. Moreover, our answer to the classification problem 
gives an easy way to detect whether two regular embedded blowups are oriented isomorphic. We shall provide a 
number of examples of this, including illustrations based on the visualization program {\sc RealSurf}  
developed by C. Stussak (see \cite{St}).\\

\noindent{\rm \bf{Blowups of the Real Affine Plane.}} We now start to set the precise setting in which we shall 
work. Let $Z \subset \mathbb{R}^2$ be a finite set and let $U \subset \mathbb{R}^2$ be an open bounded and 
star-shaped set with closure $\overline{U}$ such that $Z \subset U$ -- for example an open disk containing 
$Z.$ We fix a pair of two-variate real polynomials. 

\begin{itemize}
\item[\rm{(1.1)}] $\underline{f} :=  (f_0,f_1) \in \mathbb{R}[{\bf x}, {\bf y}]^2$ such that 
$\mathrm{Z}_{\overline{U}}(\underline{f}) := \{p \in \overline{U} \mid f_0(p) = f_1(p) = 0\} = Z. $
\end{itemize}

\noindent We always shall denote by  $\mathbb{P}^1 := \{(x_0:x_1) = [(x_0,x_1)] \mid (x_0,x_1) \in \mathbb{R}^2 \setminus\{(0,0)\}\}$
\noindent the \textit{real projective line}, whereas the complex projective line will be denoted by $\mathbb{P}^1_{\mathbb{C}}$. \\ 
For any set $S \subset U \times \mathbb{P}^1$ we denote by $\overline{S}$ 
the \textit{Zariski closure} of $S$ in $U \times \mathbb{P}^1,$ 
that is the restriction to $U \times \mathbb{P}^1$ of the closure of $S$ with respect to the 
Zariski topology in the ambient complex algebraic variety 
$\mathbb{A}^2_{\mathbb{C}} \times \mathbb{P}^1_{\mathbb{C}}$. 
Now, the \textit{embedded blowup} $\mathrm{Bl}_U(\underline{f})$ of $U$ with respect to the pair 
$\underline{f}$ is defined as the Zariski closure of the graph of the map

\begin{itemize}
\item[\rm{(1.2)}] $\varepsilon_{U,\underline{f}}:U \setminus Z \longrightarrow \mathbb{P}^1,$ given by $p \mapsto [\underline{f}(p)] = (f_0(p):f_1(p))$ 
\end{itemize}
\noindent in $U \times \mathbb{P}^1.$ More precisely, our embedded blowup is given by 

\begin{itemize}
\item[\rm{(1.3)}] \begin{itemize} \item[\rm{(a)}] the set $\mathrm{Bl}_U(\underline{f}) := 
\overline{\{\big(p,[\underline{f}(p)])\big) \mid p \in U \setminus Z\}}$ and
\item[\rm{(b)}] the \textit{canonical projection} map $\pi_{U,\underline{f}}:\mathrm{Bl}_U(\underline{f}) 
\longrightarrow U,$ given by $(p,(x_0:x_1)) \mapsto p$ for all $(p,(x_0:x_1)) \in \mathrm{Bl}_U(\underline{f}) 
\subset U \times \mathbb{P}^1.$ 
\end{itemize}
\end{itemize}

\begin{itemize}
\item[\rm{(1.4)}] \begin{itemize}
               \item[\rm{(a)}] {\rm The set $Z$ is called the \textit{center} of the blowup $\mathrm{Bl}_U(\underline{f}),$ whereas}
               \item[\rm{(b)}] {\rm the graph $\mathrm{Bl}^\circ_U(\underline{f}) := 
               \{(p,[\underline{f}(p)]) \mid p \in U \setminus Z\} = \mathrm{Bl}_U(\underline{f}) \setminus (Z \times \mathbb{P}^1)$ of $\varepsilon_{U,\underline{f}}$ 
               is called the \textit{open kernel} of our embedded blowup, and}
               \item[\rm{(c)}] {\rm the set $\mathrm{E}_U(\underline{f}) := \pi_{U,\underline{f}}^{-1}(Z) = 
               \mathrm{Bl}_U(\underline{f}) \cap (Z \times \mathbb{P}^1)$ 
               is called the \textit{exceptional set} of this embedded blowup.}  
               \end{itemize}
\end{itemize}               

{\rm Our basic aim is to study the class of embedded blowups}

\begin{itemize}
\item[\rm{(1.5)}] $\mathfrak{Bl}_U(Z) := \{\mathrm{Bl}_U(\underline{f}) \mid \underline{f} \in \mathbb{R}[{\bf x},{\bf y}]^2 \mbox{ {\rm with} } Z_{\overline{U}}(\underline{f}) = Z\},$
\end{itemize}

\noindent up to (relative oriented embedded) isomorphisms -- a concept which will be defined below. If we write  $\mathrm{Bl}_U(\underline{f}) \in \mathfrak{Bl}_U(Z)$ we tacitly mean that 
$(f_0, f_1) = \underline{f} \in \mathbb{R}[{\bf x},{\bf y}]^2$ satisfies the condition $Z_{\overline{U}}(\underline{f}) = Z.$  
\medskip

\medskip

\noindent{\rm \bf{Isomorphisms of Embedded Blowups.}} {\rm A \textit{(relative oriented) automorphism} (we often omit the wording 
in brackets from now on) of $U \times \mathbb{P}^1$ is a map }
 
\begin{itemize}
\item[\rm{(1.6)}] \begin{itemize}
\item[\rm{(a)}] $\varphi = \varphi_M:U \times \mathbb{P}^1 \longrightarrow U \times 
\mathbb{P}^1 \mbox{ {\rm given by} } (p,[\underline{v}]) \mapsto (p,[\underline{v}M(p)])$ 
for all $p \in U$ and all $\underline{v} \in \mathbb{R}^2 \setminus\{\underline{0}\},$ where
\item[\rm{(b)}] $M \in \mathbb{R}[{\bf x},{\bf y}]^{2 \times 2}$ with $\mathrm{det}\big(M(p)\big) >0$ 
for all $p \in U.$  
\end{itemize}
\end{itemize}
 
{\rm It is indeed justified to call these maps automorphisms. Namely: If $M \in 
\mathbb{R}[{\bf x},{\bf y}]^{2\times 2}$ with $\mathrm{det}\big(M(p)\big) >0$ for all $p \in U,$ its 
inverse $M^{-1} \in \mathbb{R}({\bf x},{\bf y})^{2 \times 2}$ may be written in the form 
$M^{-1} = \frac{1}{\mathrm{det}M}N$ with $N \in \mathbb{R}[{\bf x},{\bf y}]^{2 \times 2}$ and 
$\mathrm{det}\big(N(p)\big) = \mathrm{det}\big(M(p)\big) >0$ for all $p \in U.$ It is immediate, that the map $\varphi_N$ is inverse 
to $\varphi_M.$ Observe that a relative oriented automorphism of $U \times \mathbb{P}^1$ leaves fix 
the fiber  $\{p\} \times \mathbb{P}^1 \cong \mathbb{P}^1$ of the canonical projection 
$\pi:U \times \mathbb{P}^1 \longrightarrow U$ over each point $p \in U$ and acts as an orientation preserving
\textit{M\"obius-Transformation} on this fiber. 

We say that two embedded blowups $B, C \in \mathfrak{Bl}_U(Z)$ are \textit{(relatively oriented embedded) 
isomorphic} (we often omit the wording in brackets from now on) -- and write $B \cong C$ -- if there is an 
automorphism $\varphi$ of $U \times \mathbb{P}^1$ such that $C = \varphi(B).$ This means in particular:}
 
\begin{itemize}
\item[\rm{(1.7)}]  If $B = \mathrm{Bl}_U(\underline{f}), C \in \mathfrak{Bl}_U(Z),$ then $B \cong C$ if and only if 
$C = \mathrm{Bl}_U(\underline{f}M)$ for some $M \in \mathbb{R}[{\bf x},{\bf y}]^{2 \times 2}$ with 
$\mathrm{det}\big(M(p)\big) >0$ for all $p \in U.$ 
\end{itemize}

 \medskip

\noindent{\rm \bf{Regular Embedded Blowups and their Classification.}} {\rm We say that the pair $\underline{f} = (f_0,f_1) \in 
\mathbb{R}[{\bf x},{\bf y}]^2$ is \textit{regular} with respect to $Z$ on $U$ if:}
 
\begin{itemize}
\item[\rm{(1.8)}] \begin{itemize} 
\item[\rm{(a)}] $\mathrm{Z}_{\overline{U}}(\underline{f}) = Z.$ 
\item[\rm{(b)}] The Jacobian 
$$\partial \underline{f} := \begin{pmatrix}\frac{\partial f_0}{\partial {\bf x}} & 
\frac{\partial f_1}{\partial{\bf x}}\\{} {} \\ \frac{\partial f_0}{\partial {\bf y}} & 
\frac{\partial f_1}{\partial {\bf y}}\end{pmatrix} \in \mathbb{R}[{\bf x},{\bf y}]^{2 \times 2} \mbox{ {\rm of} }  
\underline{f} \mbox{ {\rm  is of rank} } 2 \mbox{ {\rm  in all points} } p \in Z.$$
\end{itemize}  
\end{itemize}
 
\noindent{\rm If the pair $\underline{f} \in \mathbb{R}[{\bf x},{\bf y}]^2$ is regular with respect to $Z$ on $U,$ 
we call $\mathrm{Bl}_U(\underline{f})$ a \textit{regular embedded blowup} of the set $U$ along $Z$ -- 
and we define:}
 
\begin{itemize}
\item[\rm{(1.9)}] $\mathfrak{Bl}^{\mathrm{reg}}_U(Z) := \{\mathrm{Bl}_U(\underline{f}) \mid 
\underline{f} \in \mathbb{R}[{\bf x},{\bf y}]^2 \mbox{ {\rm is regular with respect to} } Z 
\mbox{ {\rm on} }U\}.$  
\end{itemize}

\noindent{\rm If we write $\mathrm{Bl}_U(\underline{f}) \in \mathfrak{Bl}^{\mathrm{reg}}_U(Z),$ 
	we tacitly mean that 
$\underline{f} \in \mathbb{R}[{\bf x},{\bf y}]^2$ is regular with respect to $Z$ on $U.$} 
{\rm If $B = \mathrm{Bl}_U(\underline{f}) \in \mathfrak{Bl}^{\mathrm{reg}}_U(Z)$ 
 with $Z \not= \emptyset$, there is a map, depending only on $B,$} 
 
\begin{itemize}
\item[\rm{(1.10)}] $\mathrm{sgn}_B: Z \longrightarrow \{\pm 1 \}$ given by 
$p \mapsto \mathrm{sgn}\big(\mathrm{det}(\partial \underline{f}(p))\big)$ for all $p \in Z$ 
\end{itemize}
 
\noindent{\rm (see Definition and Remark~\ref{Definition sgn}), called the \textit{sign distribution} of $B.$ \\
{\rm If $B \in \mathfrak{Bl}^{\mathrm{reg}}_U(Z)$ with $\#Z = n \in \mathbb{N}$, we call $B$ a \textit{regular 
(embedded) $n$-point blowup}. We shall present examples of such $n$-point blowups and families of them for $n =1$ 
(see Example~\ref{Exam.Moebius}), for $n=2$ (see Examples~\ref{Example.Reg.3-Point} (B) and (C)), for $n=3$ 
(see Examples~\ref{Example.Reg.3-Point} (A) and (B)) and for $n=4$ (see Example~\ref{Exam.4Points}).}\\

\noindent{\rm Our \textit{Classification Problem} (1.0)(b) is 
answered as follows (see Theorem~\ref{Theorem EI1}):}
 
\begin{itemize} 
\item[\rm{(1.11)}] {\bf \textit{Classification Theorem}:} Two embedded blowups 
$B,C \in \mathfrak{Bl}^{\mathrm{reg}}_U(Z),$ are relatively oriented embedded isomorphic if and only 
if they have the same sign distribution. Hence, for short:  $B \cong C $ if and only if 
$\mathrm{sgn}_B = \mathrm{sgn}_C.$ 
\end{itemize}

\medskip 

\noindent{\rm \bf{Isotopies of Blowups and the Deformation Theorem.}} {\rm Now, we turn to the \textit{Deformation Problem} (1.0)(a). 
Given $B = \mathrm{Bl}_U(\underline{f}) \in \mathfrak{Bl}_Z(U),$ we are interested in families $\big(B^{(t)}\big)_{t \in [0,1]} \subset \mathfrak{Bl}_Z(U),$ such that $B^{(0)} = B$ and $B^{(t)} \cong B$ for all $t \in [0,1].$ In view of (1.6) and (1.7) 
it is natural to consider such families which come from an \textit{isotopy} of $U \times \mathbb{P}^1$-automorphisms. This means:}}
 
\begin{itemize}
\item[\rm{(1.12)}] There is a family of relative oriented $U \times \mathbb{P}^1$-automorphisms $\big(\varphi^{(t)} = \varphi_{M^{(t)}}\big)_{t \in [0,1]},$ given by a $(2 \times 2)$-matrix $\widetilde{M} \in \mathbb{R}[{\bf x},{\bf y},{\bf t}]^{2 \times 2},$ such that
                   \begin{itemize}
                   \item[\rm{(a)}] for all $t \in [0,1]$ and all $p \in U,$ the matrix $M^{(t)} := \widetilde{M}({\bf x},{\bf y},t) \in \mathbb{R}[{\bf x},{\bf y}]^{2 \times 2}$ satisfies $\mathrm{det}\big(M^{(t)}(p)\big) >0;$ 
                   \item[\rm{(b)}] $M^{(0)} = {\bf 1}^{2 \times 2}$ and $B^{(t)} = \varphi^{(t)}(B) = \mathrm{Bl}_U(\underline{f}M^{(t)})$ for all $t \in [0,1].$ 
                   \end{itemize}
\end{itemize}
 
{\rm In this context we shall solve the Deformation Problem (1.0)(a) (see Theorem~\ref{Theorem REIS}):}
 
\begin{itemize}
\item[\rm{(1.13)}] {\bf \textit{Deformation Theorem}:} Let $B, C \in \mathfrak{Bl}_U(Z)$  be 
relatively oriented embedded isomorphic. 
Then, $B$ and $C$ are connected by an isotopy of $U \times \mathbb{P}^1$-automorphisms. More precisely, 
there is an isotopy $\big(\varphi^{(t)} = \varphi_{M^{(t)}}\big)_{t \in [0,1]}$ as in (1.12) such that 
$\varphi^{(0)}(B) = B$ and $\varphi^{(1)}(B) = C.$   
\end{itemize}

\medskip 

\noindent{\rm \bf{Deformation of Matrices.}} {\rm Our Deformation Theorem (1.13) is a consequence of the following deformation result for matrices (see Proposition~\ref{Proposition RCF 1} and Remark~\ref{Remark RCF}):}
 
\begin{itemize}
\item[\rm{(1.14)}] {\bf \textit{Polynomial Deformations of Matrices}}: Let $M \in \mathbb{R}[{\bf x},{\bf y}]^{2 \times 2}$ such that $\mathrm{det}\big(M(p)\big) > 0$ for all $p \in \overline{U}.$ Then $M$ is connected to the unit matrix ${\bf 1}^{2 \times 2}  \in \mathbb{R}^{2 \times 2}$ by a polynomial family of $(2 \times 2)$-matrices with positive determinants on $\overline{U}$. More precisely:
There is a $(2 \times 2)$-matrix $\widetilde{M} \in \mathbb{R}[{\bf x},{\bf y},{\bf t}]^{2 \times 2},$ such that with $M^{(t)} := \widetilde{M}({\bf x},{\bf y},t) \quad$ 
we have
\begin{itemize} 
\item[\rm{(a)}] $\mathrm{det}\big(M^{(t)}(p)\big) > 0$ for all $t \in [0,1]$ and all $p \in \overline{U}.$
\item[\rm{(b)}] $M^{(0)} = {\bf 1}^{2 \times 2}$ and $M^{(1)} = M.$    
\end{itemize}
\end{itemize}  

\medskip

\noindent{\rm \bf{The Visualization Procedure.}} {\rm We now present a visualization procedure for embedded blowups  $B = \mathrm{Bl}_U(\underline{f}) \in \mathfrak{Bl}_U(Z).$ We use a method originally suggested in \cite{Bra} and
\cite{B1} -- in the modified form given in \cite{SS}. So, let $\rho, r \in \mathbb{R}$ with $0 < \rho < r$ and consider}
 
\begin{itemize}
\item[\rm{(1.15)}] \begin{itemize} 
\item[\rm{(a)}] the open disk $\mathbb{D} := \{(x,y) \in \mathbb{R}^2 \mid x^2 + y^2 < \rho^2\} \subset \mathbb{R}^2,$ with $U \subseteq \mathbb{D}$ and
\item[\rm{(b)}] the open solid torus $\mathbb{T} := \{(u,v,w) \in \mathbb{R}^3 \mid u^2 + \big(r-\sqrt{v^2+w^2}\big)^2 < \rho^2\} \subset \mathbb{R}^3$
\end{itemize}
\end{itemize}
 
{\rm together with the diffeomorphism}
 
\begin{itemize}
\item[\rm{(1.16)}] $\iota: \mathbb{D} \times \mathbb{P}^1 \stackrel{\cong}\longrightarrow \mathbb{T},$ given by 
$$\big((x,y),(x_0:x_1)\big) \mapsto \big(x,(r-y)\frac{x_0^2 - x_1^2}{x_0^2 + x_1^2}, (r-y)\frac{2x_0x_1}{x_0^2 + x_1^2}\big), \mbox{{\rm for all} } (x,y) \in U, (x_0:x_1) \in \mathbb{P}^1.$$
\end{itemize}
 
 
\begin{itemize} 
\item[\rm{(1.17)}] The blowup $B = \mathrm{Bl}_U(\underline{f})$ is visualized by its diffeomorphic image 
$$\iota\big(\mathrm{Bl}_U(\underline{f})\big) = \iota\big(\mathrm{Bl}^\circ_U(\underline{f})\big)\; \dot{\cup} \; \iota\big(\mathrm{E}_U(\underline{f})\big) \subset \mathbb{T}, \mbox{ {\rm so that we have:} }$$ 
\begin{itemize}
\item[\rm{(a)}] $\iota\big(\mathrm{Bl}^\circ_U(\underline{f})\big) = \{\big(x,(r-y)\frac{f_0(x,y)^2 - f_1(x,y)^2}{f_0(x,y)^2 + f_1(x,y)^2}, (r-y)\frac{2f_0(x,y)f_1(x,y)}{f_0(x,y)^2 + f_1(x,y)^2}\mid (x,y) \in U \setminus Z\}.$
\item[\rm{(b)}] $\iota\big(\mathrm{E}_U(\underline{f})\big) \subseteq \iota(Z \times \mathbb{P}^1) = \dot{\bigcup}_{p \in Z} \; \iota(\{p\} \times \mathbb{P}^1).$ 
\item[\rm{(c)}] If $p=(x,y) \in Z$, then $\iota(\{p\} \times \mathbb{P}^1) \subset \mathbb{T}$ is the circle of radius $r-y$ given by:
\begin{align*} \iota(\{p\} \times \mathbb{P}^1) &= \{\big(x,(r-y)\frac{x_0^2 - x_1^2}{x_0^2 + x_1^2}, (r-y)\frac{2x_0x_1}{x_0^2 + x_1^2}\big) \mid (x_0,x_1) \in \mathbb{R}^2 \setminus \{(0,0)\}\} \\
 &= \{\big(x,(r-y)\mathrm{cos}(\beta), (r-y)\mathrm{sin}(\beta) \mid -\pi \leq \beta \leq \pi \}. 
\end{align*} 
\end{itemize}
\end{itemize}
 
{\rm Observe that $\iota\big(\mathrm{Bl}^\circ_U(\underline{f})\big) \subset \mathbb{T}$ is a surface without boundary and that $\iota(Z \times \mathbb{P}^1) \subset \mathbb{T}$ is a finite union 
of circles parallel to the central circle of $\mathbb{T}$ and centered at the rotation axis of $\mathbb{T}.$ \\ 
For each point $p \in Z,$ the accumulation points (with respect to the \textit{strict} (or \textit{metric}) \textit{topology}) of the open kernel 
$B^\circ = \mathrm{Bl}^\circ_U(\underline{f})$ in the fiber $\pi^{-1}_{U,\underline{f}}(p)$ are called 
the \textit{limit points} of $B$ above $p.$ We denote the set of these limit points by $\mathcal{L}_p(B),$ thus:}
\begin{itemize}
\item[\rm{(1.18)}] $\mathcal{L}_p(B) := \{(p,s) \in \{p\} \times \mathbb{P}^1 \mid \exists \big(p_n\big)_{n \in \mathbb{N}} \subset U \setminus Z: \mathrm{lim}_{n \rightarrow \infty} (p_n,\varepsilon_{U,\underline{f}}(p_n)) = (p,s)\}.$
\end{itemize}

The sets $\mathcal{L}_p(B)$ are of particular importance for the shape of the embedded blowup $B.$ Therefore, in some of our illustrations, their images  
$\iota\big(\mathcal{L}_p(B)\big)$ are colored in bold black and they usually appear as closed arcs on the circle $\iota(\big\{p\} \times \mathbb{P}^1\big).$ 

\medskip

\noindent{\rm \bf{The Technique of Visualization.}} For visualizations 
the parametric presentation given in (1.17) is used by Brandenberg (see \cite{Bra}) and also by 
Brodmann and Prager (see \cite{B1} and \cite{P}) for a very few examples. The difficulty 
of the parametrization for further examples is its instability in the neighborhood 
of $Z$ (see also Prager in \cite{P} for a further discussion). The new idea of C. Stussak 
(see \cite{St2} and \cite{SS}) -- which will be applied in this paper -- was to derive the implicit equation of the parametrized 
surface (based on the work of \cite{B1}) and to use the program {\sc{RealSurf}} (see \cite{St}) for its 
visualization. {\sc RealSurf} is a graphic GPU-program for the visualization of algebraic surfaces. It allows an interactive 
view of algebraic surfaces in $\mathbb{A}^3_{\mathbb{R}} = \mathbb{R}^3$ in real time. \\
In his PhD dissertation (see \cite{St2}) C. Stussak studied exact rasterization of algebraic curves and surfaces for the visualization on a personal computer with 
GPU-programming. As an application of his technique he and the second named author studied interactive visualizations of blowups of the real affine plane 
(see \cite{St2} and \cite{SS}). These interactive visualizations are based on {\sc RealSurf} with several adaptations for the particular situation of our concrete examples 
(see \cite{SS} for the technical details). The modified program allows continuous parameter changes by mouse action. With the help of these modifications we produced the 
pictures of the present paper. We are grateful to C. Stussak for making the adaption of {\sc{RealSurf}} 
available to us.

The pictures were produced on a PC with graphic cards {\sc nVidia} GT 525 {\sc Windows} 7.

\medskip

\noindent{\rm {\bf {A Few Preliminary Examples.}} {\rm Let us first recall the notion of \textit{affine standard charts} of an embedded blowup $B = \mathrm{Bl}_U(\underline{f}) \in \mathfrak{B}_U(Z),$
which are given by}
\begin{itemize}
 \item[\rm{(1.19)}] $\big(B\big)_i = \big(\mathrm{Bl}_U(\underline{f})\big)_i := \{(p,\frac{f_j(p)}{f_i(p)}) \in U \times \mathbb{R} \mid p \in U, f_i(p) \neq 0\} \quad (i,j \in \{0,1\}, i \neq j).$  
\end{itemize}
{\rm Keep in mind, that the blowup $B$ is obtained by pasting together the two affine standard charts $\big(B\big)_i, (i=0,1)$ by identifying (for $w \neq 0$) the two points $(p,w)$ and $(p,\frac{1}{w})$ of 
$U \times \mathbb{R}.$ Moreover, we can say: 
\begin{itemize}
 \item[\rm{(1.20)}] {\rm If $f_0$ and $f_1$ have no common divisor, then $\big(\mathrm{Bl}_U(\underline{f})\big)_i = Z_{U \times \mathbb{R}}(f_i{\bf w} - f_j), (i,j \in \{0,1\}, i\neq j\},$}
\end{itemize}
{\rm where $Z_{U\times \mathbb{R}}(h) := \{(x,y,w) \in U \times \mathbb{R} \mid h(x,y,z) = 0\}$ for $h \in \mathbb{R}[{\bf x},{\bf y},{\bf w}]. $\\}
\noindent{\rm To present two basic examples of blowups, we 
choose $\rho =2, r = 4, Z = \{(0,0)\}, U = \mathbb{D} = \{(x,y) \in \mathbb{R}^2 \mid x^2 + y^2 < 4\}.$ 
Then, for the choice $f_0 = {\bf x}, f_1 = {\bf y},$ the blowup $\mathrm{Bl}_U(\underline{f})$ is regular and appears as a \textit{M\"obius Strip} 
under our visualization process (see Figure 2 (a); see also \cite{H}, pg. 29, Figure 3, and \cite{Sha}, pg. 100, Figure 6, which both present 
sketches of an affine standard chart of this blowup). 

{\rm For the choice $f_0 = {\bf x}^2, f_1 = {\bf y}^2,$ the blowup $\mathrm{Bl}_U(\underline{f})$ is not regular and appears as 
a \textit{Double Whitney Umbrella} (see Figure 2 (b)). Indeed, according to (1.20) the two embedded standard affine charts of this blowup are given respectively 
by $Z_{U\times \mathbb{R}}({\bf x}^2{\bf w} - {\bf y}^2), Z_{U\times \mathbb{R}}({\bf y}^2{\bf w} - {\bf x}^2) 
\subset \mathbb{R}^3$ and hence appear as Whitney Umbrellas folded along the positive ${\bf w}$-axis and rotated 
around this axis with respect to each other by $90^\circ.$ 

 We now explain in detail the example shown in Figure 1 which illustrates the resolving effect of blowing up. 
	We choose $\rho, r, Z, U$ as above, set $f_0 = {\bf x},f_1 = {\bf y}$} and consider the lemniscate 
	$X :=  \{(x,y) \in \mathbb{R}^2 \mid x^2 - y^2 - \frac{1}{2}x^4 = 0\} = 
	Z_{\mathbb{A}^2_{\mathbb{C}}}({\bf x}^2 - {\bf y}^2 - \frac{1}{2}{\bf x}^4) \cap \mathbb{R}^2 \subset U,$ which has 
	a nodal singularity of multiplicity $2$ at the origin $\underline{0} := (0,0)$ and is smooth elsewhere.
	Finally we consider the so called \textit{strict transform}
$$
\widetilde{X} := \overline{\pi_{U,({\bf x},{\bf y})}^{-1}(X \setminus \{(0,0)\})} = 
\overline{\pi_{U, ({\bf x},{\bf y})}^{-1}(X) \cap \mathrm{Bl}_U^\circ({\bf x},{\bf y})} \subset 
\mathrm{Bl}_U({\bf x},{\bf y})
$$ 
{\rm of  $X,$ (with respect to the pair $({\bf x},{\bf y}) \in \mathbb{R}[{\bf x},{\bf y}]$) which is a non-singular curve contained in our embedded blowup $\mathrm{Bl}_U({\bf x},{\bf y})$ 
	(see Example 4.9.1 in Chapter I of \cite{H}) -- and hence appears as a smooth simple closed curve on a M\"obius strip -- as illustrated in Figure 2.
	The resolving effect of the same blowup is also illustrated on 
	an affine standard chart in  \cite{H}, pg. 29, Figure 3, but with a plane nodal quadric
	curve instead of a lemniscate. We did choose the lemniscate as its whole strict transform  
	appears on the blowup $\mathrm{Bl}_U({\bf x},{\bf y}).$}

\begin{figure}[t]
	\centering
	\begin{tabular}{cc}
		
		\includegraphics[width=.35\linewidth]{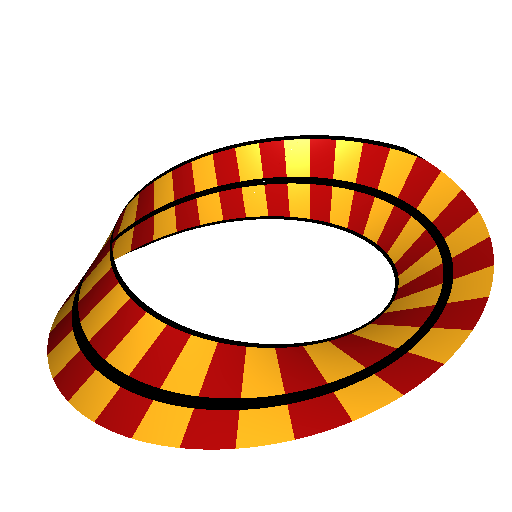}&  \\
		\small (a)
	\end{tabular} \qquad
	\begin{tabular}{c}
        \includegraphics[width=.4\linewidth]{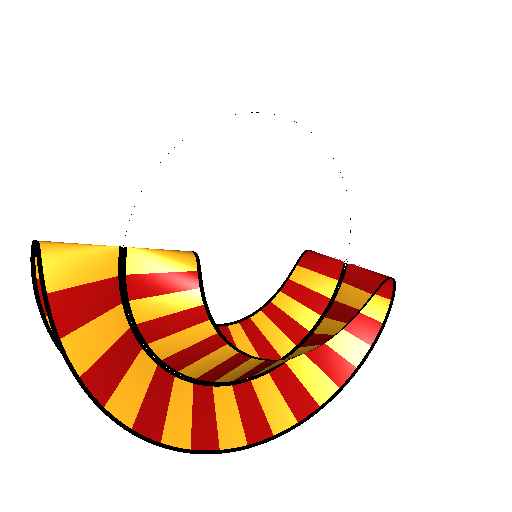} \\
		\small (b)
	\end{tabular}
	\caption{(a) \;M\"obius Strip $\quad\quad$ (b)\; Double Whitney Umbrella}
\end{figure}

\medskip

\noindent{\rm \bf{Acknowledgement.}} The authors thank the referees for their very careful and tedious study of the 
manuscript and their many critical comments. These lead us to perform a number of modifications and clarifications. They also thank the editor for his helpful hints concerning the final revision of the Manuscript. Finally they express their gratitude toward the \textit{Max-Planck-Institut f\"ur Mathematik in den Naturwissenschaften, Leipzig} for the offered hospitality during the preparation of this work.

\section{First Examples of Families of Blowups}

\noindent{\rm \bf{Examples and their Visualizations.}} {\rm We shall continue with a few examples of families of 
embedded blowups and their visualizations. Already now, we present three examples, which give a first flavor of 
the subject and illuminate some typical features. Again, as in the examples visualized by Figure 2, we choose 
$\rho =2, r = 4$ and $U = \mathbb{D} = \{(x,y) \in \mathbb{R}^2 \mid x^2 + y^2 < 4\}.$} 

\begin{exam}\label{Exam.Moebius} {\rm In our first example, we consider the most simple regular blowup of the real 
affine plane, namely the \textit{regular one-point blowup} 
$B: = \mathrm{Bl}_U({\bf x},{\bf y}),$ whose visualization shows up as a M\"obius strip (see Figure 2(a)). We 
deform this blowup by means of the family of polynomial matrices}
$$\big(M^{(t)} := \begin{pmatrix} 1-t & \frac{t}{2}\\ -\frac{t}{2} & 1+t \end{pmatrix}\big)_{t \in ]-\frac{2}{3}\sqrt{3},\frac{2}{3}\sqrt{3}[} 
\mbox{ {\rm with} } \mathrm{det}(M^{(t)}) = 1 - \frac{3}{4}t^2 >0 \mbox{ {\rm for} }  t 
\in ]-\frac{2}{3}\sqrt{3},\frac{2}{3}\sqrt{3}[.$$ 
{\rm  This leads us to the family of regular embedded blowups $\big(B^{(t)}\big)_{t \in ]-\frac{2}{3}\sqrt{3},\frac{2}{3}\sqrt{3}[}$ with} 
$$B^{(t)} := \mathrm{Bl}_U\big(({\bf x},{\bf y})M^{(t)}\big) = \mathrm{Bl}_U(f_0^{(t)}= 
(1-t){\bf x} - \frac{t}{2}{\bf y},f_1^{(t)} = \frac{t}{2}{\bf x} + (1+t){\bf y}\big) \in \mathfrak{Bl}^{\mathrm{reg}}_U\big(\{\underline{0}\}\big)$$
{\rm and }
$$Z = Z_U\big(\underline{f}^{(t)} := (f_0^{(t)},f_1^{(t)})\big) = \{(0,0)\} 
\mbox{ {\rm for all} } t \in ]-\frac{2}{3}\sqrt{3},\frac{2}{3}\sqrt{3}[.$$
{\rm In view of Figure 2(a) we expect that the visualization $\big(\iota(B^{(t))}\big)_{t \in ]-\frac{2}{3}\sqrt{3},\frac{2}{3}\sqrt{3}[}$ 
of this family presents itself as a deformation of a M\"obius strip. In Figure 3 we present this deformation 
for the values $t=0, t=0.4$ and $t=1.$ We also allow ourselves to leave the range $0 \leq t \leq 1$ and consider 
the three values $t=1.15, t=1.2$ and $t=1.4,$ which come close or lie beyond the critical value 
$t = \frac{2}{3}\sqrt{3} = 1.15470\ldots \quad.$ \\
These choices illustrate the following fact: If $t$ takes its critical values $\pm \frac{2}{3}\sqrt{3},$ 
the two linear forms $f_0^{(t)}$ and $f_1^{(t)}$ are linearly dependent and hence do not define a blowup in our 
sense. If $t \notin [-\frac{2}{3}\sqrt{3}, \frac{2}{3}\sqrt{3}]$ the blowup $B^{(t)}$ shows up again as a M\"obius 
strip, but reversely twisted along its central circle. }

\begin{figure}[b]
	\centering
	\begin{tabular}{ccc}
		\includegraphics[width=.3\linewidth]{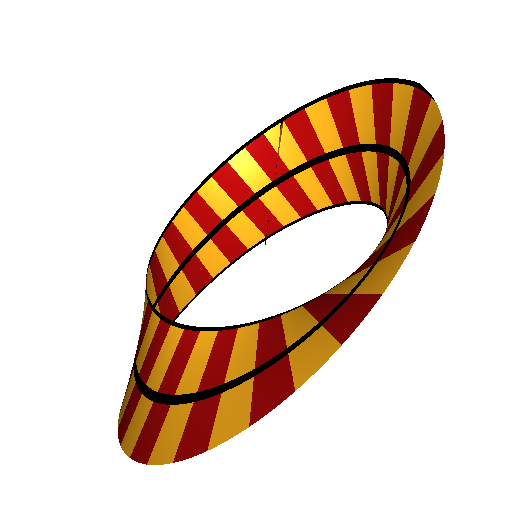}&  \includegraphics[width=.3\linewidth]{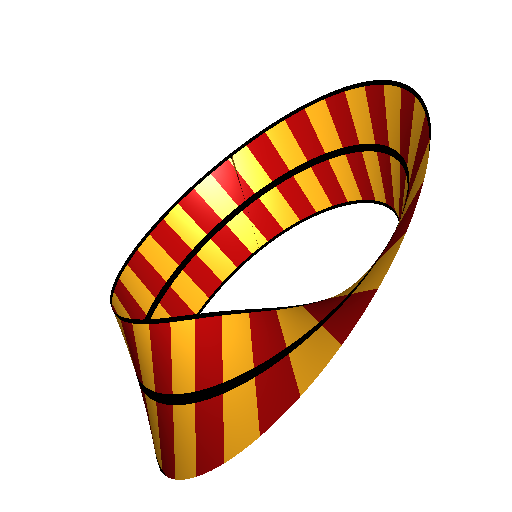}& 
		\includegraphics[width=.3\linewidth]{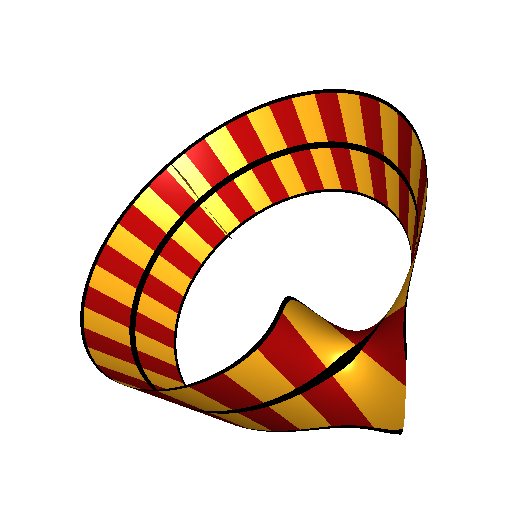}\\
		\small $B^{(0)}$ &  \small $B^{(0.4)}$  & \small $B^{(1.0)}$
	\end{tabular} 
	\\
	\begin{tabular}{ccc}
		\includegraphics[width=.3\linewidth]{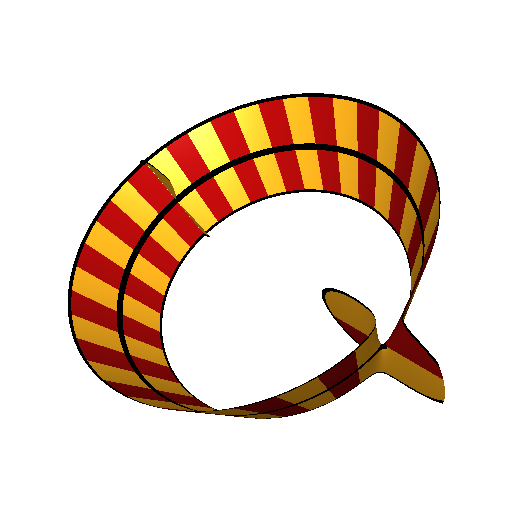} & 
		\includegraphics[width=.3\linewidth]{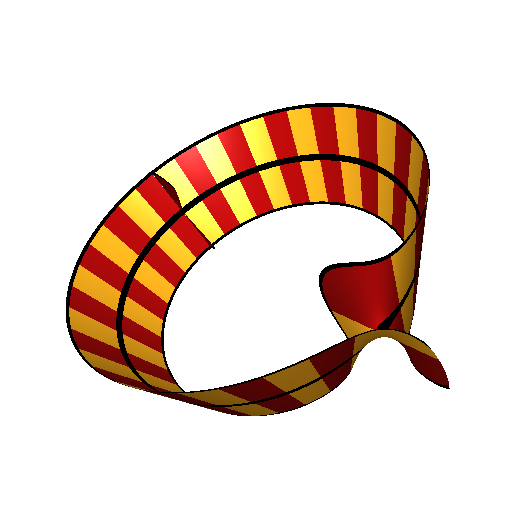} &
		\includegraphics[width=.3\linewidth]{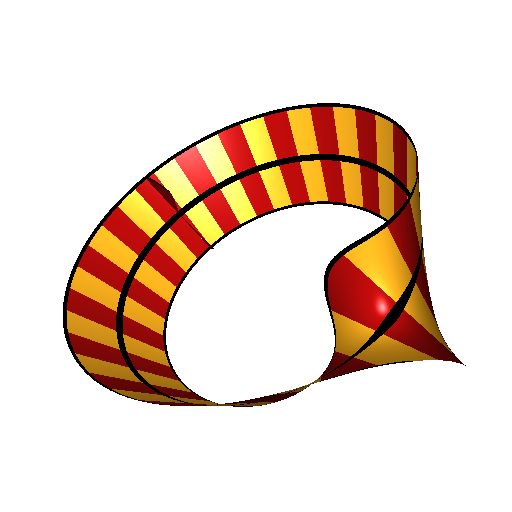}
		\\
		\small $B^{(1.15)}$ & \small $B^{(1.2)}$ & \small $B^{(1.4)}$
	\end{tabular}
	\caption{Deformation of a M\"obius Strip} 
	
\end{figure}
\end{exam}

\begin{exam} \label{Exam.4Points} {\rm As a second example, we consider a family of \textit{regular four-point blowups} 
of the real affine plane, which is indeed a modification of the example shown in Figure 9 of \cite{SS}. To this 
end, we choose $a \in [0,1]$ and consider the two pairs of polynomials $\underline{f} := (f_0,f_1)$ and 
$\underline{g} := (g_0,g_1) \in \mathbb{R}[{\bf x}, {\bf y}]^2$ given by} 
\begin{gather*}	
	f_0 = {\bf x}^2 - \frac{1}{2} {\bf y}^2 -  \frac{1}{2}, \quad f_1 = -  \frac{1}{2}{\bf x}^2+ {\bf y}^2 - 
\frac{1}{2} \mbox{ {\rm and }} \\
	g_0 = {\bf x}^2 + (a - \frac{1}{2}) {\bf y}^2-a -\frac{1}{2}, \quad g_1 = 
(a-\frac{1}{2}){\bf x}^2 + {\bf y}^2-a-\frac{1}{2}.
\end{gather*}	
{\rm Then $\det(\partial \underline{f}) = 3{\bf x}{\bf y}$ and $\det(\partial \underline{g}) = 
4(1-(a-\frac{1}{2})^2){\bf x}{\bf y}$. 
Taking ${\bf x}$-resultants, we get $\mathrm{Res}_{\bf x}(g_0,g_1) = (((a-\frac{1}{2})^2-1)(1-{\bf y}^2))^2$. 
As $(a-\frac{1}{2})^2-1 < 0$ for $a \in [0,1]$ it follows that} 
$$
Z = Z_U(\underline{f}) = Z_U(\underline{g}) = \{(1,1),(1,-1),(-1,1),(-1,-1)\}.
$$
\rm  In particular $\mathrm{det}(\partial \underline{f})(p)$ and 
$\mathrm{det}(\partial \underline{g})(p)$ are $\neq 0$ for all $p \in Z,$ so that $\underline{f}$ and 
$\underline{g}$ are regular pairs with respect to $Z$ on $U,$ with $B := \mathrm{Bl}_U(\underline{f}), 
C: = \mathrm{Bl}_U(\underline{g}) \in \mathfrak{Bl}^{\mathrm{reg}}_U(Z).$  
Moreover, 
\[
\underline{g} = \underline{f}M \mbox{ {\rm with} } 
M = 
\begin{pmatrix} 1 + \frac{2}{3}a& \frac{4}{3}a \\  
\frac{4}{3}a & 1 + \frac{2}{3}a \end{pmatrix}.
\]
{\rm so that $M \in \mathbb{R}[{\bf x},{\bf y}]^{2 \times 2}$ with $\mathrm{det}\big(M(p)\big) = 
1 + \frac{4}{3}a(1-a) > 0$ for all $p \in U.$ Setting}
$$
\widetilde{M} := \begin{pmatrix}
1 +\frac{2}{3}a{\bf t} & \frac{4}{3}a{\bf t}\\ 
\frac{4}{3}a{\bf t} & 1 +\frac{2}{3}a{\bf t} 
\end{pmatrix} 
\in \mathbb{R}[{\bf x},{\bf y}, {\bf t}]^2 \mbox{ {\rm and} } 
M^{(t)} := \begin{pmatrix}
1 +\frac{2}{3}at & \frac{4}{3}a t\\ 
\frac{4}{3}a t & 1+ \frac{2}{3} at  
\end{pmatrix} \mbox{ {\rm for all} } t \in [0,1]
$$
{\rm we get $\mathrm{det}(M^{(t)}) = (1 + \frac{2}{3}at)^2-\frac{16}{9}(at)^2 > 0$  
for all $t \in [0,1].$  Moreover, $M^{(0)} = {\bf 1}^{2 \times 2}$ and $M^{(1)} = M.$ 
So, $\big(M^{(t)}\big)_{t \in [0,1]}$ is a family which connects ${\bf 1}^{2 \times 2}$ and 
$M.$  Correspondingly $\big(\varphi^{(t)} := \varphi_{M^{(t)}}\big)_{t \in [0,1]}$ is an isotopy.  
As $\mathrm{det}(M^{(t)}) > 0$ for all $t \in [0,1]$ and 
$\mathrm{det}\big(\partial(\underline{f}M^{(t)})\big) = 
\mathrm{det}(M^{(t)})\mathrm{det}(\partial\underline{f})$ it is 
clear that
 $$
 \big(B^{(t)} = \varphi^{(t)}(B) = \mathrm{Bl}_U(\underline{f}M^{(t)})\big)_{t \in [0,1]}
 $$
is a family of regular blowups $B^{(t)} \in \mathfrak{Bl}^{\mathrm{reg}}_U(Z)$ with $B^{(0)} = B$ 
and $B^{(1)} = C.$ \\
We now choose $a = 1.$ Then looking at the conics $f_0^{(t)} = 0$ and $f_1^{(t)} = 0$ defined by the two 
polynomials} 
$$
f_0^{(t)}, f_1^{(t)} \in \mathbb{R}[{\bf x},{\bf y}] \mbox{ {\rm with} } 
\underline{f}^{(t)} := (f_0^{(t)}, f_1^{(t)}) = \underline{f}M^{(t)} \mbox{ {\rm for all} } t \in [0,1]
$$
{\rm we have the following situation: Two hyperbolas ($t=0$) are deformed to two ellipses ($t=1$) via a degeneration to a pair of lines ($t = \frac{1}{2}$).
A rough visualization of this family is shown in Figure 4.}

\begin{figure}
	\centering
	\begin{tabular}{ccc}
		\includegraphics[width=.3\linewidth]{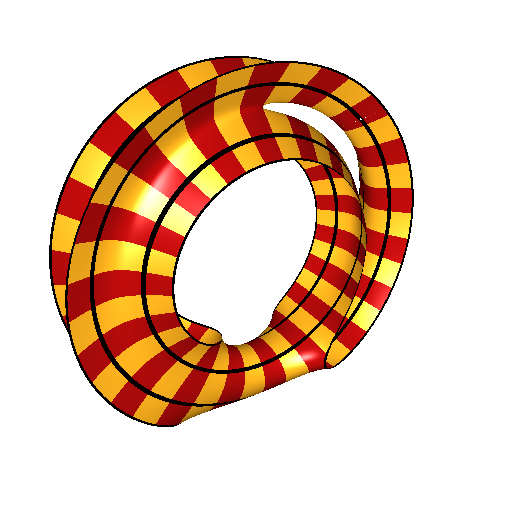}&  \includegraphics[width=.3\linewidth]{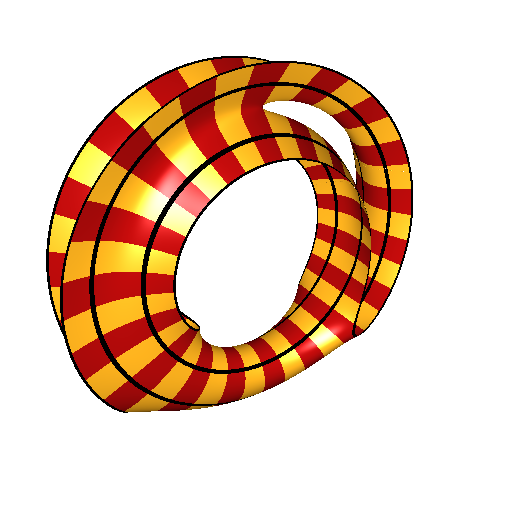}& 
		\includegraphics[width=.3\linewidth]{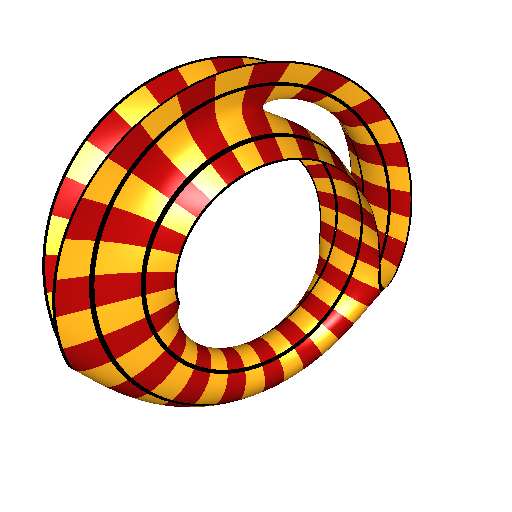}\\
		 \small $B^{(0)}$ &  \small $B^{(0.25)}$  & \small $B^{(0.5)}$
	\end{tabular} 
\\
	\begin{tabular}{cc}
		\includegraphics[width=.3\linewidth]{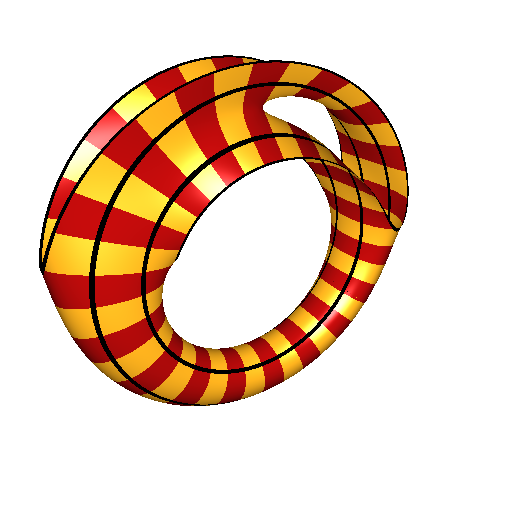} & 
		\includegraphics[width=.3\linewidth]{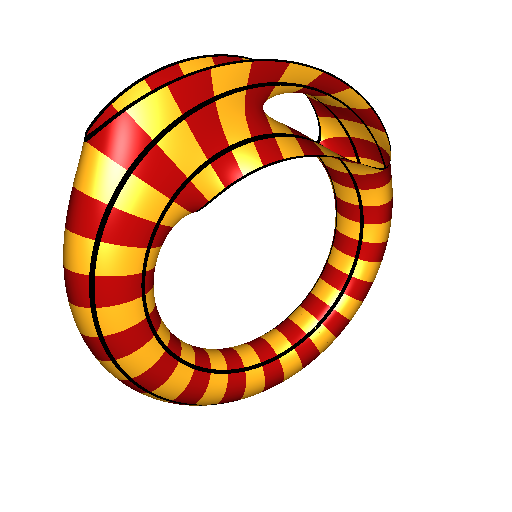}\\
		\small $B^{(0.75)}$ & \small $B^{(1)}$
	\end{tabular}
	\caption{Deformation of a regular four-point blowup} 
	
\end{figure}
\end{exam}

\begin{exam}\label{Exam.Whitney} {\rm Up to now, we have considered two families of regular blowups of the 
real affine plane. Next, we aim to consider a family of blowups, which is obtained by deforming the singular 
blowup $B := \mathrm{Bl}_U({\bf x^2},{\bf y^2}),$ whose visualization shows up as a Double Whitney Umbrella (see Figure 2(b)). We fix the matrix} 
\[
\widetilde{M} = \widetilde{M}({\bf x},{\bf y},{\bf t}) := 
\begin{pmatrix} 
	1 - {\bf t} & \frac{1}{2}{\bf t} \\ -\frac{1}{2}{\bf t} & 1 + {\bf t}
	\end{pmatrix} \in \mathbb{R}[{\bf x},{\bf y},{\bf t}]^{2 \times 2} \mbox{ {\rm with} } \mathrm{det}(\widetilde{M}) = 1 - \frac{3}{4} {\bf t}^2.
\]
{\rm For all $t \in \mathbb{R}$ we set}
\[
M^{(t)} := \widetilde{M}({\bf x},{\bf y}, t) = 
	\begin{pmatrix} 
		1 -  t & \frac{1}{2} t \\ -\frac{1}{2} t & 1 +  t
	\end{pmatrix} 
	\in \mathbb{R}[{\bf x},{\bf y}]^{2 \times 2}, \mbox{ {\rm so that} } \mathrm{det}(M^{(t)}) = 1 - \frac{3}{4} t^2.
\]
{\rm Clearly, $\mathrm{det}(M^{(t)}) > 0$ whenever $|t| < \frac{2}{3}\sqrt{3}$, so that }
$\big(\varphi_{M^{(t)}} = \varphi^{(t)}\big)_{t \in ]-\frac{2}{3}\sqrt{3},\frac{2}{3}\sqrt{3}[}$ {\rm is an isotopy of 
$U \times \mathbb{P}^1$-automorphisms.  Thus for any blowup $B = \mathrm{Bl}_U(\underline{f}) \in \mathfrak{Bl}_U(Z)$ we get a family}
$\big(B^{(t)} := \mathrm{Bl}_U(\underline{f}M^{(t)})\big)_{t \in ]-\frac{2}{3}\sqrt{3}, \frac{2}{3}\sqrt{3}[}$
 with $B^{(t)} \in \mathfrak{Bl}_U(Z) \mbox{ {\rm and} } B^{(t)} \cong B \mbox{ {\rm for all} } t \in ]-\frac{2}{3}\sqrt{3},\frac{2}{3} \sqrt{3}[.$
\begin{figure}[b]
	\centering
	\begin{tabular}{ccc}
		\includegraphics[width=.3\linewidth]{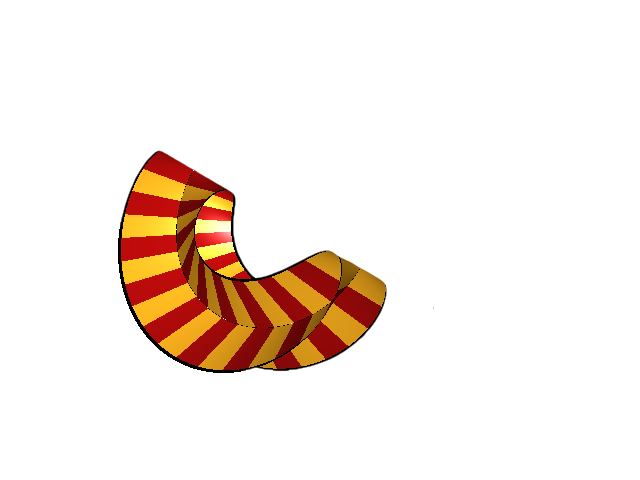}&  \includegraphics[width=.3\linewidth]{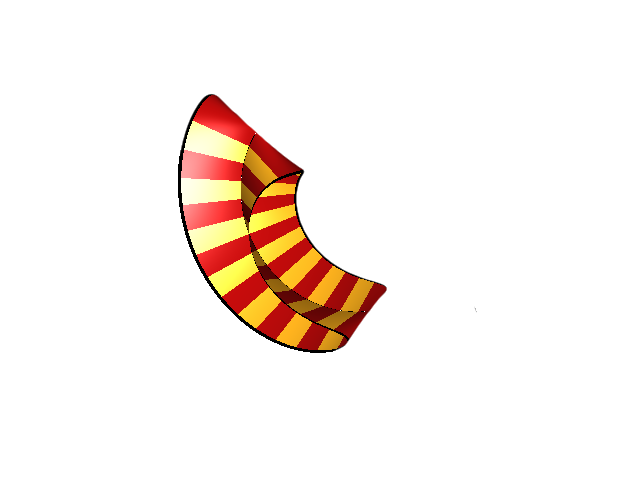}& 
		\includegraphics[width=.3\linewidth]{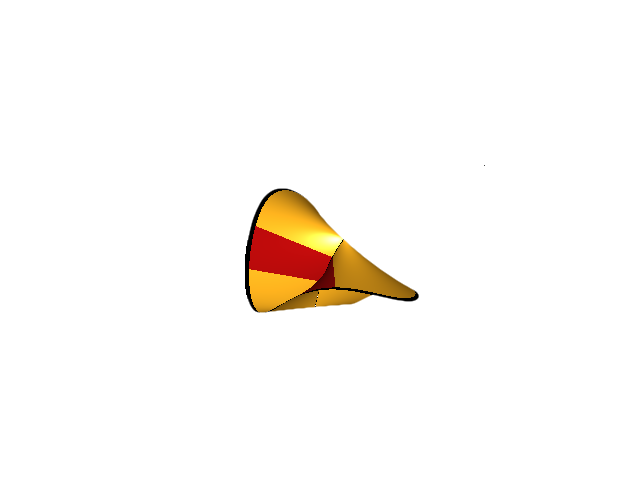}\\
		\small $B^{(0)}$ &  \small $B^{(0.5)}$  & \small $B^{(1)}$
	\end{tabular} 
	\\
	\begin{tabular}{ccc}
		\includegraphics[width=.3\linewidth]{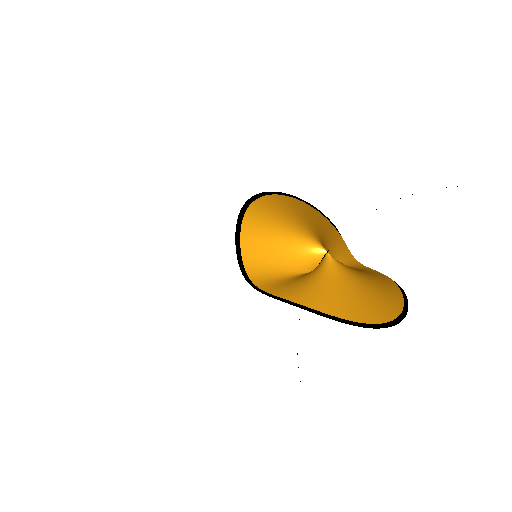} & 
		\includegraphics[width=.3\linewidth]{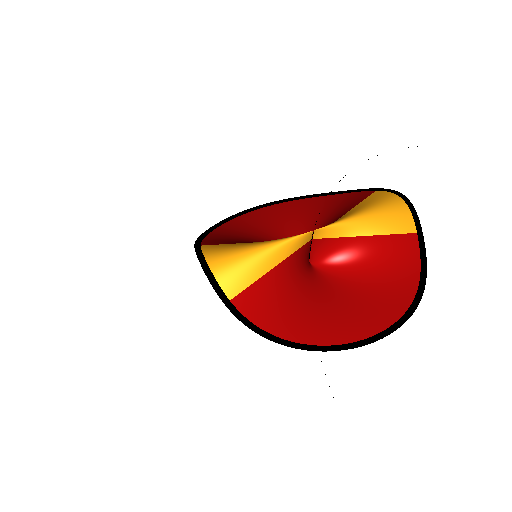} &
		\includegraphics[width=.3\linewidth]{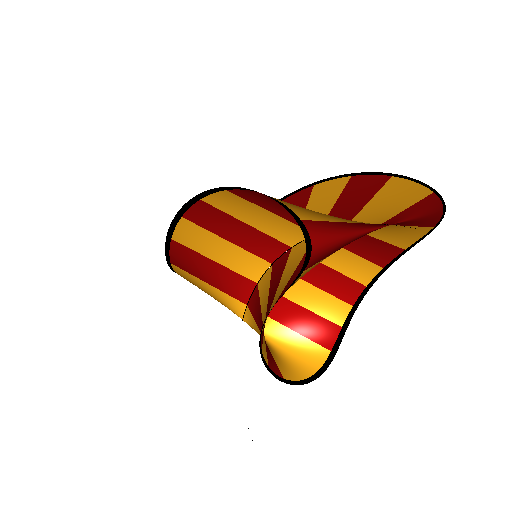}
		\\
		\small $B^{(1.1)}$ & \small $B^{(1.25)}$ & \small $B^{(4)}$
	\end{tabular}
	\caption{Deformation of a Double Whitney Umbrella}
	
\end{figure}
\end{exam}

{\rm With $f_0 = {\bf x}^2, f_1 = {\bf y}^2$ and $\underline{f}^{(t)} := \underline{f}M^{(t)}$ we then have}
$$Z := Z_U(\underline{f}^{(t)}) = \{\underline{0}\} \mbox{ {\rm for all} } t \neq \pm\frac{2}{3}\sqrt{3}.$$ 
{\rm In Figure 5, the blowups $B^{(t)}$ are visualized 
	in $\mathbb{R}^3$ for $t = 0, \, 0.5, \, 1, \, 1.1, \, 1.25, \, 4$. Remember that $B = B^{(0)}$ is the so-called Double Whitney Umbrella.}

{\rm Note that while passing from $t = 1.1$ to $t = 1.25$ (hence by passing through the critical value 
$t = \frac{2}{3}\sqrt{3}$) the orientation of embedded blowup $B^{(t)}$ swaps. 
Observe also, that the fiber 
$\pi^{-1}_{U, \underline{f}^{(t)}}(\underline{0}) = \{\underline{0}\} \times \mathbb{P}^1$ 
of $B^{(t)}$ over $\underline{0}$ is visualized by the same circle for all 
$t \neq \pm\frac{2}{3}\sqrt{3}$ and that the corresponding 
set of limit points $\mathcal{L}_{\underline{0}}(B^{(t)})$ is visualized by an arc on this circle, whose length depends on $t.$ 
Near to the degeneration value $t = \frac{2}{3} \sqrt{3}$ we enlarged the scale of our visualization in order 
to improve the picture of the details. Therefore the coloring appears larger for the last three values 
of $t$.}

\section{Structure and Classification of Regular Embedded Blowups}

 \noindent{\rm  \bf{Equality of Embedded Blowups.}} {\rm  Let all 
 	notations be as in the Introduction. Our first aim is to make clear, when the embedded blowups of $U$ with respect 
 	to two pairs of polynomials are equal}}.

\begin{prop} \label{Proposition Equal}  Let $Z, W \subset U$ be two finite sets and let 
$\underline{f} = (f_0,f_1), \underline{g} = (g_0,g_1) \in \mathbb{R}[{\bf x},{\bf y}]^2$ such that 
$Z_{\overline{U}}(\underline{f}) = Z$ and $Z_{\overline{U}}(\underline{g}) = W.$
\begin{itemize}
	\item[\rm{(a)}] Then $\mathrm{Bl}_U(\underline{f}) = \mathrm{Bl}_U(\underline{g})$ if and only 
	if $f_0g_1 = f_1g_0.$
	\item[\rm{(b)}] If $f_0$ and $f_1$ have no common divisor, the pair $\underline{f}$ is uniquely 
	determined by $\mathrm{Bl}_U(\underline{f})$ up to multiplication with a non-zero constant.  
\end{itemize}             
\end{prop} 

\begin{proof} (a):  As $U \setminus (Z \cup W) \neq \emptyset,$ we have $\underline{f}, \underline{g} \neq (0,0).$ 
Assume first, that $\mathrm{Bl}_U(\underline{f}) = \mathrm{Bl}_U(\underline{g}).$ Then clearly 
$\mathrm{Bl}_U(\underline{f}) \setminus \big((Z \cup W) \times \mathbb{P}^1\big) = \mathrm{Bl}_U(\underline{g}) 
\setminus((Z \cup W) \times \mathbb{P}^1\big).$ As $\mathrm{E}_{U}(\underline{f}),  \mathrm{E}_{U}(\underline{g}) 
\subseteq (Z \cup W) \times \mathbb{P}^1$ (see (1.4)(c)), $\mathrm{Bl}_U(\underline{f}) = 
\mathrm{Bl}^\circ_U(\underline{f}) \, \dot{\cup} \, \mathrm{E}_{U}(\underline{f})$ and 
$\mathrm{Bl}_U(\underline{g}) = \mathrm{Bl}^\circ_U(\underline{g})\, \dot{\cup} \, \mathrm{E}_{U}(\underline{g}),$ 
it follows that $\mathrm{Bl}^\circ_U(\underline{f}) \setminus 
\big((Z \cup W) \times \mathbb{P}^1\big) = \mathrm{Bl}^\circ_U(\underline{g})\setminus \big((Z \cup W) 
\times \mathbb{P}^1\big).$ But according to the definition of the operation $\mathrm{Bl}^{\circ}_U(\bullet)$ of 
taking open kernels (see (1.4)(b)), this means that the graphs of the two restricted maps 
$\varepsilon_{U,\underline{f}}\upharpoonright, \varepsilon_{U,\underline{g}}\upharpoonright: 
U \setminus (Z \cup W) \longrightarrow \mathbb{P}^1$ (and thus these restricted maps themselves) coincide. So, 
for all $p \in U \setminus (Z \cup W)$ it holds $(f_0(p) : f_1(p)) = (g_0(p) : g_1(p)).$ Assume now, that 
$f_0 \neq 0.$ Then, there is a dense open subset $V \subseteq U \setminus (Z \cup W)$ such that $f_0(p) \neq 0$ 
and $(f_0(p) : f_1(p)) = (g_0(p):g_1(p))$ for all $p \in V.$ As $V \neq \emptyset$ is open in $\mathbb{R}^2,$ it 
follows that the two rational functions $\frac{f_1}{f_0}, \frac{g_1}{g_0} \in \mathbb{R}({\bf x},{\bf y})$ are 
defined and coincide and hence that $f_0g_1 = f_1g_0.$ If $f_0 = 0$ we have $f_1 \neq 0$ and hence may conclude 
similarly.\\
Assume now that $f_0g_1 = f_1g_0.$ Suppose first, that $f_0 \neq 0.$ Then $g_0 =0$ would imply $g_1 =0$ 
and hence the contradiction that $\underline{g} = (0,0).$ So, we have $g_0 \neq 0.$ Therefore we find a 
dense open subset $V \subseteq U \setminus (Z \cup W)$ such that for all $p \in V$ we have 
$f_0(p), g_0(p) \neq 0$ and $(f_0(p):f_1(p)) = (g_0(p) : g_1(p)).$ This means, that the two restricted 
maps $\varepsilon_{U,\underline{f}}\upharpoonright, \varepsilon_{U,\underline{g}}\upharpoonright: 
V \longrightarrow \mathbb{P}^1$ coincide and hence have the same graph 
$$
S := \{\big(p,(f_0(p):f_1(p)\big) = \big(p,(g_0(p):g_1(p))\big) \mid p \in V\} \subseteq \mathrm{Bl}^\circ_U(\underline{f}) \cap \mathrm{Bl}^\circ_U(\underline{g}).
$$
As $V$ is open and dense in $U \setminus Z,$ the isomorphism $\pi_{U,\underline{f}}\upharpoonright: \mathrm{Bl}^\circ_U(\underline{f}) \stackrel{\cong}{\longrightarrow} U \setminus Z$ yields that $S :=  \big(\pi_{U,\underline{f}}\upharpoonright\big)^{-1}(V)$ is open and dense in $\mathrm{Bl}^\circ_U(\underline{f})$ with respect to the strict (e.g. metric) topology of $U \times \mathbb{P}^1,$ so that $S$ and $\mathrm{Bl}^\circ_U(\underline{f})$ have the same strict closure. The same applies for $S$ and $\mathrm{Bl}^\circ_U(\underline{g}).$ Therefore, $\mathrm{Bl}^\circ_U(\underline{f})$ and $\mathrm{Bl}^\circ_U(\underline{g})$ have the same strict closure. As the Zariski topology is coarser than the strict topology, it follows, that these two sets also have the same Zariski closure, and hence that $\mathrm{Bl}_U(\underline{f}) = \mathrm{Bl}_U(\underline{g})$ (see (1.3)(a)).\\
(b): Assume neither $f_0$ and $f_1$ nor $g_0$ and $g_1$ have a common divisor and that $\mathrm{Bl}_U(\underline{f}) = \mathrm{Bl}_U(\underline{g}).$ By statement (a) we get $f_0g_1 = f_1g_0.$ As $\mathbb{R}[{\bf x},{\bf y}]$ is factorial we find some $c \in \mathbb{R} \setminus \{0\}$ such that $\underline{g} = c\underline{f}.$  
\end{proof}

\noindent{\rm \bf{Structure of Regular Embedded Blowups.}} {\rm We next prove a structure result for regular blowups.}

\begin{prop}\label{Proposition Structure} Let $B \in \mathfrak{Bl}_U^{\mathrm{reg}}(Z).$ Then $B$ is a smooth real algebraic hyper-surface in $U \times \mathbb{P}^1.$
\end{prop}

\begin{proof} {\rm Let $\underline{f} = (f_0,f_1) \in \mathbb{R}[{\bf x},{\bf y}]^2$ be a regular pair 
on $U$ with respect to $Z,$ such that $B = \mathrm{Bl}_U(\underline{f}).$ Let 
$h := {\bf z}_0f_1({\bf x},{\bf y}) - {\bf z}_1f_0({\bf x},{\bf y}) \in \mathbb{R}[{\bf x},{\bf y},{\bf z}_0,{\bf z}_1].$}
{\rm If $(x,y) \in U \setminus Z$ and $(u:v) \in \mathbb{P}^1$ we have $h(x,y,u,v) = 0$ if and only if 
$\big((x,y),(u:v)\big) \in B^\circ,$  so that $B^\circ = \{\big((x,y), (u:v)\big) \in (U \setminus Z) \times \mathbb{P}^1 \mid h(x,y,u,v) = 0\}.$ 
Passing to Zariski closures we get (see (1.3)(a)) $B = \{((x,y),(u:v)) \in U \times \mathbb{P}^1 \mid h(x,y,u,v) = 0\}.$ It remains to show, that 
$$
\big(\frac{\partial h}{\partial {\bf x}}(x,y,u,v), \frac{\partial h}{\partial {\bf y}}(x,y,u,v), \frac{\partial h}{\partial {\bf z}_0}(x,y,u,v), 
\frac{\partial h}{\partial {\bf z}_1}(x,y,u,v)\big) \neq \underline{0},
$$
whenever $\big((x,y),(u:v)\big) \in B.$ As $\frac{\partial h}{\partial {\bf z}_0} = f_1$ and $\frac{\partial h}{\partial {\bf z}_1} = - f_0,$ this is clear 
if $p := (x,y) \notin Z.$ If $p = (x,y) \in Z,$ we have $\mathrm{rank}\big((\partial \underline{f})(p)\big) = 2,$ and $(u,v) \neq (0,0)$ shows that
$$
\big(\frac{\partial h}{\partial {\bf x}}(x,y,u,v), \frac{\partial h}{\partial {\bf y}}(x,y,u,v)\big) = 
\big(u\frac{\partial f_1}{\partial {\bf x}}(p) - v\frac{\partial f_0}{\partial {\bf x}}(p), 
u\frac{\partial f_1}{\partial {\bf y}}(p) - v\frac{\partial f_0}{\partial {\bf y}}(p)\big) \neq \underline{0}.
$$}
\end{proof}

\noindent{\rm \bf{Reduced and Strongly Regular Pairs and Application to Sign Distributions.}} {\rm The remaining part of this section is devoted to the Classification Theorem mentioned in (1.11) and hence to the solution of the Classification Problem (1.0)(b) for regular embedded blowups. We first will introduce two special types of regular pairs of polynomials.} 

\begin{prop defn}\label{Lemma and Definition BAP2} Let $B \in \mathfrak{Bl}_U^{\mathrm{reg}}(Z).$ Then, there 
is a regular pair $\underline{f} = (f_0, f_1) \in \mathbb{R}[{\bf x},{\bf y}]^2,$ with respect to $Z$ on $U,$  
 unique up to multiplication with a non-zero constant -- and called a \textit{reduced regular pair for $B$} -- 
such that
\begin{itemize}
\item[\rm{(a)}] $f_0$ and $f_1$ have no common divisor.
\item[\rm{(b)}] $\mathrm{Bl}_U(\underline{f}) = B.$
\item[\rm{(c)}] If $\underline{g} = (g_0,g_1) \in \mathbb{R}[{\bf x},{\bf y}]^2$ is a regular pair with respect to $Z$ on $U$ with $B = \mathrm{Bl}_U(\underline{g}),$ then there is a unique polynomial $h \in \mathbb{R}[{\bf x},{\bf y}]$ such that $\underline{g} = h\underline{f}.$ Moreover, in this situation
\begin{itemize}
\item[\rm{(1)}] $h(p) \neq 0$ for all $p \in U.$
\item[\rm{(2)}] $\mathrm{sgn}\big(\mathrm{det}(\partial \underline{g}(p))\big) = \mathrm{sgn}\big(\mathrm{det}(\partial \underline{f}(p))\big)$ for all $p \in Z.$
\end{itemize}
\end{itemize}
\end{prop defn}

\begin{proof} By our definition (1.9) of 
$\mathfrak{Bl}_U^{\mathrm{reg}}(Z)$ 
we may write $B = \mathrm{Bl}_U(\underline{g})$ where 
$(g_0,g_1) = \underline{g} \in \mathbb{R}[{\bf x},{\bf y}]^2$ 
is a regular pair with respect to $Z$ on $U$. Now, choose any such  pair $\underline{g}.$ Let 
$h \in \mathbb{R}[{\bf x},{\bf y}]$ be a greatest common divisor of $g_0$ and $g_1$ and let 
$\underline{f} = (f_0, f_1) \in \mathbb{R}[{\bf x},{\bf y}]^2$ be such that $\underline{g} = h\underline{f}.$ 
By Proposition~\ref{Proposition Equal} (a) we have $\mathrm{Bl}_U(\underline{f}) = 
\mathrm{Bl}_U(\underline{g}) = B.$ The Leibniz product rule for derivatives gives
\begin{itemize}
\item[\rm{(@)}] $\partial \underline{g} = \partial (h\underline{f}) = h \partial \underline{f} + \begin{pmatrix} f_0\frac{\partial h}{\partial {\bf x}} & f_1\frac{\partial h}{\partial {\bf x}} \\ {} & {} \\ f_0\frac{\partial h}{\partial {\bf y}} & f_1\frac{\partial h}{\partial {\bf y}} \end{pmatrix}.$
\end{itemize}
 
We claim that $h(p) \neq 0$ for all $p \in \overline{U}.$ If we assume to the contrary that $h(p) = 0$ 
for some $p \in \overline{U},$ it would follow -- by $\underline{g} = h \underline{f}$ -- that 
$p \in Z_{\overline{U}}(\underline{g}) = Z.$ But then by (@) the matrix $\partial \underline{g}(p)$ would be of rank at most 
$1,$ which contradicts the fact that $\underline{g}$ is regular with respect to $Z$ on $U.$ In 
particular we now get that $Z_{\overline{U}}(\underline{f}) = Z.$ 
Now, another use of (@) gives that for all $p \in Z$ we have $h(p)\big(\partial \underline{f}\big)(p) = 
\partial \underline{g}(p)$ and hence $h(p)^2\mathrm{det}\big(\partial \underline{f}\big) (p) = 
\mathrm{det}\big(\partial \underline{g}\big)(p) \neq 0,$ thus $\big(\partial \underline{f}\big)(p) \neq 0.$ 
This shows that $\underline{f}$ is a regular pair with respect to $Z$ on $U$ by definition.  
Finally, a further use of (@) shows that $\mathrm{sgn}\big(\mathrm{det}(\partial \underline{g}(p))\big) = 
\mathrm{sgn}\big(\mathrm{det}(\partial \underline{f}(p))\big)$ for all $p \in Z.$ As $f_0$ and 
$f_1$ have no common divisor, the stated uniqueness of the pair $\underline{f}$ follows by 
Proposition~\ref{Proposition Equal} (b). 
\end{proof}

\begin{defn rem}\label{Definition sgn} {\rm Let $B \in \mathfrak{Bl}_U^{\mathrm{reg}}(Z)$ and let $p \in Z.$ 
We write $B = \mathrm{Bl}_U(\underline{g}),$ where $\underline{g} \in \mathbb{R}[{\bf x},{\bf y}]^2$ is a regular 
pair with respect to $Z$ on $U.$ Then, by Lemma and Definition~\ref{Lemma and Definition BAP2} (c) (2) it is immediate, that 
$\mathrm{sgn}\big(\mathrm{det}(\partial \underline{g}(p))\big)$ depends only on the blowup $B$ and not on the 
chosen defining pair $\underline{g}.$  
This allows to define a map (see (1.10))}
 $$\mathrm{sgn}_B = \mathrm{sgn}_{\underline{g}}: Z \longrightarrow \{\pm 1 \} \mbox{ {\rm given by }} p \mapsto \mathrm{sgn}_B(p) :=\mathrm{sgn}\big(\mathrm{det}(\partial \underline{g}(p))\big) \mbox{ {\rm for all }} p \in Z.$$
{\rm We call this map the \textit{sign distribution} of $B.$}
\end{defn rem}
 
We now define a notion related to the complex affine plane. We like to do this, as in this way we get 
a stronger result (see Lemma~\ref{Lemma EI1}). 
 
\begin{defn rem}\label{Definition EI2} {\rm (A) Let $Z = \{p_i = (x_i,y_i) \mid i=1,2,\ldots,n\} \subset U, 
(p_i \neq p_j$ for all $i \neq j).$ A pair $\underline{f} = (f_0,f_1) \in \mathbb{R}[{\bf x},{\bf y}]^2$ 
is called \textit{strongly regular with respect to $Z$ (on $U$)}, if it satisfies the following equivalent 
requirements:}
 
\begin{itemize}
\item[\rm{(i)}] $\mathbb{C}[{\bf x},{\bf y}]f_0 + \mathbb{C}[{\bf x},{\bf y}]f_1 = 
\bigcap_{i=1}^n\big(\mathbb{C}[{\bf x},{\bf y}]({\bf x}-x_i) + \mathbb{C}[{\bf x},{\bf y}]({\bf y}-y_i)\big).$
\item[\rm{(ii)}] $\mathbb{C}[{\bf x},{\bf y}]f_0 + \mathbb{C}[{\bf x},{\bf y}]f_1 = 
I_{\mathbb{A}^2_{\mathbb{C}}}(Z) := \{f \in \mathbb{C}[{\bf x},{\bf y}] \mid f(p) = 0, \forall p \in Z\}.$
\end{itemize}
 
{\rm (B) Assume that $\underline{f}\in \mathbb{R}[{\bf x},{\bf y}]^2$ is a strongly regular pair with respect to 
$Z.$ Then, it is easy to see:}
 
\begin{itemize}
\item[\rm{(a)}] $\underline{f}$ {\rm is a regular pair with respect to $Z$ on $U$ in the sense of (1.14).}
\item[\rm{(b)}] $\underline{f}$ {\rm is a reduced regular pair for $B := \mathrm{Bl}_U(\underline{f})$ in the 
sense of Lemma and Definition~\ref{Lemma and Definition BAP2}.} 
\item[\rm{(c)}] $\mathbb{R}[{\bf x},{\bf y}]f_0 + \mathbb{R}[{\bf x},{\bf y}]f_1 = I_{\mathbb{A}^2_{\mathbb{R}}}(Z) := 
\{g \in \mathbb{R}[{\bf x},{\bf y}] \mid g(p) = 0, \forall p \in Z\}$.
\end{itemize}
\end{defn rem}

\begin{lem}\label{Lemma EI1} Let $n > 0,$ let $Z := \{p_1,p_2,\ldots,p_n\}$ a set of pairwise different points 
with $p_i := (x_i,y_i) \in U$ for $i=1,2,\ldots,n$. Let $\chi:Z \longrightarrow \mathbb{R}\setminus \{0\}$ be a 
map. Then, there is a strongly regular pair $ \underline{f} =  (f_0, f_1) \in \mathbb{R}[{\bf x},{\bf y}]^2$ 
with respect to $Z$ such that $\mathrm{det}\big(\partial \underline{f}(p)\big) = \chi(p)$ for all $p \in Z.$
\end{lem}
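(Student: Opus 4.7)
The plan is to build $\underline{f}$ explicitly via a linear shear followed by one-variable Lagrange interpolation.

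First, select $t \in \mathbb{R}$ such that the values $\sigma_i := x_i + t y_i$ are pairwise distinct (only finitely many $t$ need be excluded). Set $F(\mathbf{s}) := \prod_{i=1}^{n}(\mathbf{s} - \sigma_i) \in \mathbb{R}[\mathbf{s}]$ and $\gamma_i := \chi(p_i)/F'(\sigma_i) \in \mathbb{R}\setminus\{0\}$; by univariate Lagrange interpolation pick $Q, P \in \mathbb{R}[\mathbf{s}]$ of degree $\leq n-1$ with $Q(\sigma_i) = \gamma_i$ and $P(\sigma_i) = y_i \gamma_i$ for each $i$. Then define
\[
f_0(\mathbf{x},\mathbf{y}) := F(\mathbf{x} + t\mathbf{y}), \qquad f_1(\mathbf{x},\mathbf{y}) := \mathbf{y}\,Q(\mathbf{x} + t\mathbf{y}) - P(\mathbf{x} + t\mathbf{y}).
\]

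The verification has three routine parts and one key part. Routinely, $f_0(p_i) = F(\sigma_i) = 0$ and $f_1(p_i) = y_i\gamma_i - y_i\gamma_i = 0$; and a direct Jacobian computation (in which the cross-terms arising from the shear cancel in pairs) yields $\det \partial \underline{f}(p_i) = F'(\sigma_i)\,Q(\sigma_i) = \gamma_i F'(\sigma_i) = \chi(p_i)$. The key part is that the common zero set of $f_0, f_1$ in $\mathbb{C}^2$ is exactly $Z$: the equation $f_0 = 0$ forces $\mathbf{x} + t\mathbf{y} = \sigma_i$ for some $i$, and on that line $f_1$ restricts to $\gamma_i(\mathbf{y} - y_i)$, whose unique zero is $p_i$. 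Moreover, the non-degenerate Jacobian at each $p_i$ forces $(f_0, f_1)$, after localization at the maximal ideal of $p_i$, to coincide with that maximal ideal (by Nakayama applied to the linear parts); combined with $V_{\mathbb{C}^2}(f_0, f_1) = Z$, this makes $(f_0, f_1)$ radical everywhere, so $(f_0, f_1) = I_{\mathbb{A}^2}(Z)$ by the Nullstellensatz. This is condition (ii) of Definition and Remark~\ref{Definition EI2}, i.e., strong regularity.

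The principal obstacle is meeting all $n$ prescribed Jacobian determinants independently while keeping the common complex zero set exactly $Z$. The shear $(\mathbf{x},\mathbf{y}) \mapsto (\mathbf{x} + t\mathbf{y}, \mathbf{y})$ is the essential trick: the factor $F$ pins down the $n$ parallel ``level lines'' containing $Z$, while the Lagrange polynomials $P, Q$ simultaneously locate the $\mathbf{y}$-coordinate on each line and prescribe the Jacobian scaling at the corresponding point, all through a single pair of one-dimensional interpolations.
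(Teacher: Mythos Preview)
Your proof is correct and follows essentially the same strategy as the paper's: choose a linear projection separating the points, take $f_0$ to be the product of the corresponding linear forms, and build $f_1$ by Lagrange interpolation so that it is linear in the transverse variable with the prescribed zeros and Jacobian scalings. The only cosmetic differences are that the paper performs an outright coordinate change (rather than composing with a shear in place) and writes $f_1 = h(\mathbf{x})(\mathbf{y} - g(\mathbf{x}))$ in factored form, but the underlying idea and the verification of strong regularity are the same.
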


\begin{proof} We shall give the proof under the assumption that 
$x_i \neq x_j$ for all $i,j \in \{1,2,\ldots,n\}$ with $i \neq j.$ Should this requirement not be satisfied, 
we first subject $\mathbb{R}^2$ to a general transformation $T \in \mathrm{SL}_2(\mathbb{R}),$ such that our 
requirement is satisfied -- and keep in mind that this does not affect our Jacobian determinants. Then, we perform our 
proof as below and finally apply the transformation $T^{-1}.$        
\noindent{\rm We set} 
$$f_0 := \prod_{i=1}^n ({\bf x} - x_i) \in \mathbb{R}[{\bf x}] \mbox{ {\rm and} } f_1 := 
h({\bf x})\big({\bf y} - g({\bf x})\big),$$ 
{\rm where $g({\bf x}), h({\bf x}) \in \mathbb{R}[{\bf x}]$ are the uniquely determined polynomials of degree 
$\leq  n-1$ which respectively satisfy}
$$g(x_i) = y_i \mbox{ {\rm and} } h(x_i) = \frac{\chi(p_i)}
{\prod_{j\in\{1, \ldots,n\}\setminus\{i\}}(x_i-x_j)}
\mbox{ {\rm for all} } i=1,2,\ldots,n.$$
Observe also, that 
$$\frac{\partial f_0}{\partial{\bf x}}(p_i) = \prod _{j\in\{1, \ldots,n\}\setminus\{i\}}(x_i-x_j)\;  \mbox{ {\rm and} } \frac{\partial f_1}{\partial {\bf y}}(p_i) = h(x_i) \mbox{ {\rm for all} } i = 1,2,\ldots,n.$$ 
{\rm Now, for all $i=1,2,\ldots,n$ we obtain:}
\begin{align*}
 \mathrm{det}\big(\partial \underline{f}(p_i)\big) &= \mathrm{det}\begin{pmatrix}\frac{\partial f_0}{\partial{\bf x}}(p_i) & \frac{\partial f_1}{\partial{\bf x}}(p_i)\\ {} & {} \\                                      
                               \frac{\partial f_0}{\partial{\bf y}}(p_i) & \frac{\partial f_1}{\partial{\bf y}}(p_i)\end{pmatrix} = 
                               \\&{}\\
                           &= \mathrm{det}\begin{pmatrix}\prod _{j\neq i}(x_i-x_j) & \frac{\partial \big(h({\bf x})({\bf y} - g({\bf x}))\big)}{\partial{\bf x}}(p_i) \\ {} & {} \\                               
                               0  & h(x_i)\end{pmatrix}=\chi(p_i). 
\end{align*}
{\rm  Therefore $\mathrm{det}\big(\partial\underline{f}(p_i)\big) = \chi(p_i)$ for all 
$i=1,2,\ldots,n.$ \\
It is immediate to see, that $Z =\{p_1,p_2,\ldots,p_n\}$ is precisely the set
$Z_{\mathbb{C}^2}(\underline{f})$ of common zeros of the two polynomials 
$f_0, f_1 \in \mathbb{C}[{\bf x},{\bf y}]$ in $\mathbb{C}^2.$ 
As $\mathrm{det}\big(\partial \underline{f}(p_i)\big) = \chi(p_i) \neq 0$ for all $i \in \{1,2\ldots,n\}$ it follows
by the Jacobian Criterion, that $\mathbb{C}[{\bf x},{\bf y}]f_0 + \mathbb{C}[{\bf x},{\bf y}]f_1$ is 
reduced and hence is the vanishing ideal $I_{\mathbb{A}^2_{\mathbb{C}}}(Z)$ of $Z$ in $\mathbb{C}[{\bf x},{\bf y}].$
So $\underline{f}$ is strongly regular with respect to $Z$ on $U.$}
\end{proof}

\noindent{\rm \bf{The Classification Result.}} Now we will establish the Isomorphy Criterion we are heading 
for in this section, and hence solve the Classification Problem mentioned under (1.0) (b). We first shall prove 
two auxiliary results  whose proofs are straight forward. As both of them are crucial for the proof 
of our Classification Theorem, we include their proofs for the reader's convenience.

\begin{lemma}\label{Lemma EI2}  Let $\underline{f} = (f_0,f_1), 
\underline{g} = (g_0,g_1) \in \mathbb{R}[{\bf x},{\bf y}]^2$ be two pairs such that 
$\mathrm{Z}_U(\underline{f}) = \mathrm{Z}_U(\underline{g}) =Z. $  Assume that there 
exists a matrix $N \in \mathbb{R}[{\bf x},{\bf y}]^{2\times 2}$ such that $\underline{g} = \underline{f}N.$ 
Moreover, for each $\gamma \in \mathbb{R}[{\bf x},{\bf y}]$ we set 
$$N_\gamma := N + \gamma\begin{pmatrix}g_1 f_1 & -g_0 f_1\\ -g_1 f_0 & g_0 f_0\end{pmatrix}.$$
Then, it holds
\begin{itemize}
\item[\rm{(a)}] $N_\gamma (p) = N(p)$ for all $p \in Z.$
\item[\rm{(b)}] $\underline{g} = \underline{f}N_\gamma.$
\item[\rm{(c)}] $\mathrm{det}(N_\gamma) = \mathrm{det}(N) + \gamma\big(g_0^2 + g_1^2\big).$
\item[\rm{(d)}] If $\mathrm{det}\big(N(p)\big) > 0$ for all $p \in Z,$ then, there is some 
$b \in \mathbb{R}_{>0}$ such that  $\mathrm{det}\big(N_\gamma(p)\big) > 0$ for all $p \in U$ and all 
$\gamma \in \mathbb{R}[{\bf x},{\bf y}]$ with 
$\mathrm{inf}\{\gamma(p)  \mid p \in U\} > b.$ 
\end{itemize} 
\end{lemma}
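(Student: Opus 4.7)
The plan is to dispose of (a), (b), (c) by direct calculations and to reserve the main effort for (d), which will use continuity and a compactness argument on $\overline{U}$.

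For (a), every entry of the correction matrix
\[
E := \begin{pmatrix} g_1 f_1 & -g_0 f_1 \\ -g_1 f_0 & g_0 f_0 \end{pmatrix}
\]
is divisible by either $f_0$ or $f_1$; since both vanish at any $p \in Z$, we get $E(p) = 0$, and hence $N_\gamma(p) = N(p)$. For (b), I would expand $\underline{f}\,E$ and observe
\[
(f_0, f_1)\,E = (f_0 g_1 f_1 - f_1 g_1 f_0,\; -f_0 g_0 f_1 + f_1 g_0 f_0) = (0,0),
\]
so that $\underline{f} N_\gamma = \underline{f} N + \gamma\,\underline{f}\,E = \underline{g}$. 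For (c), expanding $\mathrm{det}(N_\gamma)$ directly produces a quadratic in $\gamma$; the $\gamma^2$-coefficient is $\mathrm{det}(E)$, which vanishes because $E = (f_1,-f_0)^T(g_1,-g_0)$ has rank at most one. Writing $N$ with rows $(a,b)$ and $(c,d)$, the coefficient of $\gamma$ works out to
\[
g_0(af_0 + cf_1) + g_1(bf_0 + df_1),
\]
and the hypothesis $\underline{g} = \underline{f} N$ states precisely that $af_0 + cf_1 = g_0$ and $bf_0 + df_1 = g_1$, so this collapses to $g_0^2 + g_1^2$.

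For (d), set $\varphi := \mathrm{det}(N)$ and $\psi := g_0^2 + g_1^2$; by (c) one needs to produce $b > 0$ such that $\varphi + \gamma\psi > 0$ on $U$ whenever $\inf_U \gamma > b$. Since $\varphi > 0$ on the finite set $Z$, continuity provides an open neighborhood $V \subseteq \overline{U}$ of $Z$ on which $\varphi > 0$. The set $\overline{U}\setminus V$ is compact and disjoint from $Z$, so the continuous function $\psi$ is strictly positive there and attains a minimum $c > 0$; meanwhile $\varphi$, being continuous on this compact set, is bounded below by some $M \in \mathbb{R}$. Then any $b > \max\{0,\, -M/c\}$ works: for $\gamma$ with $\inf_U \gamma > b$ one has $\varphi + \gamma\psi \geq \varphi > 0$ on $V \cap U$ (since $\gamma\psi \geq 0$), and $\varphi + \gamma\psi > M + bc > 0$ on $U \setminus V$.

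The main obstacle lies in (d), in the interplay between two regimes: near $Z$ the correction $\gamma\psi$ degenerates to zero and cannot by itself ensure positivity, while far from $Z$ it is $\varphi$ that may be negative. The natural fix is exactly the split above: cover $Z$ by a continuity-neighborhood where $\varphi$ alone already suffices, then use compactness of the complement to exploit the uniform positivity of $\psi$ there.
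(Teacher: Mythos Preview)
Your proof is correct and follows essentially the same strategy as the paper's. Parts (a), (b), (d) are handled identically: for (d) both you and the authors split $U$ into a neighborhood of $Z$ where $\det(N)$ is already positive and a complementary region where compactness yields a uniform lower bound $c>0$ for $g_0^2+g_1^2$ and a lower bound for $\det(N)$. Your treatment of (c) is slightly slicker than the paper's: you observe that the correction matrix factors as $(f_1,-f_0)^{T}(g_1,-g_0)$ and hence has rank at most one, killing the $\gamma^2$-term immediately, whereas the paper expands $\det(N_\gamma)$ via column bilinearity into four $2\times 2$ determinants and simplifies each. One small point worth making explicit (the paper does so in passing): the claim that $\psi=g_0^2+g_1^2$ is strictly positive on $\overline{U}\setminus V$ uses that $\underline{g}$ has no zeros on the boundary $\partial U$, which is part of the standing hypothesis $Z_{\overline{U}}(\underline{g})=Z$ from (1.1) rather than the weaker $Z_U(\underline{g})=Z$ stated in the lemma.
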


\begin{proof} {\rm Statements (a) and (b) are immediate. To prove statement (c) we write} 
$$N = \begin{pmatrix} N_{11} & N_{12}\\ N_{21} & N_{22}\end{pmatrix}$$
{\rm On use of the column bi-linearity of the determinant and as} 
$$\mathrm{det}\begin{pmatrix} f_1 & N_{12}\\ -f_0 & N_{22}\end{pmatrix} = g_1 \mbox{ {\rm and} }  \mathrm{det}\begin{pmatrix}N_{11} & -f_1\\ N_{21} & f_0 \end{pmatrix} = g_0,$$
{\rm we get indeed}
\begin{align*} 
\mathrm{det}(N_\gamma) &= \mathrm{det}\big(N+ \gamma \begin{pmatrix}g_1 f_1 &-g_0 f_1\\ -g_1 f_0 &  g_0 f_0\end{pmatrix}\big) 
=\mathrm{det}\begin{pmatrix} N_{11} + \gamma g_1 f_1 & N_{12}-\gamma g_0 f_1 \\ N_{21}-\gamma g_1 f_0  & N_{22}+\gamma g_0 f_0\end{pmatrix} = \\
&= \mathrm{det}\begin{pmatrix}N_{11}&N_{12}\\N_{21}&N_{22}\end{pmatrix}+\mathrm{det}\begin{pmatrix}\gamma g_1 f_1 & N_{12}\\-\gamma g_1 f_0 &N_{22}\end{pmatrix} +\\ 
&+ \mathrm{det}\begin{pmatrix}N_{11}& -\gamma g_0 f_1\\ N_{21} & \gamma g_0 f_0\end{pmatrix} + \mathrm{det}\begin{pmatrix}\gamma g_1 f_1 & -\gamma g_0 f_1\\ - \gamma g_1 f_0 & \gamma g_0 f_0\end{pmatrix} = \\
&= \mathrm{det}(N) + \gamma g_1 \mathrm{det}\begin{pmatrix} f_1 & N_{12}\\ -f_0 & N_{22}\end{pmatrix} + \gamma g_0 \mathrm{det}\begin{pmatrix}N_{11} & -f_1\\ N_{21} & f_0 \end{pmatrix} + 0 = \\
&= \mathrm{det}(N) + \gamma g_1^2+\gamma g_0^2 = \mathrm{det}(N) + \gamma\big(g_0^2 + g_1^2\big). 
\end{align*} 
{\rm It remains to show statement (d). So, assume that $\mathrm{det}\big(N(p)\big) > 0$ for all $p \in Z.$ We have to show that there is some constant $b \in \mathbb{R}_{>0}$ such that $\mathrm{det}\big(N_\gamma(p)\big) > 0$ for all $p \in U$ and all constants $\gamma > b.$ As $\mathrm{det}\big(N(p)) > 0$ for all $p \in Z,$ there is some open set $W \subset U$ such that $Z \subset W$ and $\mathrm{det}\big(N(p)) > 0$ for all $p \in W.$ It follows by statement (a) and (c) that} 
$$\mathrm{det}\big(N_\gamma(p)) > 0 \mbox{ {\rm for all} } p \in W \mbox{ {\rm and all} } \gamma > 0. $$
{\rm As $U$ is bounded and $\mathrm{Z}_{\mathbb{R}^2}(\underline{g})$ does not contain any points of the boundary of $U$ it follows that there is some $c > 0$ such that $g_0(p)^2 + g_1(p)^2 > c$ for all $p \in U \setminus W.$ As $U$ is bounded, there is some $C > 0$ such that $\mathrm{det}\big(N(p)\big) \geq -C$ for all $p\in U$. If $\gamma > b:= \frac{C}{c}$ it follows that} 
$$\mathrm{det}\big(N_{\gamma}(p)\big) \geq \mathrm{det}\big(N(p)\big) + b\big(g_0(p)^2 + g_1(p)^2\big) > 0 \mbox{ {\rm for all} } p \in U \setminus W,$$
{\rm and hence $\mathrm{det}\big(N_{\gamma}(p)\big) > 0$ for all $p \in U.$ } 
\end{proof}

\begin{lemma}\label{Proposition EI1} Let $\underline{f} =(f_0,f_1), \underline{g} = (g_0,g_1)\in \mathbb{R}[{\bf x},{\bf y}]^2$ be two pairs of polynomials such that $\underline{f}$ is strongly regular with respect $Z$ and $\underline{g}$ is regular with respect to $Z$ on $U$ and consider the two blowups $B := \mathrm{Bl}_U(\underline{f}), C:= \mathrm{Bl}_U(\underline{g}) \in \mathfrak{Bl}_U^{\mathrm{reg}}(Z).$ Then, the following statements are equivalent:
\begin{itemize}
\item[\rm{(i)}] $\mathrm{sgn}_C = \mathrm{sgn}_{B}.$ 
\item[\rm{(ii)}] There is a matrix  $M \in \mathbb{R}[{\bf x},{\bf y}]^{2 \times 2}$ such that $\mathrm{det}\big(M(p)\big) > 0$ for all $p \in U$ and $\underline{g} = \underline{f}M.$ 
\end{itemize}
\end{lemma}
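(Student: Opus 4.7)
The plan is to handle the two implications separately, using the chain rule together with Lemma~\ref{Lemma EI2} as the technical engine.

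\emph{Direction (ii) $\Rightarrow$ (i).} This follows from a straightforward chain-rule computation at points of $Z$. If $\underline{g} = \underline{f}M$, the Leibniz product rule yields
\[
\partial\underline{g} \;=\; \partial\underline{f}\cdot M \;+\; (\text{terms proportional to } f_0 \text{ and } f_1),
\]
so evaluation at any $p \in Z$, where $\underline{f}(p) = 0$, gives $\partial\underline{g}(p) = \partial\underline{f}(p)\cdot M(p)$. Taking determinants and using $\det(M(p)) > 0$ (valid at $p \in Z \subset U$) gives $\mathrm{sgn}_C(p) = \mathrm{sgn}_B(p)$ for every $p \in Z$.

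\emph{Direction (i) $\Rightarrow$ (ii).} I would first produce a preliminary matrix $N$ with $\underline{g} = \underline{f}N$. By Definition and Remark~\ref{Definition EI2}(B)(c), the strong regularity of $\underline{f}$ implies $\mathbb{R}[\mathbf{x},\mathbf{y}]f_0 + \mathbb{R}[\mathbf{x},\mathbf{y}]f_1 = I_{\mathbb{R}^2}(Z)$. Since $g_0$ and $g_1$ vanish on $Z$, they lie in this ideal, and decomposing each gives $N \in \mathbb{R}[\mathbf{x},\mathbf{y}]^{2\times 2}$ with $\underline{g} = \underline{f}N$. The same chain-rule argument as in the previous paragraph, applied at each $p \in Z$, yields $\partial\underline{g}(p) = \partial\underline{f}(p)\cdot N(p)$. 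Since both $\underline{f}$ and $\underline{g}$ are regular at $p$, the two Jacobians are invertible, and the hypothesis $\mathrm{sgn}_B = \mathrm{sgn}_C$ forces $\det\bigl(N(p)\bigr) > 0$ for all $p \in Z$.

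The principal obstacle is that this only gives positivity of the determinant on the finite set $Z$, whereas condition (ii) demands it on all of $U$. This is precisely the scenario addressed by Lemma~\ref{Lemma EI2}: applying it to the matrix $N$ just constructed and choosing a sufficiently large real constant $\gamma > b$, the matrix $M := N_{\gamma}$ satisfies $\underline{g} = \underline{f}M$ by part (b) of that lemma, agrees with $N$ on $Z$ by part (a), and satisfies $\det\bigl(M(p)\bigr) > 0$ for all $p \in U$ by part (d). This $M$ is the matrix required by (ii), completing the proof.
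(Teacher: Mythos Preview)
Your proof is correct and follows essentially the same approach as the paper: both directions use the Leibniz product rule at points of $Z$ to relate the Jacobians, and for (i) $\Rightarrow$ (ii) both obtain a preliminary matrix $N$ from strong regularity via Definition and Remark~\ref{Definition EI2}(B)(c), then invoke Lemma~\ref{Lemma EI2} to correct $N$ to an $M$ with positive determinant on all of $U$. Your write-up is in fact slightly more explicit than the paper's, which dismisses (ii) $\Rightarrow$ (i) as ``immediate.''
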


\begin{proof} {\rm (i) $\Rightarrow$ (ii): Assume that statement (i) holds. As $g_0, g_1 \in I_{\mathbb{A}^2_{\mathbb{R}}}(Z),$ it follows by Definition and Remark~\ref{Definition EI2}(B)(c), that there is a matrix}
$$ N = \begin{pmatrix} N_{11} & N_{12} \\ N_{21} & N_{22} \end{pmatrix} = \begin{pmatrix} N_{1 \bullet}\\N_{2 \bullet}\end{pmatrix} \in \mathbb{R}[{\bf x},{\bf y}]^{2\times 2} \mbox{ {\rm with} } \underline{g} = \underline{f}N.$$
{\rm By our assumption we have $\mathrm{sgn}\big(\mathrm{det}(\partial \underline{g}(p))\big) = \mathrm{sgn}_C(p) = \mathrm{sgn}_B(p) = \mathrm{sgn}\big(\mathrm{det}(\partial \underline{f}(p))\big)$ for all $p \in Z.$ Moreover, by the Leibniz product rule for derivatives we have}
\begin{itemize}
\item[\rm{(@@)}] $\quad \quad \quad \quad  \partial \underline{g} = \partial(\underline{f}N) = \partial\underline{f} \cdot N + f_0\cdot \partial N_{1 \bullet} + f_1 \cdot \partial N_{2 \bullet}.$
\end{itemize}
{\rm As $\underline{f}(Z) = 0$ it follows that $\mathrm{det}(\partial \underline{g}(p)) = \mathrm{det}(\partial \underline{f}(p)) \cdot \mathrm{det}\big(N(p)\big)$ and hence $\mathrm{det}\big(N(p)\big) > 0$ for all $p \in Z.$
Now, by Lemma~\ref{Lemma EI2} (c), there is some $\gamma \in \mathbb{R}_{>0}$ such that the matrix $M : = N_\gamma \in \mathbb{R}[{\bf x},{\bf y}]^{2 \times 2}$ satisfies $\mathrm{det}\big(M(p)\big) > 0$ for all $p \in U.$ Moreover, Lemma~\ref{Lemma EI2} (b) yields that $\underline{g} = \underline{f}M.$}\\
{\rm (ii) $\Rightarrow$ (i):  Assume that statement (ii) holds. The 
Leibniz product rule for derivatives (see formula (@@) above) and the fact that $f_0(p) = f_1(p) = 0$ for all $p \in Z$ give} 
$$
\partial \underline{g}(p) = \partial(\underline{f}M)(p) = \partial\underline{f} (p)\cdot M(p) 
\mbox{ for all } p \in Z.
$$
{\rm Taking determinants and observing that $\mathrm{det}\big(M(p)\big) > 0$ for all $p 
\in Z$ we get statement (i). }
\end{proof}

{\rm Now, we are ready to prove the main result of this section (cf. (1.11)).}

\begin{thm}\label{Theorem EI1} (Classification of Regular Embedded Blowups)
\begin{itemize}
\item[\rm{(a)}]  For each function $\sigma:Z \longrightarrow \{+1,-1\}$ there is a regular embedded blowup 
$B \in \mathfrak{Bl}_U^{\mathrm{reg}}(Z)$ such that $\mathrm{sgn}_B = \sigma.$
\item[\rm{(b)}] Let $B, C \in \mathfrak{Bl}_U^{\mathrm{reg}}(Z).$ Then $B$ and $C$ are relatively oriented embedded
isomorphic if and only if they have the same sign distribution. Hence, for short:  $B \cong C $ if and only if 
$\mathrm{sgn}_B = \mathrm{sgn}_C.$ 
\item[\rm{(c)}] There are precisely $2^{\#Z}$ isomorphism types of regular embedded blowups of $U$ along $Z.$ 
\end{itemize} 
\end{thm}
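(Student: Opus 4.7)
The plan is to verify the three assertions in order, relying on the realization Lemma~\ref{Lemma EI1} and the isomorphism criterion Lemma~\ref{Proposition EI1} as the main tools; once these are in hand, Theorem~\ref{Theorem EI1} is essentially a packaging result.

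For part (a), given $\sigma : Z \to \{\pm 1\}$, I would apply Lemma~\ref{Lemma EI1} with $\chi := \sigma$ (viewed as a non-vanishing real function on $Z$). This produces a strongly regular pair $\underline{f}\in \mathbb{R}[{\bf x},{\bf y}]^2$ with $\det(\partial \underline{f}(p)) = \sigma(p)$ for every $p\in Z$. In particular, $\underline{f}$ is regular with respect to $Z$ on $U$ (by Definition and Remark~\ref{Definition EI2}(B)(a)), so $B := \mathrm{Bl}_U(\underline{f})$ lies in $\mathfrak{Bl}_U^{\mathrm{reg}}(Z)$ and $\mathrm{sgn}_B = \sigma$ by Definition and Remark~\ref{Definition sgn}.

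For part (b), the forward direction is a direct Jacobian computation. If $B \cong C$ and $B = \mathrm{Bl}_U(\underline{f})$, $C = \mathrm{Bl}_U(\underline{g})$ with $\underline{f},\underline{g}$ regular with respect to $Z$ on $U$, then by (1.15) there exists $M \in \mathbb{R}[{\bf x},{\bf y}]^{2\times 2}$ with $\det(M(p))>0$ for all $p\in U$ such that $\underline{g} = \underline{f}M$. Applying the Leibniz product rule (as in formula (@@) of the proof of Lemma~\ref{Proposition EI1}) and using $\underline{f}(p)=0$ for $p\in Z$, one obtains $\det(\partial \underline{g}(p)) = \det(\partial \underline{f}(p))\cdot \det(M(p))$ at each $p\in Z$, so the sign distributions coincide. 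For the converse, the idea is to funnel both $B$ and $C$ through a common reference blowup defined by a strongly regular pair. By part (a), choose a strongly regular pair $\underline{f}_0$ so that $B_0 := \mathrm{Bl}_U(\underline{f}_0)$ has $\mathrm{sgn}_{B_0} = \mathrm{sgn}_B = \mathrm{sgn}_C$. Picking any regular defining pairs $\underline{f}_B, \underline{f}_C$ for $B$ and $C$, Lemma~\ref{Proposition EI1} applied to the pairs $(\underline{f}_0,\underline{f}_B)$ and $(\underline{f}_0,\underline{f}_C)$ supplies matrices $M,M'\in \mathbb{R}[{\bf x},{\bf y}]^{2\times 2}$ with $\det(M(p)),\det(M'(p))>0$ on $U$ satisfying $\underline{f}_B = \underline{f}_0 M$ and $\underline{f}_C = \underline{f}_0 M'$. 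By (1.15) this gives $B \cong B_0 \cong C$.

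Part (c) then follows by counting: the set of maps $\sigma: Z \to \{\pm 1\}$ has cardinality $2^{\#Z}$, each such $\sigma$ is realized by some blowup in $\mathfrak{Bl}_U^{\mathrm{reg}}(Z)$ thanks to (a), and two regular blowups are isomorphic if and only if they carry the same sign distribution by (b), so there are exactly $2^{\#Z}$ isomorphism classes. The only conceptually delicate step is the reverse implication in (b), and its real content has already been absorbed into the determinant-correction trick of Lemma~\ref{Lemma EI2} that underlies Lemma~\ref{Proposition EI1}; the theorem itself is then a clean bookkeeping consequence.
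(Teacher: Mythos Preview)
Your proof is correct and follows essentially the same route as the paper: part (a) via Lemma~\ref{Lemma EI1}, the forward direction of (b) via the Leibniz rule at points of $Z$, the converse of (b) by passing through a strongly regular reference pair and invoking Lemma~\ref{Proposition EI1} twice, and (c) as a direct count. One small imprecision: in the forward direction of (b), (1.15) only gives $C = \mathrm{Bl}_U(\underline{f}M)$, not $\underline{g} = \underline{f}M$; but since $\mathrm{sgn}_C$ is independent of the chosen defining pair (Definition and Remark~\ref{Definition sgn}), you may simply compute $\mathrm{sgn}_C$ from $\underline{f}M$ and the argument goes through unchanged --- this is exactly what the paper does.
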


\begin{proof} (a): By Lemma~\ref{Lemma EI1} there is a strongly regular pair $\underline{f} \in \mathbb{R}[{\bf x},{\bf y}]^2$ with respect to $Z$  
such that $\mathrm{det}\big(\partial \underline{f}(p)\big) = \sigma(p)$ for all $p \in Z.$ It suffices to choose $B = \mathrm{Bl}_U(\underline{f}).$

(b): We may write $B = \mathrm{Bl}_U(\underline{g}),$ where $\underline{g} \in \mathbb{R}[{\bf x},{\bf y}]$ is a regular pair of polynomials with respect to $Z$ on $U.$ \\
Assume first that $B$ and $C$ are oriented embedded isomorphic, more precisely, that $C = \varphi_M(B)$ for some automorphism $\varphi_M: U \times \mathbb{P}^1 \longrightarrow  U \times \mathbb{P}^1$ with $M \in \mathbb{R}[{\bf x},{\bf y}]^{2 \times 2}$ and $\mathrm{det}\big(M(p)\big) > 0$ for all $p \in U.$ Then we may write $C= \mathrm{Bl}_U(\underline{g}M).$ By the product rule for derivatives (see (@@), Proof of Lemma~\ref{Proposition EI1}), as $\underline{g}(Z) = 0$ and as $\mathrm{det}(M(p)) > 0$ for all $p \in U,$ we now obtain 
\begin{align*}
\mathrm{sgn}_C(p) &= \mathrm{sgn}\big(\mathrm{det}[\partial(\underline{g}M)(p)]\big) = \mathrm{sgn}\big(\mathrm{det}[(\partial\underline{g})(p)M(p)]\big) = \\ 
                  &= \mathrm{sgn}\big(\mathrm{det}[\partial\underline{g}(p)]\mathrm{det}[M(p)]\big) = \mathrm{sgn}\big(\mathrm{det}[\partial\underline{g}(p)]\big) = \\
                  &= \mathrm{sgn}_B(p) \mbox{ {\rm for all} } p \in Z.
\end{align*} 
 It follows that indeed $\mathrm{sgn}_C = \mathrm{sgn}_B.$ \\
Assume conversely, that $\mathrm{sgn}_C = \mathrm{sgn}_B.$ By Lemma~\ref{Lemma EI1} there is a strongly regular pair $\underline{f} \in \mathbb{R}[{\bf x},{\bf y}]^2$ with respect to $Z$ on $U$ such that $\mathrm{det}\big(\partial \underline{f}(p)\big) =  \mathrm{sgn}_B(p) = \mathrm{sgn}_C(p) \mbox{ {\rm for all} } p\in Z.$\\ 
By Lemma~\ref{Proposition EI1} there is a matrix $M \in \mathbb{R}[{\bf x},{\bf y}]^{2 \times 2}$ such that $\mathrm{det}\big(M(p)\big) > 0$ for all $p \in U$ and $\underline{g} = \underline{f}M.$  But this means, that $D := \mathrm{Bl}_U(\underline{f}) \cong B$. Similarly we see, that $D \cong C$. So $B$ and $C$ are embedded isomorphic.

(c):  This is clear by statements (a) and (b).
\end{proof}

\begin{rem}\label{Problem EI1} {\rm  A (more complicated) proof of the 
Classification Theorem~\ref{Theorem EI1} -- based on the ideas of the first named author -- has been worked out in the Master thesis of S. Koller \cite{Kol}, but remained unpublished yet. }
\end{rem}

\section{Deformation of Matrices and Isotopies of Embedded Blowups}

\noindent {\rm \bf{Analytic Matrix Deformations.}}  In this section, we approach the Deformation Problem (1.0)(a) mentioned in the introduction. We shall prove the Deformation Result (1.13). As already mentioned in the introduction, this means that we have to prove the result on polynomial deformations of matrices mentioned in (1.14).
We first prove a result on real analytic deformation of matrices.

\begin{nota rem}\label{Notation Remark CF1} {\rm (A) Let $\mathcal{C}^\omega(U)$ denote the ring of real analytic 
functions on $U$. We choose a matrix}  
$$M = (M_{\bullet 1} \; M_{\bullet 2}) = \begin{pmatrix}M_{11} & M_{12}\\M_{21} & M_{22}\end{pmatrix} \in 
\mathcal{C}^\omega(U)^{2 \times 2} \mbox{ {\rm with} } \mathrm{det}\big(M(p)\big) > 0 \mbox{ {\rm for all} } p 
\in U,$$
{\rm where} 
$$M_{\bullet 1} := \begin{pmatrix}M_{11} \\ M_{21}\end{pmatrix} \mbox{ {\rm and} } M_{\bullet 2} := 
\begin{pmatrix}M_{12} \\ M_{22}\end{pmatrix}$$
{\rm denote the column vectors of $M.$}

{\rm   Let $p,q \in U$ and let $\sigma:[0,1] \longrightarrow U$ be a smooth path with $\sigma(0) = p$ 
and $\sigma(1) = q.$ As $U$ is pathwise simply connected -- by monodromy -- the two values below (which are the total angles the vectors $\frac{M_{\bullet 1}}{\|M_{\bullet 1}\|}\big(\sigma(t)\big)$ and 
$\frac{M_{\bullet 2}}{\|M_{\bullet 2}\|}\big(\sigma(t)\big)$ respectively wander through if $t$ runs from $0$ to $1$)}
\begin{align*}
\alpha_M(p,q) &= \int_0^1 \frac{M_{\bullet 1}}{\|M_{\bullet 1}\|}\big(\sigma(t)\big) \wedge \frac{d}{dt}\big[\frac{M_{\bullet 1}}{\|M_{\bullet 1}\|}\big(\sigma(t)\big)\big] dt, \\
\beta_M(p,q) &= \int_0^1 \frac{M_{\bullet 2}}{\|M_{\bullet 2}\|}\big(\sigma(t)\big) \wedge \frac{d}{dt}\big[\frac{M_{\bullet 2}}{\|M_{\bullet 2}\|}\big(\sigma(t)\big)\big] dt
\end{align*}
{\rm  depend only (analytically) on $p$ and $q$ and not on their connecting path $\sigma.$ Now, we fix a point $p_0 \in U.$ Then, there are uniquely determined functions
$\alpha_M, \beta_M: U \longrightarrow \mathbb{R}$ such that }
$$ 0 \leq \alpha_M(p_0),  \beta_M(p_0) \leq 2\pi,$$
$$ \alpha_M(p) =  \alpha_M(p_0) + \alpha_M(p_0,p) \mbox{ {\rm and} } \beta_M(p) =  \beta_M(p_0) + \beta_M(p_0,p) 
\mbox{ {\rm for all} } p \in U$$  
{\rm and}
$$ M_{\bullet 1}(p) = \|M_{\bullet 1}(p)\|\begin{pmatrix}\mathrm{cos}(\alpha_M(p))\\ \mathrm{sin}(\alpha_M(p))\end{pmatrix}, \;
M_{\bullet 2}(p) = \|M_{\bullet 2}(p)\|\begin{pmatrix}\mathrm{cos}(\beta_M(p))\\ \mathrm{sin}(\beta_M(p)) \end{pmatrix},\mbox{ {\rm for all} } 
p \in U.$$
{\rm  Observe, that in particular}
$$
\mathrm{det}\big(M(p)\big) = \|M_{\bullet 1}(p)\|\cdot\|M_{\bullet 2}(p)\|\cdot\mathrm{sin}\big(\beta_M(p)-\alpha_M(p)\big) > 0 \mbox{ {\rm for all} }p \in U.$$
{\rm Now, by continuity, and as $\alpha_M(p,q)$ and $\beta_M(p,q)$ depend analytically on $p$ and $q,$ it follows that}
\begin{itemize}
\item[\rm{(a)}] {\rm $ 0 < \beta_M(p)-\alpha_M(p) < \pi$ for all  $p \in U$;}
\item[\rm{(b)}] {\rm $\alpha_M, \beta_M \in \mathcal{C}^\omega(U)$.}
\end{itemize}

{\rm (B) Keep the notations and hypotheses of part (A). For each $t \in [0,1]$ and each $p \in U$ we set}
\begin{align*}
M^{(t)}_{11}(p) &:= \big[(1-t) + t\|M_{\bullet 1}\|\big] \cdot \mathrm{cos}\big(t\alpha_M(p)\big),\\
M^{(t)}_{21}(p) &:= \big[(1-t) + t\|M_{\bullet 1}\|\big] \cdot \mathrm{sin}\big(t\alpha_M(p)\big),\\
M^{(t)}_{12}(p) &:= \big[(1-t) + t\|M_{\bullet 2}\|\big] \cdot \mathrm{cos}\big((1-t)\frac{\pi}{2} + t\beta_M(p)\big),\\
M^{(t)}_{22}(p) &:= \big[(1-t) + t\|M_{\bullet 2}\|\big] \cdot \mathrm{sin}\big((1-t)\frac{\pi}{2} + t\beta_M(p)\big),\\
\end{align*}
{\rm and consider the matrices}
$$M^{(t)} := \begin{pmatrix}M^{(t)}_{11} & M^{(t)}_{12}\\ M^{(t)}_{21} & M^{(t)}_{22} \end{pmatrix} \in 
\mathcal{C}(U)^{2\times 2}, \; \big(t \in [0,1]\big).$$
{\rm For all $t \in [0,1]$ and all $p \in U$ we obtain:}
\begin{align*}
&\mathrm{det}\big(M^{(t)}(p)\big) =\\
&= \big[(1-t) + t\|M_{\bullet 1}(p)\|\big]\cdot\big[(1-t) + t\|M_{\bullet 2}(p)\|\big]\cdot\mathrm{sin}\big((1-t)\frac{\pi}{2} + t[\beta_M(p) - \alpha_M(p)]\big).
\end{align*}
{\rm Moreover, $0 < \beta_M(p) - \alpha_M(p) < \pi$ (see statement (a) of Part (A)) implies} 
$$0 < (1-t)\frac{\pi}{2} + t[\beta_M(p) - \alpha_M(p)] < (1-t)\frac{\pi}{2} + t\pi = \frac{\pi}{2} + t\frac{\pi}{2} \leq \pi.$$ 
{\rm So, in view of statement (b) of part (A) we can say:}
 
\begin{itemize}
\item[\rm{(a)}] {\rm $M^{(t)} \in \mathcal{C}^\omega(U)^{2 \times 2} \mbox{ {\rm and} } \mathrm{det}\big(M^{(t)}(p)\big) > 0 
\mbox{ {\rm for all} } t \in [0,1] \mbox{ {\rm and all} } p \in U .$}
\end{itemize}
 
\end{nota rem}

{\rm Now, we solve our deformation problem for matrices with analytic entries.}

\begin{prop}\label{Proposition CF 1} Let $M \in \mathcal{C}^\omega(U)^{2 \times 2}$ such that $\mathrm{det}\big(M(p)\big) > 0$ for all $p \in U.$ Then the family $\big({M}^{(t)}\big)_{0\leq t \leq 1}$ of Notation and Remark~\ref{Notation Remark CF1} is an analytic family of matrices in $\mathcal{C}^\omega(U)^{2 \times 2},$ with positive determinant on $U,$ which connects the unit matrix ${\bf 1}^{2\times 2}$ with the matrix $M.$ More precisely,
\begin{itemize}
\item[\rm{(a)}] $M^{(t)} \in \mathcal{C}^\omega(U)^{2 \times 2} \mbox{ {\rm and} } \mathrm{det}\big(M^{(t)}(p)\big) > 0 \mbox{ {\rm for all} } t \in [0,1] \mbox{ {\rm and all} } p \in U .$
\item[\rm{(b)}] $M^{(0)} = {\bf 1}^{2\times2}$ and $M^{(1)} = M.$
\item[\rm{(c)}] The map $\widetilde{M}:U\times[0,1] \longrightarrow \mathbb{R}^{2\times2},$ given by $(p,t) \mapsto M^{(t)}(p),$ is continuous and analytic on the open set $U \times ]0,1[.$
\end{itemize} 
\end{prop}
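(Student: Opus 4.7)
The strategy is to verify directly that the explicit family $\bigl(M^{(t)}\bigr)_{0\le t\le 1}$ constructed in Notation and Remark~\ref{Notation Remark CF1}(B) does what is asserted. Most of the heavy lifting is already done: in Notation and Remark~\ref{Notation Remark CF1}(A) the angle functions $\alpha_M,\beta_M\in\mathcal{C}^\omega(U)$ are exhibited, the bound $0<\beta_M(p)-\alpha_M(p)<\pi$ is established, and in part (B) the determinant of $M^{(t)}$ is computed in closed form. My proof simply organises these facts into the three bullets of the proposition.

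For (a) I would first argue analyticity of the entries of $M^{(t)}$: the norms $\|M_{\bullet i}\|=\sqrt{M_{1i}^2+M_{2i}^2}$ are analytic on $U$ because $M_{\bullet i}$ never vanishes there (the determinant is positive), the angles $\alpha_M,\beta_M$ are analytic by Notation and Remark~\ref{Notation Remark CF1}(A)(b), and the entries of $M^{(t)}$ are obtained from these by forming real-analytic expressions and composing with $\cos,\sin$. For the positivity of $\det M^{(t)}(p)$ I would quote the closed form already derived in the Remark, namely
\[
\det M^{(t)}(p)=\bigl[(1-t)+t\|M_{\bullet 1}(p)\|\bigr]\bigl[(1-t)+t\|M_{\bullet 2}(p)\|\bigr]\sin\!\bigl((1-t)\tfrac{\pi}{2}+t[\beta_M(p)-\alpha_M(p)]\bigr),
\]
and then observe that (i) each norm-factor is a convex combination of $1$ and the positive number $\|M_{\bullet i}(p)\|$, hence strictly positive, and (ii) the argument of the sine lies strictly between $0$ and $\pi$ by the bound $0<\beta_M(p)-\alpha_M(p)<\pi$ from Notation and Remark~\ref{Notation Remark CF1}(A)(a); hence the sine factor is positive as well.

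For (b) I would just substitute. At $t=0$ the length factors $[(1-t)+t\|M_{\bullet i}\|]$ collapse to $1$, the angle for the first column reduces to $0$ and for the second column to $\pi/2$, which together produce the standard basis vectors and hence $M^{(0)}=\mathbf{1}^{2\times 2}$. At $t=1$ the length factors become $\|M_{\bullet i}(p)\|$ and the angles become $\alpha_M(p)$ and $\beta_M(p)$, which by the polar representation of $M_{\bullet 1}$ and $M_{\bullet 2}$ in Notation and Remark~\ref{Notation Remark CF1}(A) give back exactly the columns of $M$.

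For (c), continuity of $\widetilde M$ on $U\times[0,1]$ is immediate since every building block $\alpha_M,\beta_M,\|M_{\bullet i}\|$ is continuous in $p$ and every $t$-dependent coefficient is polynomial in $t$, with $\cos$ and $\sin$ continuous on $\mathbb R$. For joint real-analyticity on $U\times\,]0,1[\,$ I would note that each entry of $\widetilde M(p,t)$ is a finite product of factors, each of which is a polynomial in $t$ with coefficients in $\mathcal{C}^\omega(U)$, composed (through the argument of $\cos$ or $\sin$) with a polynomial in $t$ whose coefficients lie in $\mathcal{C}^\omega(U)$. Such expressions are jointly real-analytic on all of $U\times\mathbb R$; restricting to $U\times\,]0,1[\,$ gives the claim. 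The one step that needs care is the very last one, the joint-analyticity assertion, because the identification ``polynomial in $t$ with analytic coefficients in $p$, composed with $\cos$ or $\sin$, yields a jointly analytic function of $(p,t)$'' rests on the fact that $\cos$ and $\sin$ are entire and that products of jointly analytic functions are jointly analytic; this is the only nontrivial technical point in the verification.
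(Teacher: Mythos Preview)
Your proof is correct and follows essentially the same approach as the paper: the paper's own proof simply points back to Notation and Remark~\ref{Notation Remark CF1}, where the determinant formula, the positivity bound $0<\beta_M-\alpha_M<\pi$, and the analyticity of $\alpha_M,\beta_M$ are already established, and you have just unpacked those references explicitly. Your more detailed treatment of the joint analyticity in (c) is a helpful addition, but the underlying argument is the same.
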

\begin{proof} (a): This is immediate by Notation and Remark~\ref{Notation Remark CF1} (B)(a).

(b): This is obvious by the definition of the Matrices $M^{(t)}$.

(c): This follows easily from the definition of the functions $p \mapsto M^{(t)}_{ij}(p)$ (see Notation and Remark~\ref{Notation Remark CF1} (B)) and statement
(b) of Notation and Remark~\ref{Notation Remark CF1} (A).
\end{proof}

\noindent {\rm \bf{Polynomial and Rational Matrix Deformations.}} {\rm We now attack the case of polynomial or rational matrix deformations. We begin with the following auxiliary result.}

\begin{lem}\label{Lemma RCF 1} Let $K \subset \mathbb{R}^2$ be a non-empty compact set. Let $P,Q \in \mathbb{R}[{\bf x}, {\bf y}]$ be two polynomials and let
$F: K \times [0,1] \longrightarrow \mathbb{R}$ be a continuous function such that $F(p,0) = P(p)$ and $F(p,1) = Q(p)$ for all $p \in K$. Let $\varepsilon > 0.$
Then, there is a polynomial $\widetilde{P} \in \mathbb{R}[{\bf x},{\bf y}, {\bf t}]$ such that
\begin{itemize}
\item[\rm{(a)}] $|F(p,t) - \widetilde{P}(p,t)| < \varepsilon$ for all $p \in K$ and all $t \in [0,1].$
\item[\rm{(b)}] $P(p) = \widetilde{P}(p,0)$ and $Q(p) = \widetilde{P}(p,1)$ for all $p \in K.$ 
\end{itemize} 
\end{lem}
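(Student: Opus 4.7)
The plan is to combine a standard Stone--Weierstrass approximation with a linear-in-$\mathbf{t}$ correction that exactly restores the prescribed boundary values at $t=0$ and $t=1$. The point is that approximating $F$ directly cannot, in general, give equality with $P$ and $Q$ at the endpoints, so one must add a small corrective term that vanishes at both endpoints in the ``wrong'' place and equals the error in the ``right'' place.

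First I would apply the Weierstrass approximation theorem on the compact set $K \times [0,1] \subset \mathbb{R}^3$ (which is indeed compact as a product of compact sets) to the continuous function $F$. This produces a polynomial $R \in \mathbb{R}[\mathbf{x},\mathbf{y},\mathbf{t}]$ such that
\[
|F(p,t) - R(p,t)| < \varepsilon/2 \quad \text{for all } (p,t) \in K \times [0,1].
\]
In particular, for $p \in K$ one has $|P(p) - R(p,0)| = |F(p,0) - R(p,0)| < \varepsilon/2$ and similarly $|Q(p) - R(p,1)| < \varepsilon/2$, but of course $R(\mathbf{x},\mathbf{y},0)$ and $R(\mathbf{x},\mathbf{y},1)$ need not equal $P$ and $Q$ as polynomials.

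Next I would define the corrected polynomial
\[
\widetilde{P}(\mathbf{x},\mathbf{y},\mathbf{t}) := R(\mathbf{x},\mathbf{y},\mathbf{t}) + (1-\mathbf{t})\bigl[P(\mathbf{x},\mathbf{y}) - R(\mathbf{x},\mathbf{y},0)\bigr] + \mathbf{t}\bigl[Q(\mathbf{x},\mathbf{y}) - R(\mathbf{x},\mathbf{y},1)\bigr],
\]
which lies in $\mathbb{R}[\mathbf{x},\mathbf{y},\mathbf{t}]$. Setting $\mathbf{t}=0$ collapses this to $P(\mathbf{x},\mathbf{y})$ and setting $\mathbf{t}=1$ to $Q(\mathbf{x},\mathbf{y})$, so condition (b) in fact holds as a polynomial identity (hence on all of $\mathbb{R}^2$, not merely on $K$). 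For condition (a), substitute $(p,t) \in K \times [0,1]$ and rewrite the difference using the boundary identities $F(p,0) = P(p)$ and $F(p,1) = Q(p)$:
\[
F(p,t) - \widetilde{P}(p,t) = [F(p,t) - R(p,t)] - (1-t)[F(p,0) - R(p,0)] - t[F(p,1) - R(p,1)].
\]
Applying the triangle inequality with the uniform estimate from Weierstrass gives $|F(p,t) - \widetilde{P}(p,t)| \leq \varepsilon/2 + (1-t)\varepsilon/2 + t\varepsilon/2 = \varepsilon$, actually strictly less than $\varepsilon$ on the interior; a slightly sharper choice of $\delta = \varepsilon/(2 + \eta)$ for small $\eta > 0$ gives the strict inequality on all of $K \times [0,1]$.

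There is no real obstacle here: the mild subtlety is recognising that a naive Weierstrass approximation is insufficient, and that the simplest remedy is a convex-combination correction term of degree one in $\mathbf{t}$, which simultaneously (i) vanishes when multiplied by the corresponding endpoint error and (ii) adds only an error of the same order as the original approximation. This same trick --- absorbing a bounded error into a polynomial linear interpolant --- will presumably be applied componentwise to the matrix deformation in the proof of (1.24).
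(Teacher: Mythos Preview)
Your argument is essentially identical to the paper's: apply Stone--Weierstrass to get a polynomial approximant within $\varepsilon/2$, then add the linear-in-$\mathbf{t}$ correction $(1-\mathbf{t})[P-R(\cdot,0)]+\mathbf{t}[Q-R(\cdot,1)]$ to force the endpoint equalities. Your hesitation about the strict inequality is unnecessary: since each of the three terms in your decomposition satisfies a \emph{strict} bound $<\varepsilon/2$, the sum is automatically $<\varepsilon/2+(1-t)\varepsilon/2+t\,\varepsilon/2=\varepsilon$ for every $t\in[0,1]$, so no sharper choice of $\delta$ is needed.
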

\begin{proof} {\rm By the Theorem of Stone-Weierstrass (see \cite{D} (7.4.1)) there is a polynomial $\overline{P} \in \mathbb{R}[{\bf x},{\bf y}, {\bf t}]$ such that}
$$|F(p,t) - \overline{P}(p,t)| < \frac{\varepsilon}{2} \mbox{ {\rm for all} } p \in K \mbox{ {\rm and all} } t \in [0,1].$$
{\rm Now, set}
$$\widetilde{P}({\bf x},{\bf y}, {\bf t}) := 
\overline{P}({\bf x},{\bf y},{\bf t}) + (1- {\bf t})\big(P({\bf x},{\bf y}) - \overline{P}({\bf x},{\bf y},0)\big) + {\bf t}\big(Q({\bf x},{\bf y}) - \overline{P}({\bf x},{\bf y} ,1)\big).$$
{\rm It is easy to see that $\widetilde{P}$ has the requested properties.}
\end{proof}

\begin{prop}\label{Proposition RCF 1} Let $M,N \in \mathbb{R}[{\bf x},{\bf y}]^{2\times 2}$ such that $\mathrm{det}\big(M(p)\big) > 0$ and $\mathrm{det}\big(N(p)\big) > 0$ for all $p \in \overline{U}.$ 
Then, the matrix $N$ is connected on $\overline{U}$ to $M$ by a polynomial family of polynomial $2\times 2$-matrices with positive determinant on $\overline{U}.$ More precisely:\\
There is a matrix
$$\widetilde{P} = \begin{pmatrix}\widetilde{P}_{11} & \widetilde{P}_{12} \\ \widetilde{P}_{21} & \widetilde{P}_{22}\end{pmatrix} \in \mathbb{R}[{\bf x},{\bf y},{\bf t}]^{2\times 2}$$
such that with $P^{(t)}({\bf x},{\bf y}) := \widetilde{P}({\bf x},{\bf y},t)$ (for $t \in \mathbb{R}$) we have:  
\begin{itemize}
 \item[\rm{(a)}] $P^{(0)}(p) = N(p)$ for all $p \in \overline{U}.$
 \item[\rm{(b)}] $P^{(1)}(p) = M(p)$ for all $p \in \overline{U}.$
 \item[\rm{(c)}] $\mathrm{det}\big(P^{(t)}(p)\big) > 0$ for all $p \in \overline{U}$ and all $t \in [0,1].$
\end{itemize}
\end{prop}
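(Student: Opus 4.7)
The plan is to first build a continuous family of matrices on $\overline{U}$ connecting $N$ to $M$ with uniformly positive determinant, and then to approximate this family entrywise by a polynomial family via Lemma~\ref{Lemma RCF 1}, taking the approximation fine enough so that positivity of the determinant is preserved and the prescribed values at $t=0$ and $t=1$ are matched exactly.

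For the first step I view $M$ and $N$ as elements of $\mathcal{C}^\omega(U)^{2\times 2}$ and apply Proposition~\ref{Proposition CF 1} to each separately: this produces analytic families $\big(M^{(s)}\big)_{s \in [0,1]}$ and $\big(N^{(s)}\big)_{s \in [0,1]}$ with $M^{(0)} = N^{(0)} = \mathbf{1}^{2\times 2}$, $M^{(1)} = M$, $N^{(1)} = N$, and positive determinant throughout $U$. Concatenating them gives a continuous map $F : \overline{U} \times [0,1] \longrightarrow \mathbb{R}^{2\times 2}$ defined by $F(p,t) = N^{(1-2t)}(p)$ for $t \in [0,1/2]$ and $F(p,t) = M^{(2t-1)}(p)$ for $t \in [1/2, 1]$, satisfying $F(p,0) = N(p)$, $F(p,1) = M(p)$, and $\det F(p,t) > 0$ on $\overline{U} \times [0,1]$. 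The extension of the construction in Notation and Remark~\ref{Notation Remark CF1} from $U$ to its closure is legitimate because $\overline{U}$ is still star-shaped (hence simply connected), the column norms $\|M_{\bullet j}\|$ and $\|N_{\bullet j}\|$ are bounded below by positive constants on $\overline{U}$ (a consequence of $\det M, \det N > 0$ on the compact $\overline{U}$), and the angle functions $\alpha, \beta$ from Notation and Remark~\ref{Notation Remark CF1} are Lipschitz on $U$ and thus extend continuously to $\overline{U}$.

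Since $\overline{U} \times [0,1]$ is compact, the continuous function $\det F$ attains a positive minimum $\mu > 0$, and the entries of $F$ are bounded by some constant $C > 0$. Applying Lemma~\ref{Lemma RCF 1} to each entry $F_{ij}$ with the prescribed boundary data $P = N_{ij}, \ Q = M_{ij}$ and a common tolerance $\varepsilon$ produces polynomials $\widetilde{P}_{ij} \in \mathbb{R}[{\bf x},{\bf y},{\bf t}]$ satisfying $\widetilde{P}_{ij}({\bf x},{\bf y},0) = N_{ij}$, $\widetilde{P}_{ij}({\bf x},{\bf y},1) = M_{ij}$, and $|F_{ij}(p,t) - \widetilde{P}_{ij}(p,t)| < \varepsilon$ uniformly on $\overline{U} \times [0,1]$. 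Assembling the $\widetilde{P}_{ij}$ into a matrix $\widetilde{P}$ and writing $P^{(t)}$ for the corresponding matrix, column-bilinearity of the determinant yields a stability estimate of the shape
\[
\big|\det P^{(t)}(p) - \det F(p,t)\big| \;\le\; 8(C+\varepsilon)\,\varepsilon
\]
valid uniformly on $\overline{U} \times [0,1]$. Choosing $\varepsilon$ small enough that the right-hand side is less than $\mu$ forces $\det P^{(t)}(p) > 0$ on $\overline{U} \times [0,1]$, proving (c); conditions (a) and (b) are built into Lemma~\ref{Lemma RCF 1}(b).

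The main obstacle is really the first step. Proposition~\ref{Proposition CF 1} as stated only guarantees positivity of the interpolating family on $U$, whereas here we need it on $\overline{U}$, and this forces a careful verification that the construction in Notation and Remark~\ref{Notation Remark CF1} extends continuously across the boundary. Once that technicality is settled, the remainder is a routine Stone--Weierstrass step combined with a compactness-based perturbation estimate for the determinant.
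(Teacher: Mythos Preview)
Your argument follows the paper's proof almost exactly: build a continuous family from $N$ to $M$ by two applications of Proposition~\ref{Proposition CF 1} concatenated through the identity, then approximate entrywise by polynomials via Lemma~\ref{Lemma RCF 1} with $\varepsilon$ small enough that positivity of the determinant survives, using compactness of $\overline{U}\times[0,1]$ for the uniform bounds.

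The only substantive difference is how the boundary $\partial U$ is handled. You extend the angle functions of Notation and Remark~\ref{Notation Remark CF1} from $U$ to $\overline{U}$ by a Lipschitz argument; the paper instead first enlarges to a bounded open star-shaped $W\supset\overline{U}$ on which $\det M$ and $\det N$ remain positive (possible because $\{\det M\le 0\}\cup\{\det N\le 0\}$ is closed and disjoint from the compact $\overline{U}$), applies Proposition~\ref{Proposition CF 1} on $W$, and then restricts to $\overline{U}\subset W$ for the compactness step. The paper's device is a bit cleaner: your Lipschitz claim is true, but not for the reason you give, since for a general star-shaped $U$ the segment $[p,q]$ need not lie in $U$ and the intrinsic path metric need not be comparable to the Euclidean one near the boundary. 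What actually rescues the Lipschitz bound is that $M$ and $N$, being polynomial with positive determinant on the compact $\overline{U}$, retain these properties on an open neighbourhood of $\overline{U}$ --- which is exactly the enlargement trick the paper makes explicit from the outset.
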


\begin{proof} {\rm Observe that the closed set } 
$$\mathbb{S} := \{p \in \mathbb{R}^2 \mid \mathrm{det}\big(M(p)\big) \leq 0\ \mbox{ {\rm or} } \mathrm{det}\big(N(p)\big) \leq 0\}$$ 
{\rm is disjoint to $\overline{U}.$ We thus find a bounded open star-shaped set $W$ such that $\overline{U} \subset W$ and $W \cap \mathbb{S} = \emptyset.$ Now, clearly $M,N \in \mathbb{R}[{\bf x},{\bf y}]^{2 \times 2}$ with $\mathrm{det}\big(M(p)\big), \mathrm{det}\big(N(p)\big) > 0$ for all $p \in W.$ According to Proposition~\ref{Proposition CF 1} we have two continuous maps}
\begin{align*}
\widetilde{M} &= \begin{pmatrix}\widetilde{M}_{11} & \widetilde{M}_{12} \\\widetilde{M}_{21} & \widetilde{M}_{22}\end{pmatrix}: W\times [0,1] \longrightarrow \mathbb{R}^{2\times 2} \mbox{ {\rm with} } \mathrm{det}\big(\widetilde{M}(p,t)\big) > 0, \mbox{ {\rm for all} } (p,t) \in W\times[0,1],\\
\widetilde{N} &= \begin{pmatrix}\widetilde{N}_{11} & \widetilde{N}_{12} \\\widetilde{N}_{21} & \widetilde{N}_{22}\end{pmatrix}: W\times [0,1] \longrightarrow \mathbb{R}^{2\times 2} \mbox{ {\rm with} } \mathrm{det}\big(\widetilde{N}(p,t)\big) > 0, \mbox{ {\rm for all} } (p,t) \in W\times[0,1],
\end{align*}
{\rm such that}
\begin{align*}
\widetilde{M}(p,0) &= {\bf 1}^{2 \times2},\mbox{ {\rm and} }  \widetilde{M}(p,1) = M(p), \mbox{ {\rm for all} } p \in W,\\
\widetilde{N}(p,0) &= {\bf 1}^{2 \times2},\mbox{ {\rm and} }  \widetilde{N}(p,1) = N(p), \mbox{ {\rm for all} } p \in W.
\end{align*}
{\rm Now, for all $i,j \in \{1,2\}$ we consider the continuous functions}
$$\widetilde{F}_{ij}:W \times [0,1] \longrightarrow \mathbb{R} ; \quad \widetilde{F}_{i,j}(p,t) := \begin{cases}\widetilde{N}_{ij}(p,1-2t) & \mbox{ {\rm if } } t \in [0,\frac{1}{2}]\\
                                                                                  \widetilde{M}_{ij}(p,2t-1) & \mbox{ {\rm if } } t \in [\frac{1}{2},1]            
                                                                                 \end{cases}$$
{\rm and the matrix (here $\mathcal{C}(\bullet)$ denotes the ring of continuous functions)}
$$ \widetilde{F} := \begin{pmatrix}\widetilde{F}_{11} & \widetilde{F}_{12} \\ \widetilde{F}_{21} & \widetilde{F}_{22}\end{pmatrix} \in \mathcal{C}(W \times [0,1])^{2\times 2}.$$ 
{\rm Then $\widetilde{F}(p,0) = N(p), \widetilde{F}(p,1) = M(p)$ and  $\mathrm{det}\big(\widetilde{F}(p,t)\big) > 0$ for all $p \in W$ and all $t \in [0,1].$ 
As $\overline{U} \subset W$ is compact, there are $c, \delta > 0$ such that for all $i,j \in \{1,2\},$ all $p \in \overline{U}$ and all $t \in [0,1]$ it holds}
$$-c \leq \widetilde{F}_{ij}(p,t) \leq c \; \mbox{ {\rm and} } \;\mathrm{det}\big(\widetilde{F}(p,t)\big) > \delta.$$
{\rm As the map $\mathrm{det}:\mathbb{R}^{2\times2}\longrightarrow \mathbb{R}$ is uniformly continuous on any compact subset of $\mathbb{R}^4$ we find some $\varepsilon > 0$ such that:}
\begin{itemize}
\item[\rm{(1)}] $|\mathrm{det}\big(\widetilde{F}(p,t)\big) - \mathrm{det}\begin{pmatrix}m_{11} & m_{12}\\m_{21} & m_{22}\end{pmatrix}| < \frac{\delta}{2}$ {\rm for all
$p \in \overline{U}$, all $t \in [0,1]$ and all $m_{ij} \in \mathbb{R}$ with $|m_{ij}-\widetilde{F}_{ij}(p,t)| < \varepsilon  \quad (i,j \in \{1,2\})$.}
\end{itemize}
{\rm Now, we apply Lemma~\ref{Lemma RCF 1} to the four continuous functions $\widetilde{F}_{ij}:\overline{U}\times [0,1] \longrightarrow \mathbb{R}$ and obtain four polynomials
$\widetilde{P}_{ij} \in \mathbb{R}[{\bf x},{\bf y},{\bf t}],$ such that for all $i,j \in \{1,2\}$ we have:} 
\begin{itemize}
\item[\rm{(2)}] $|\widetilde{F}_{ij}(p,t) - \widetilde{P}_{ij}(p,t)| < \varepsilon$ for all $p \in \overline{U}$ and all $t \in [0,1],$
\item[\rm{(3)}] $N_{ij}(p) = \widetilde{F}_{ij}(p,0) = \widetilde{P}_{ij}(p,0)$ for all $p \in \overline{U}$ and 
\item[\rm{(4)}] $M_{ij}(p) = \widetilde{F}_{ij}(p,1) = \widetilde{P}_{ij}(p,1)$ for all $p \in \overline{U}.$ 
\end{itemize}
We set 
$$
\widetilde{P} := \begin{pmatrix}\widetilde{P}_{11} & \widetilde{P}_{12} \\ \widetilde{P}_{21} & \widetilde{P}_{22}\end{pmatrix}.
$$
 Then, the above statements (1) and (2) yield that
$$
|\mathrm{det}\big(\widetilde{F}(p,t)\big) - \mathrm{det}\big(\widetilde{P}(p,t)\big)| < \frac{\delta}{2} \mbox{ 
	 for all } p \in \overline{U} \mbox{ {\rm and all} } t \in [0,1],
$$
so that with $P^{(t)}(p) := \widetilde{P}(p,t)$ and  (because also $\det(\tilde{F}(p,t))>\delta$) we get
$$\mathrm{det}\big(P^{(t)}(p)\big) = \mathrm{det}\big(\widetilde{P}(p,t)\big) > \frac{\delta}{2} >0 \mbox{ {\rm for all} } p \in \overline{U} \mbox{ {\rm and all} } t \in [0,1].$$ 
{\rm By the above statements (3) and (4) we obtain}
$$P^{(0)}(p) =\widetilde{P}(p,0) = N(p) \mbox{ {\rm and} } P^{(1)}(p) = \widetilde{P}(p,1) = M(p) \mbox{ {\rm for all} } p \in \overline{U}.$$
{\rm Altogether, this proves our claim.} 
\end{proof}

\begin{rem}\label{Remark RCF 1} {\rm As an immediate consequence we now get the result announced in the introduction under (1.14).} 
\end{rem}

\begin{rem}\label{Remark RCF} {\rm As early as 2002, the first named author did ask for the existence of a connecting family $(M^{(t)})_{t \in [0,1]}$ as in  Proposition~\ref{Proposition RCF 1} -- but only continuous, not polynomial -- at the occasion of a talk he gave at the IIT Bombay. A few weeks after this, A.R. Shastri \cite{Sh} suggested a proof for the existence of a piecewise linear connecting family $(M^{(t)})_{t \in [0,1]}.$ The authors are grateful to him for his hint. Clearly, instead of Proposition~\ref{Proposition CF 1} one also could use Shastri's result to prove Proposition~\ref{Proposition RCF 1}.} 
\end{rem}

{\rm As an easy consequence of the above proposition we now get:} 

\begin{cor}\label{Corollary RCF 2} Let $M = \begin{pmatrix} M_{11} & M_{12} \\ M_{21} & M_{22} \end{pmatrix} \in \mathbb{R}({\bf x},{\bf y})^{2 \times 2}$ be such that none of its entries $M_{ij}, (i,j \in \{1,2\})$ has a pole in $\overline{U},$  and such that $\mathrm{det}\big(M(p)\big) > 0$ for all $p \in \overline{U}.$\\
Then, the unit matrix ${\bf 1}^{2\times 2}$ is connected over $\overline{U}$ to $M$ by a rational family of $2\times 2$-matrices which are defined and of positive determinant on $\overline{U}$. More precisely:\\
There is a matrix
$$\widetilde{Q} = \begin{pmatrix}\widetilde{Q}_{11} & \widetilde{Q}_{12} \\ \widetilde{Q}_{21} & \widetilde{Q}_{22}\end{pmatrix} \in \mathbb{R}({\bf x},{\bf y},{\bf t})^{2\times 2}$$
such that no $\widetilde{Q}_{ij}$ has a pole on $\overline{U}$ and such that, with $Q^{(t)}({\bf x},{\bf y}) : = \widetilde{Q}({\bf x},{\bf y},t)$ (for $t \in \mathbb{R}$): 
\begin{itemize}
\item[\rm{(a)}] $Q^{(0)} = {\bf 1}^{2 \times 2}.$
\item[\rm{(b)}] $Q^{(1)}(p) = M(p)$ for all $p \in \overline{U}.$
\item[\rm{(c)}] $\mathrm{det}\big(Q^{(t)}(p)\big) > 0$ for all $p \in \overline{U}$ and all $t \in [0,1].$
\end{itemize}
\end{cor}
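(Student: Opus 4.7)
The plan is to reduce this rational version to the polynomial case already proved in Proposition~\ref{Proposition RCF 1}, by clearing a common denominator from $M$ and interpolating the resulting denominator linearly alongside the polynomial deformation of the numerator.

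First I would produce the decomposition. Writing each $M_{ij} = A_{ij}/B_{ij}$ as a reduced fraction, the pole-freeness hypothesis means $B_{ij}$ has no zero in $\overline{U}$, so that $D := B_{11}B_{12}B_{21}B_{22} \in \mathbb{R}[{\bf x},{\bf y}]$ is nonvanishing on $\overline{U}$ and $M = D^{-1}N$ with $N \in \mathbb{R}[{\bf x},{\bf y}]^{2\times 2}$. Since $\overline{U}$ is connected (closure of the connected set $U$) and $D$ is continuous and nowhere zero on it, $D$ has constant sign on $\overline{U}$; after replacing $(D,N)$ by $(-D,-N)$ if necessary I may assume $D(p) > 0$ for all $p \in \overline{U}$. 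From $\det(M) = D^{-2}\det(N)$ and $\det(M(p)) > 0$ on $\overline{U}$ it then follows that $\det(N(p)) > 0$ for all $p \in \overline{U}$.

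Next I would apply Proposition~\ref{Proposition RCF 1} to the two polynomial matrices ${\bf 1}^{2\times 2}$ and $N$, both of which have positive determinant on $\overline{U}$. This yields a matrix $\widetilde{P} \in \mathbb{R}[{\bf x},{\bf y},{\bf t}]^{2\times 2}$ with $\widetilde{P}(p,0) = {\bf 1}^{2\times 2}$, $\widetilde{P}(p,1) = N(p)$ and $\det(\widetilde{P}(p,t)) > 0$ for all $(p,t) \in \overline{U}\times[0,1]$. To handle the denominator, I would introduce the interpolant $d({\bf x},{\bf y},{\bf t}) := (1-{\bf t}) + {\bf t}\,D({\bf x},{\bf y}) \in \mathbb{R}[{\bf x},{\bf y},{\bf t}]$; for $(p,t) \in \overline{U}\times[0,1]$ the value $d(p,t)$ is a convex combination of the positive reals $1$ and $D(p)$, hence strictly positive. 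Defining
\[
\widetilde{Q} := \frac{1}{d}\,\widetilde{P} \in \mathbb{R}({\bf x},{\bf y},{\bf t})^{2\times 2},
\]
the positivity of $d$ on $\overline{U}\times[0,1]$ guarantees that no entry $\widetilde{Q}_{ij}$ has a pole on $\overline{U}$ for any $t \in [0,1]$; moreover $\widetilde{Q}(p,0) = {\bf 1}^{2\times 2}$, $\widetilde{Q}(p,1) = D(p)^{-1}N(p) = M(p)$, and $\det(\widetilde{Q}(p,t)) = \det(\widetilde{P}(p,t))/d(p,t)^2 > 0$ on $\overline{U}\times[0,1]$, giving (a)--(c).

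Since Proposition~\ref{Proposition RCF 1} is doing all the genuinely analytic work, this is essentially bookkeeping; the only subtle point is that the linear interpolant $d$ between $1$ and $D$ must stay away from zero on $\overline{U}\times[0,1]$, and that is exactly what the sign normalization $D > 0$ on $\overline{U}$ (afforded by connectedness of $\overline{U}$) provides. A more sophisticated interpolation would be needed only if one did not first arrange $D > 0$.
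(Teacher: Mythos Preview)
Your proof is correct and follows essentially the same strategy as the paper: clear a common positive denominator to reduce to the polynomial case (Proposition~\ref{Proposition RCF 1}), then divide back. The only cosmetic difference is in how the denominator is handled: the paper applies Proposition~\ref{Proposition RCF 1} to the pair $G\,{\bf 1}^{2\times 2}$ and $H$ (your $D\,{\bf 1}^{2\times 2}$ and $N$) and then divides by the $t$-independent $G$, whereas you apply it to ${\bf 1}^{2\times 2}$ and $N$ and divide by the $t$-linear interpolant $d=(1-{\bf t})+{\bf t}D$. Both choices work for the same reason, namely that the denominator stays strictly positive on $\overline{U}\times[0,1]$; your version just shifts the interpolation of the scalar part from inside Proposition~\ref{Proposition RCF 1} to an explicit outer factor.
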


\begin{proof} {\rm The closed set 
$$\mathcal{P} := \bigcup_{1 \leq i,j \leq 2} \mathrm{Pole}(M_{ij}) \cup \{p \in \mathbb{R}^2 \mid \mathrm{det}\big(M(p)\big) \leq 0\}$$ 
is disjoint to $\overline{U}.$ We thus find a bounded open star-shaped set $W$ such that $\overline{U} \subset W$ and $W \cap \mathcal{P} = \emptyset.$ So, none of the four entries $M_{ij}$ of $M$ has a pole in $W$ and moreover $\mathrm{det}\big(M(p)\big) > 0$ for all $p \in W.$ As $W$ is path-wise connected and by taking common denominators we find} 
$$H \in \mathbb{R}[{\bf x},{\bf y}]^{2\times2} \mbox{ {\rm and } } G \in \mathbb{R}[{\bf x},{\bf y}] \mbox{ {\rm with} } G(p) > 0 \mbox{ {\rm and} } M(p) = \frac{H(p)}{G(p)} \mbox{ {\rm for all} } p \in W.$$
{\rm In particular we have $\mathrm{det}\big(G(p){\bf 1}^{2\times 2}\big) > 0$  and $\mathrm{det}\big(H(p)\big) > 0$ for all $p \in W,$ hence for all $p \in \overline{U}.$
By Proposition~\ref{Proposition RCF 1} there is a matrix $\widetilde{P} \in \mathbb{R}[{\bf x},{\bf y},{\bf t}]^{2\times 2}$
such that}
\begin{itemize}
 \item[\rm{(1)}] $\widetilde{P}(p,0) = G(p){\bf 1}^{2\times2}$ for all $p \in \overline{U};$
 \item[\rm{(2)}] $\widetilde{P}(p,1) = H(p)$ for all $p \in \overline{U};$
 \item[\rm{(3)}] $\mathrm{det}\big(\widetilde{P}(p,t)\big) > 0$ for all $p \in \overline{U}$ and all $t \in [0,1].$
\end{itemize}
{\rm Now, with $\widetilde{Q}: = \frac{\widetilde{P}}{G}$ we get our claim.}
\end{proof}

\noindent {\rm \bf{Isotopies of Embedded Blowups.}} {\rm As an application of Proposition~\ref{Proposition RCF 1} we now prove the result on the deformation of regular embedded blowups by means of isotopies mentioned in (1.13). }

\begin{thm}\label{Theorem REIS}  Let $B, C \in \mathfrak{Bl}_U(Z)$ be such that $B$ and $C$ are 
relatively oriented embedded isomorphic. 
Then, $B$ and $C$ are connected by an isotopy of $U \times \mathbb{P}^1$-automorphisms. More precisely, there is a matrix
$$\widetilde{M} = \begin{pmatrix}\widetilde{M}_{11} & \widetilde{M}_{12} \\ \widetilde{M}_{21} & \widetilde{M}_{22}\end{pmatrix} \in \mathbb{R}[{\bf x},{\bf y},{\bf t}]^{2\times 2}$$
such that with $M^{(t)}({\bf x},{\bf y}) := \widetilde{M}({\bf x},{\bf y},t) \big(\mbox{ for } t \in \mathbb{R})$ we have:   
\begin{itemize}
 \item[\rm{(a)}] $\mathrm{det}\big(M^{(t)}(p)\big) > 0$ for all $p \in U$ and all $t \in [0,1]$  -- and hence $\varphi^{(t)} := \varphi_{M^{(t)}}$ is a relative oriented automorphism of $U \times \mathbb{P}^1$ for all $t \in [0,1]$.
 \item[\rm{(b)}] $\varphi^{(0)}(B) = B$ and $\varphi^{(1)}(B) = C.$
 \end{itemize}
\end{thm}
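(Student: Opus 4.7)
My plan is to reduce the statement to a direct application of Proposition~\ref{Proposition RCF 1}, using the characterization of isomorphy from (1.15). First I would fix a pair $\underline{f} = (f_0,f_1) \in \mathbb{R}[{\bf x},{\bf y}]^2$ with $B = \mathrm{Bl}_U(\underline{f}) \in \mathfrak{Bl}_U(Z)$. Since $B \cong C$, the characterization (1.15) provides a matrix $M \in \mathbb{R}[{\bf x},{\bf y}]^{2 \times 2}$ with $\det(M(p)) > 0$ for $p \in U$ and such that $C = \mathrm{Bl}_U(\underline{f} M) = \varphi_M(B)$. (The mild discrepancy between positivity on $U$ and positivity on $\overline{U}$, needed to invoke Proposition~\ref{Proposition RCF 1}, can be absorbed by enlarging $U$ infinitesimally inside an open neighbourhood on which $\det(M)$ remains strictly positive, which exists by continuity of $\det(M)$; I would include a brief justification of this.)

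Next I would apply Proposition~\ref{Proposition RCF 1} to the pair of matrices $M$ and $N := {\bf 1}^{2 \times 2}$. Both have strictly positive determinant on $\overline{U}$ (the unit matrix trivially, the matrix $M$ by the step above), so the proposition produces a polynomial matrix
$$\widetilde{M} = \begin{pmatrix}\widetilde{M}_{11} & \widetilde{M}_{12} \\ \widetilde{M}_{21} & \widetilde{M}_{22}\end{pmatrix} \in \mathbb{R}[{\bf x},{\bf y},{\bf t}]^{2 \times 2}$$
such that, writing $M^{(t)}({\bf x},{\bf y}) := \widetilde{M}({\bf x},{\bf y},t)$, one has $M^{(0)}(p) = {\bf 1}^{2 \times 2}$, $M^{(1)}(p) = M(p)$ for all $p \in \overline{U}$, and $\det(M^{(t)}(p)) > 0$ for all $p \in \overline{U}$ and all $t \in [0,1]$. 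In particular the positivity holds a fortiori on $U$, so assertion (a) is immediate and each map $\varphi^{(t)} := \varphi_{M^{(t)}}$ is a bona fide $U \times \mathbb{P}^1$-automorphism in the sense of (1.14).

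Finally I would verify the endpoints in (b). At $t=0$ the matrix $M^{(0)}$ is the identity, so $\varphi^{(0)}$ is the identity map on $U \times \mathbb{P}^1$ and hence $\varphi^{(0)}(B) = B$. At $t=1$ we have $M^{(1)} = M$, and by (1.15) together with the initial choice of $M$ we get $\varphi^{(1)}(B) = \varphi_M(B) = \mathrm{Bl}_U(\underline{f} M) = C$, which is precisely the second half of (b).

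The theorem is therefore essentially a packaging of Proposition~\ref{Proposition RCF 1} in the language of blowups, and I do not expect any substantive obstacle beyond the book-keeping already carried out by (1.15). The only genuinely delicate point is the passage from \emph{positivity on $U$} (implicit in an automorphism of $U \times \mathbb{P}^1$) to \emph{positivity on $\overline{U}$} (needed for Proposition~\ref{Proposition RCF 1}), which is the small technicality I would address explicitly to keep the reduction clean.
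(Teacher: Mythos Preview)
Your overall architecture is right --- reduce to Proposition~\ref{Proposition RCF 1} via (1.15), then read off the isotopy --- and the endpoint verification is fine. But the step you flag as a ``small technicality'' is in fact the one genuine obstacle, and your proposed fix does not work. From (1.15) you only obtain a matrix $M$ with $\det\big(M(p)\big)>0$ for $p\in U$; nothing prevents $\det(M)$ from vanishing on the boundary $\partial U=\overline{U}\setminus U$ (think of a factor like $\rho^2-{\bf x}^2-{\bf y}^2$). Continuity of $\det(M)$ therefore does \emph{not} produce an open neighbourhood of $\overline{U}$ on which the determinant stays positive, and ``enlarging $U$ infinitesimally'' cannot be carried out. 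Without positivity on $\overline{U}$ you cannot invoke Proposition~\ref{Proposition RCF 1}.

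The paper closes this gap not by a compactness/continuity argument but by \emph{changing the matrix} using Lemma~\ref{Lemma EI2}. Writing $\underline{g}=\underline{f}N$ with $\det\big(N(p)\big)>0$ on $U$, one replaces $N$ by
\[
N_\gamma \;=\; N \;+\; \gamma\begin{pmatrix} g_1 f_1 & -g_0 f_1\\ -g_1 f_0 & g_0 f_0\end{pmatrix},
\]
which still satisfies $\underline{g}=\underline{f}N_\gamma$ (so the blowup $C$ is unchanged) while $\det(N_\gamma)=\det(N)+\gamma(g_0^2+g_1^2)$. Since $g_0,g_1$ have no common zero on $\partial U$ (because $Z_{\overline{U}}(\underline{g})=Z\subset U$), compactness of $\overline{U}$ lets you choose $\gamma\gg 0$ so that $\det\big(N_\gamma(p)\big)>0$ for all $p\in\overline{U}$. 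With $M:=N_\gamma$ the hypothesis of Proposition~\ref{Proposition RCF 1} is now legitimately satisfied, and the rest of your argument goes through verbatim.
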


\begin{proof} {\rm Let $\underline{f} \in \mathbb{R}[{\bf x},{\bf y}]^2$ be such that $Z_{\overline{U}}(\underline{f}) = Z.$ As $B \cong C$, we find some matrix $N \in \mathbb{R}[{\bf x},{\bf y}]^{2 \times 2}$ with $\mathrm{det}\big(N(p)\big) > 0$ for all $p \in U$ and such that, with $(g_0,g_1) = \underline{g} := \underline{f}N,$ it holds $C = \mathrm{Bl}_U(\underline{g})$ (see (1.7)). Now, we choose $\gamma \in \mathbb{R}_{>0}$ and consider the matrix 
$$M := N_\gamma = N + \gamma\begin{pmatrix}g_1 f_1 & -g_0 f_1\\ -g_1 f_0 & g_0 f_0\end{pmatrix}$$
of Lemma~\ref{Lemma EI2}. Then, by statements (b), (c) and (d) of that Lemma and as $g_0$ and $g_1$ have no common zero on the boundary of $U,$ it follows that for $\gamma$ large enough we have $\mathrm{det}\big(M(p)) > 0$ for all $p \in \overline{U}$ and $\underline{g} = \underline{f}M.$ \\
But now Proposition~\ref{Proposition RCF 1} yields that there is a matrix $\widetilde{M} \in \mathbb{R}[{\bf x},{\bf y},{\bf t}]^{2\times 2}$ such that, with $M^{(t)}({\bf x},{\bf y}) := \widetilde{M}({\bf x},{\bf y},t),$ 
it holds }
\begin{itemize}
 \item[\rm{(1)}] {\rm $M^{(0)}(p) = {\bf 1}^{2\times 2}$ for all $p \in \overline{U};$}
 \item[\rm{(2)}] {\rm $M^{(1)}(p) = M(p)$ for all $p \in \overline{U};$}
 \item[\rm{(3)}] {\rm $\mathrm{det}\big(M^{(t)}(p)\big) > 0$ for all $p \in \overline{U}$ and all $t \in [0,1].$}
\end{itemize}
{\rm In particular, we get the stated existence of the matrix $\widetilde{M} \in \mathbb{R}[{\bf x},{\bf y},{\bf t}]^{2\times 2}$ such that statement (a) holds. \\
As $\varphi^{(0)}(B) = \varphi_{M^{(0)}}(B) = \varphi_{{\bf 1}^{2 \times 2}}(B)  = \mathrm{id}_{U \times \mathbb{P}^1}(B) = B$ and}
$C = \mathrm{Bl}_U(\underline{f}M)= \mathrm{Bl}_U(\underline{f}M^{(1)}) = \varphi_{M^{(1)}}\big(\mathrm{Bl}_U(\underline{f})\big) = \varphi_{M^{(1)}}(B) = \varphi^{(1)}(B)$, 
{\rm we get statement (b).}

\end{proof}

\section{Further Examples of Families of Blowups}

\noindent {\rm \bf{Two Families of Regular Two-point Blowups.}} {\rm Already in Example~\ref{Exam.Moebius} and Example~\ref{Exam.4Points} 
we have presented deformations of regular blowups by means of a particularly simple matrix deformation.  We begin the present section with slightly more involved matrix 
deformations and we shall illustrate their effect on two non-isomorphic regular embedded two-point blowups.  We fix our settings as in the examples given in the introduction and in Section 2 by 
choosing $\rho =2, r = 4, U = \{(x,y) \in \mathbb{R}^2 \mid x^2 + y^2 < 4\}.$}

\begin{exas}\label{Example const det} {\rm (A) We fix a polynomial $a = a({\bf x},{\bf y}) \in \mathbb{R}[{\bf x},{\bf y}]$ and consider the matrix}
$$\widetilde{M} = \widetilde{M}({\bf x},{\bf y},{\bf t}) := \begin{pmatrix} 1 - a({\bf x},{\bf y}){\bf t} & a({\bf x},{\bf y}){\bf t} \\ -a({\bf x},{\bf y}){\bf t} & 1 + a({\bf x},{\bf y}){\bf t}\end{pmatrix} \in \mathbb{R}[{\bf x},{\bf y},{\bf t}]^{2 \times 2} \mbox{  \rm{with} } \mathrm{det}(\widetilde{M}) = 1$$
{\rm and the matrices} 
$$M^{(t)} = M^{(t)}({\bf x},{\bf y}) := \widetilde{M}({\bf x},{\bf y},t) \in \mathbb{R}[{\bf x},{\bf y}]^{2 \times 2} 
\mbox{ {\rm with} } \mathrm{det}(M^{(t)}) = 1 \mbox{ {\rm for all} } t \in \mathbb{R}.$$
{\rm So, for any regular blowup $B = \mathrm{Bl}_U(\underline{f}) = \mathrm{Bl}_U(f_0,f_1) \in \mathfrak{Bl}_U^{\mathrm{reg}}(Z)$ we get an isotopic family} 
$$\big(B^{(t)} = \mathrm{Bl}_U(\underline{f}M^{(t)})\big)_{t \in [0,1]} \mbox{ {\rm such that for all} } t \in [0,1] \mbox{ {\rm it holds:}}$$ 
$$B^{(t)} = \mathrm{Bl}_U\big(f_0 - t\cdot a({\bf x},{\bf y})(f_0 + f_1),f_1 + t\cdot a({\bf x},{\bf y})(f_0 + f_1) \big) \in 
\mathfrak{Bl}_U^{\mathrm{reg}}(Z) \mbox{ {\rm and} } B^{(t)} \cong B.$$ 
{\rm We thus get a family $\big(B^{(t)}\big)_{t \in [0,1]}$ of isotopic blowups $B^{(t)} \in \mathfrak{Bl}_U^{\mathrm{reg}}(Z),$ which connects $B = B^{(0)}$ with} 
$$C := B^{(1)} = \mathrm{Bl}_U(\underline{f}M^{(1)}) = \mathrm{Bl}_U\big(f_0 - a({\bf x},{\bf y})(f_0+f_1),f_1 + a({\bf x},{\bf y})(f_0+f_1)\big).$$
{\rm As announced, we aim to illustrate the situation by means of two regular two-point blowups, which are of 
	are of different (relative oriented embedded) isomorphism type, 
a situation which can indeed only occur for regular blowups with respect to more than one point. More precisely, we shall blow up $U$ with respect to 
two different pairs $\underline{f}$ of regular polynomials which both satisfy $Z_U(\underline{f}) = \{(\pm 1,0)\},$ 
but such that the sign distribution $\mathrm{sgn}_{\underline{f}}$ (see Definition and Remark ~\ref{Definition sgn}) is non-constant in the first case and constant in the second case.}

{\rm  (B) We keep the general settings of part (A), set $a({\bf x},{\bf y}) := {\bf x}{\bf y}$ and consider the regular two-point blowup $B:= \mathrm{Bl}_U(\underline{f})$ of $U$ with respect to $Z := \{(\pm 1,0)\}$ given by $f_0 := {\bf x}^2 + {\bf y}^2 -1$ and $f_1 := {\bf y}.$ We then have $\mathrm{sgn}_B\big((\pm 1,0)\big) = \pm 1$, so that the sign distribution $\mathrm{sgn}_B = \mathrm{sgn}_{\underline{f}}$ is non-constant. The visualization of the resulting family of two-point blowups $B^{(t)} \cong B^{(0)} = B$ is presented in Figure 6 for $t = 0,  0.5, 1.$}

\begin{figure}
	\centering
	\begin{tabular}{ccc}
		\includegraphics[width=.3\linewidth]{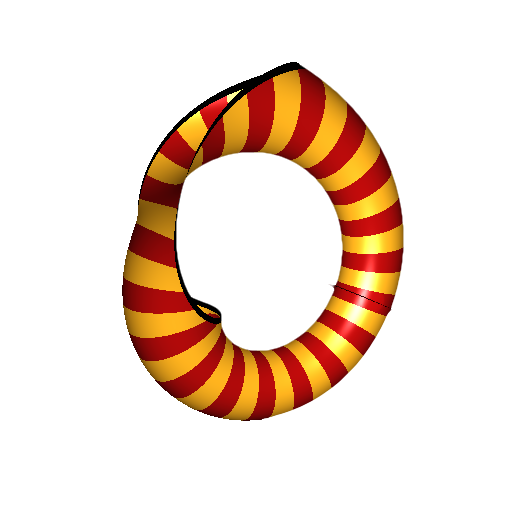} &  
		\includegraphics[width=.3\linewidth]{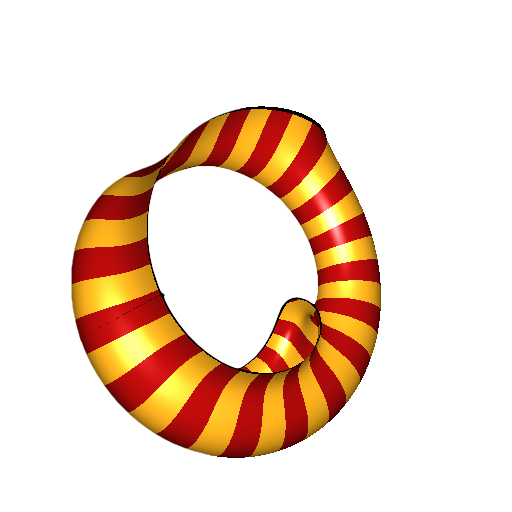} &
		\includegraphics[width=.3\linewidth]{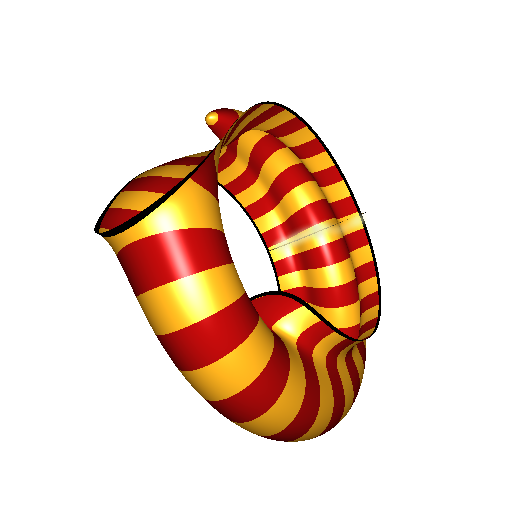} 
		\\
		\small $B^{(0)} = B$ & \small $B^{(0.5)}$ & \small $B^{(1)}=C$
	\end{tabular}
\caption{Deformation of a regular two-point blowup with non-constant sign distribution}

\end{figure}

{\rm (C) We now choose $a({\bf x},{\bf y}) := {\bf y}$ and consider the the regular two-point blowup $B:= \mathrm{Bl}_U(\underline{f})$ of $U$ with respect to $Z := \{(\pm 1,0)\}$
given by $f_0 := {\bf x}^2 -1$  and $f_1 := {\bf x}{\bf y}.$ This time, it holds $\mathrm{sgn}_B\big((\pm 1,0)\big) = 1$, so that the sign distribution $\mathrm{sgn}_B = \mathrm{sgn}_{\underline{f}}$ is constant. This means, that we get a two-point blowup whose embedded isomorphism type differs  from the isomorphism type of the blowup of part (B).  The visualization of the resulting family of two-point blowups $B^{(t)} \cong B^{(0)} = B$ is presented in Figure 7 for $t = 0, 0.5, 1.$}

\begin{figure}
	\centering
	\begin{tabular}{ccc}
		\includegraphics[width=.3\linewidth]{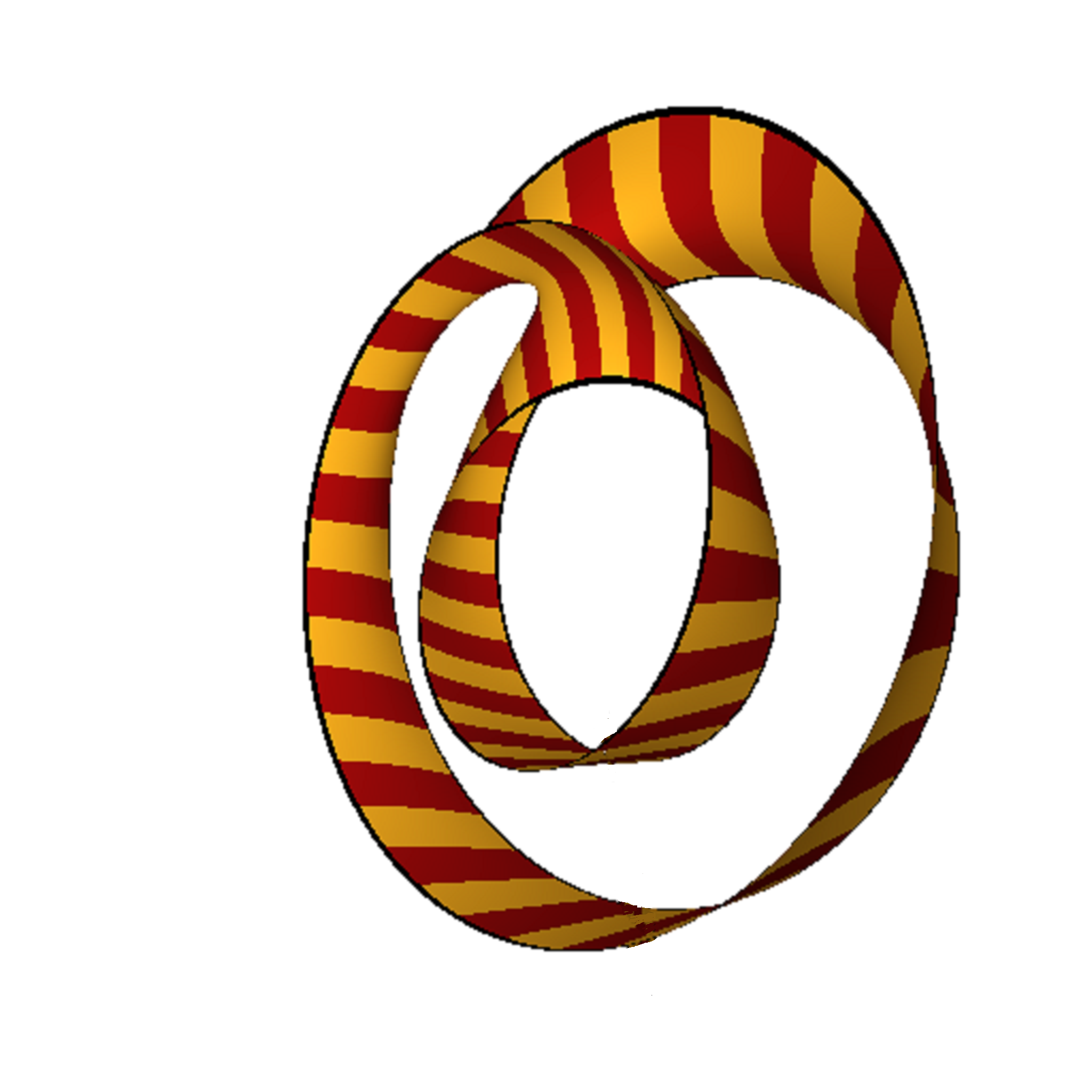} & 
		\includegraphics[width=.3\linewidth]{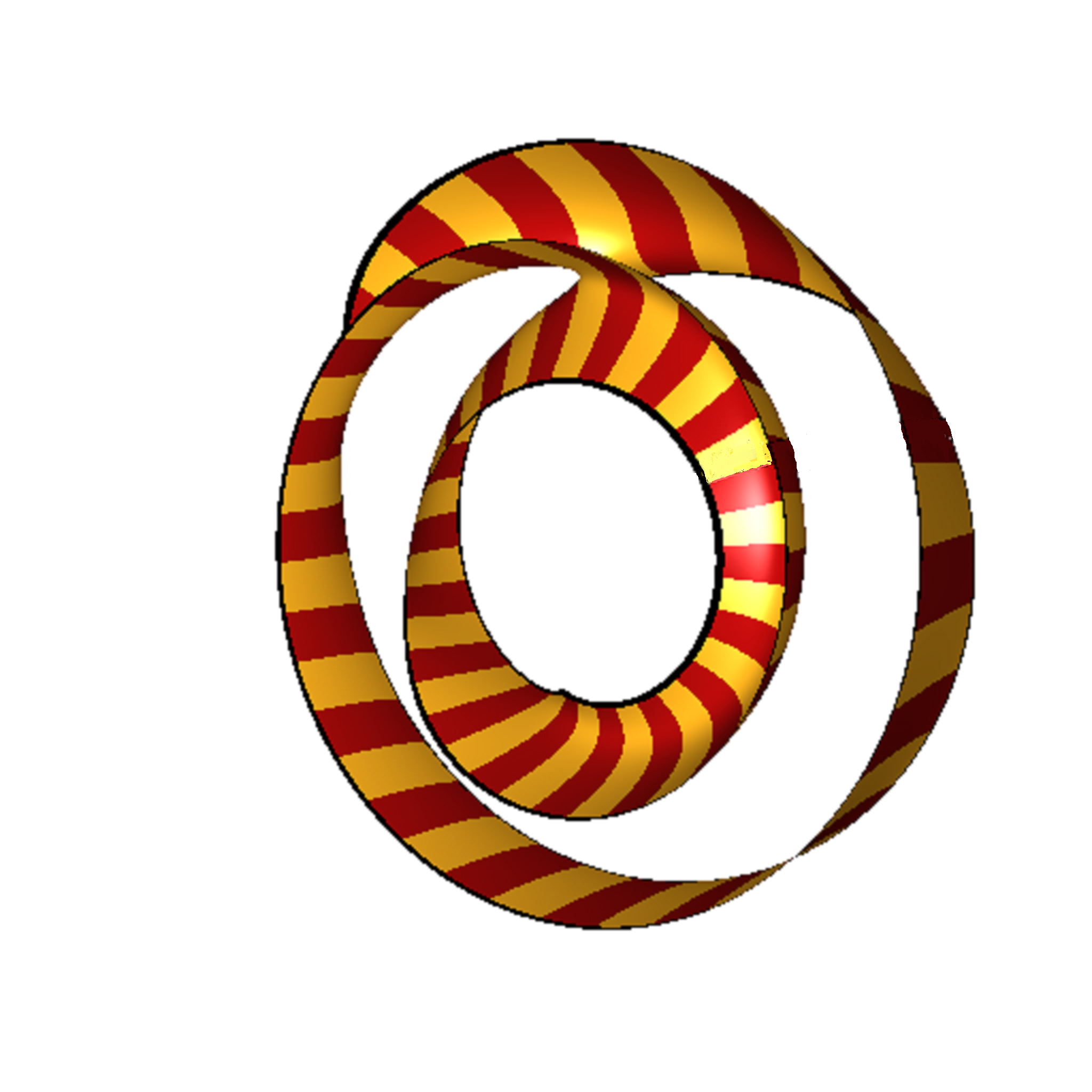} &
		\includegraphics[width=.3\linewidth]{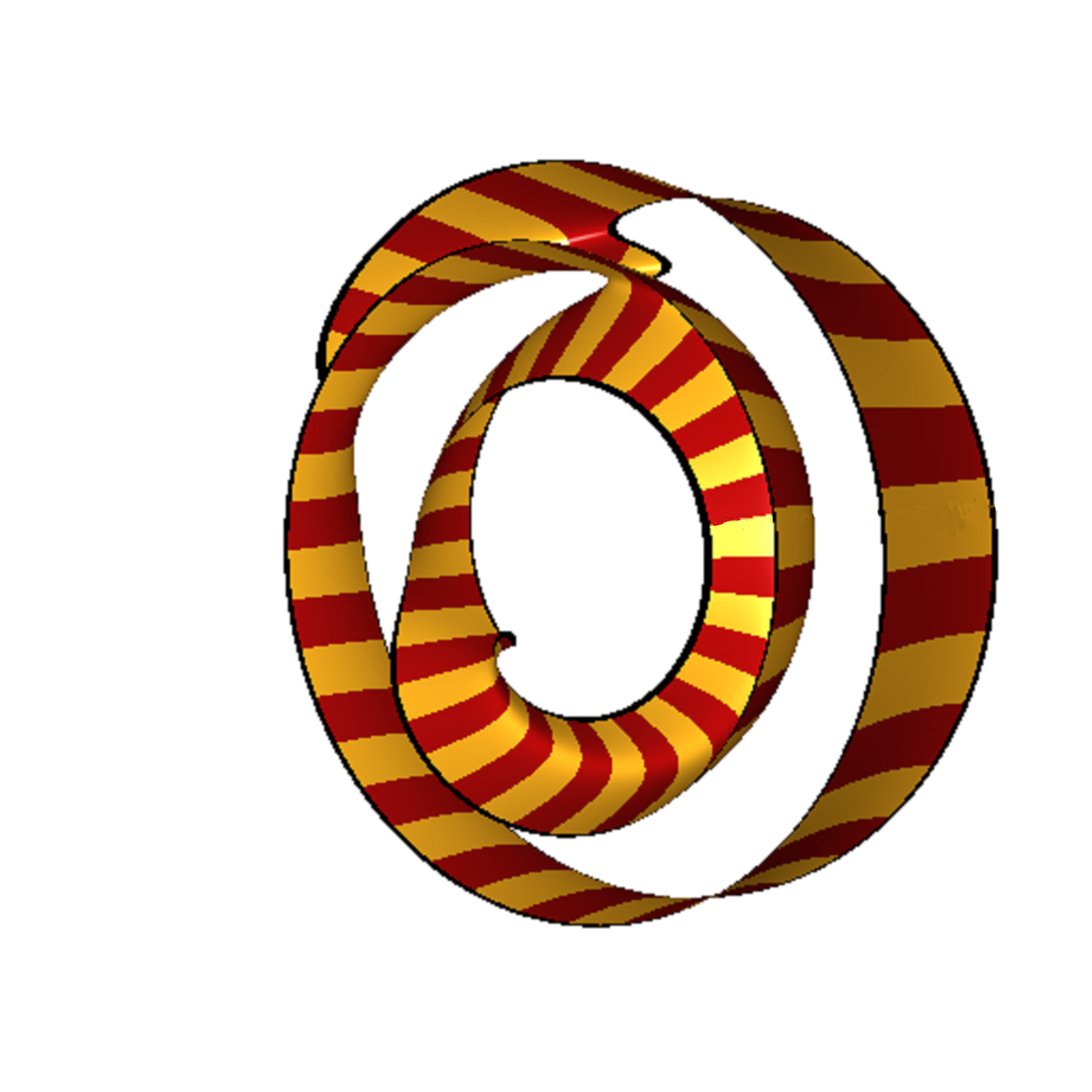}
		\\
		\small $B^{(0)}$ & \small $B^{(0.5)}$ & \small $B^{(1)}=C$
	\end{tabular}
\caption{Deformation of a regular two-point blowup with constant sign distribution}
\end{figure}
\end{exas}

\noindent {\rm \bf{Two Families of Regular Three-point Blowups.}} {\rm Up to now, we have seen examples of families of regular $n$-point blowups for $n =1,2$ and $n=4$ (see Figure 3, Figures 6 and 7 and Figure 4 respectively). We now aim to present two families of regular 3-point blowups.  As above we choose $\rho =2, r = 4, U = \{(x,y) \in \mathbb{R}^2 \mid x^2 + y^2 < 4\}$ for our visualization. }

\begin{exas}\label{Example.Reg.3-Point} {\rm (A) We consider the following example of \cite{Kor} given by:}
	
	$$B := \mathrm{Bl}_U(\underline{f}), \mbox{ {\rm with} } f_0 := \frac{1}{2}({\bf x}-1) + {\bf y}^2 \mbox{ {\rm and} } f_1 := ({\bf x} + \frac{1}{2}){\bf y}.$$
	{\rm We have}
	$$Z = Z_U(\underline{f}) = \{p_1,p_2,p_3\} \mbox{ {\rm with} } p_1 = (-\frac{1}{2},\frac{\sqrt{3}}{2}),  p_2 = (-\frac{1}{2},-\frac{\sqrt{3}}{2}), p_3 = (1,0)$$
	
	\noindent{\rm and hence  $Z$ is the set of vertices of an equilateral triangle centered at the origin $\underline{0} \in \mathbb{R}^2.$ Moreover, it holds} 
	
	$$\mathrm{det}\big(\partial\underline{f}\big)(p_1) = \mathrm{det}\big(\partial\underline{f}\big)(p_2) = -\frac{3}{2} \mbox{ {\rm and} } \mathrm{det}\big(\partial\underline{f}\big)(p_3) = \frac{3}{2}.$$
	
	\noindent{\rm So $B$ is a regular three-point blowup. The sign distribution and hence the embedded isomorphism type of $B$ is given by}
	
	$$\mathrm{sgn}_B(p_i) = \begin{cases} -1, &\mbox{ {\rm for} } i=1,2\\ 1, &\mbox{ {\rm for} } i =3. \end{cases}$$
	
	\noindent{\rm So, in this case we have a \textit{regular three-point blowup with non-constant sign distribution.}}
	
	\noindent{\rm  We consider the family of matrices}
	$$\big(M^{(t)}\big)_{t \in [0,1]} \mbox{ {\rm with} } M^{(t)} := \begin{pmatrix} t{\bf x} + (1-t) & -2t \\ 3t & t{\bf y} + (1-t)\end{pmatrix}.$$ 
	{\rm  As $\mathrm{det}(M^{(t)}) = (7+{\bf x}{\bf y} - {\bf x} - {\bf y})t^2 + ({\bf x} + {\bf y} - 2)t +1,$ and hence $\mathrm{det}(M^{(t)})(p) > 0$ for all $(x,y) = p \in U$ and all $t \in [0,1],$ it follows that}
	$\big(B^{(t)} := \mathrm{Bl}_U(\underline{f}M^{(t)})\big)_{t \in [0,1]}$ 
	{\rm is an isotopic family of regular three-point blowups which non-constant sign distribution, whose visualization is presented in 
		Figure 8 for $t= 0,  0.33, 0.5,  1.$}
	
	\begin{figure}
		\centering
		\begin{tabular}{cc}
			\includegraphics[width=.3\linewidth]{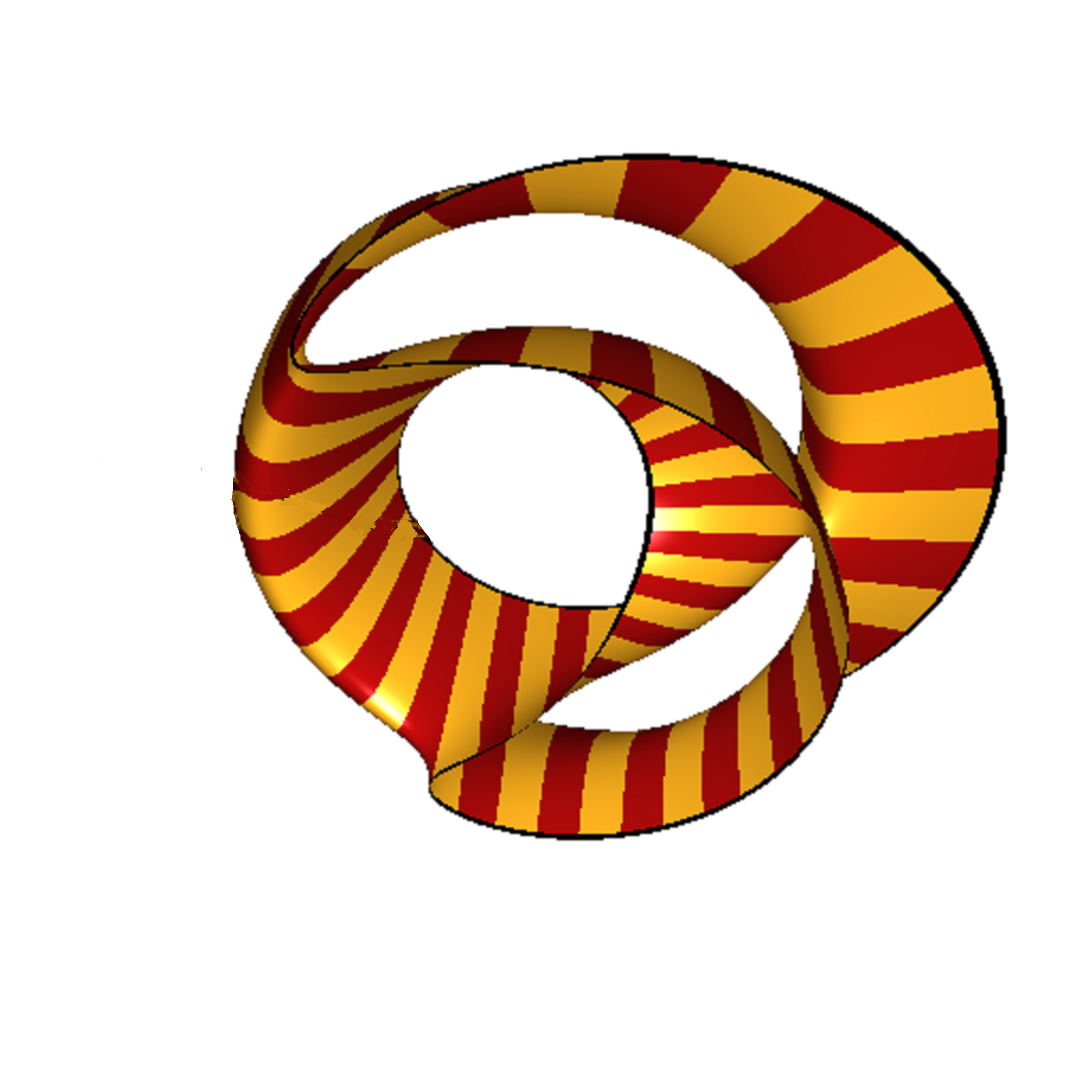} &  \includegraphics[width=.3\linewidth]{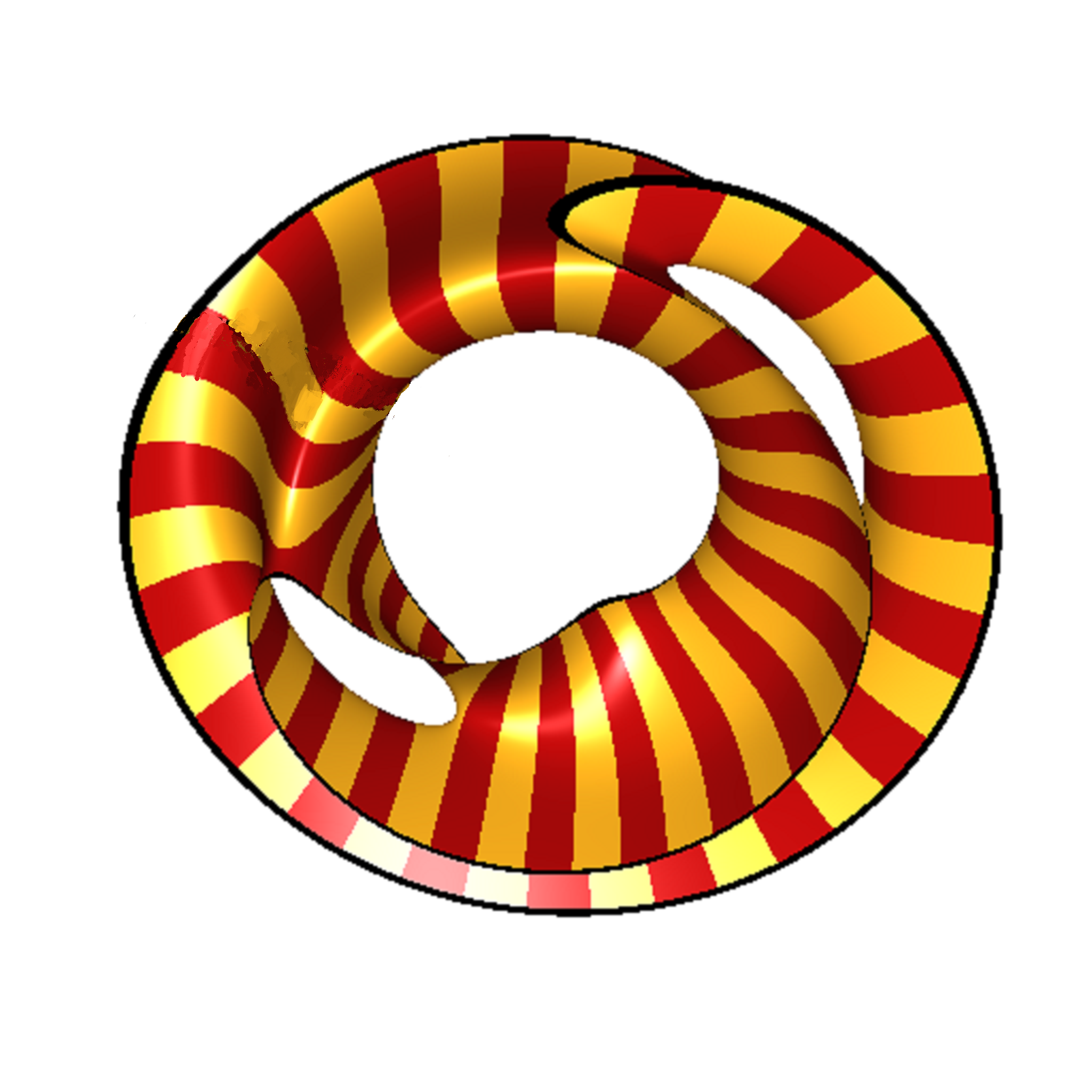} 
			\\ 
			\small $B^{(0)}= B$ &  \small $B^{(0.33)}$  
		\end{tabular} 
		\begin{tabular}{cc}
			\includegraphics[width=.3\linewidth]{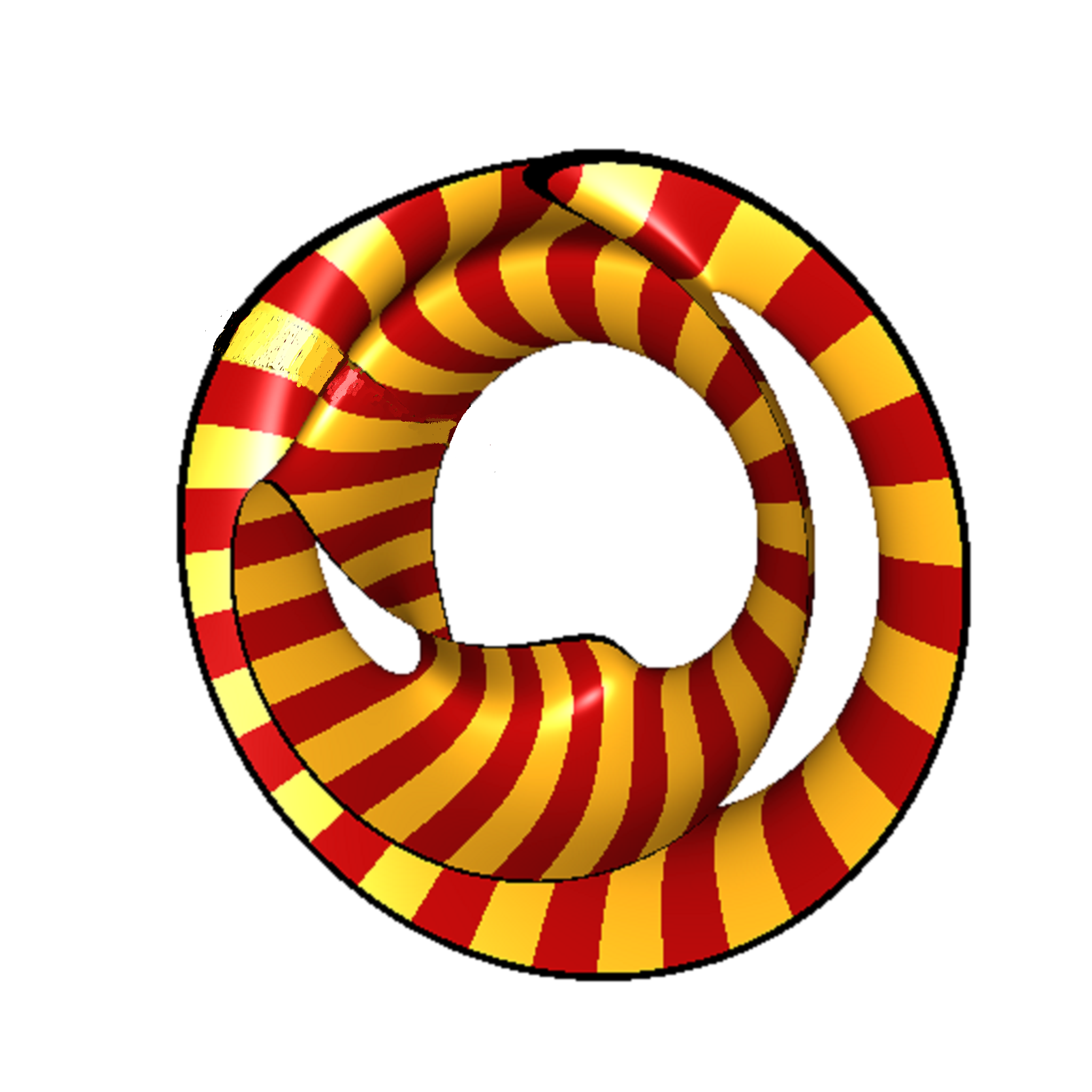} & 
			\includegraphics[width=.3\linewidth]{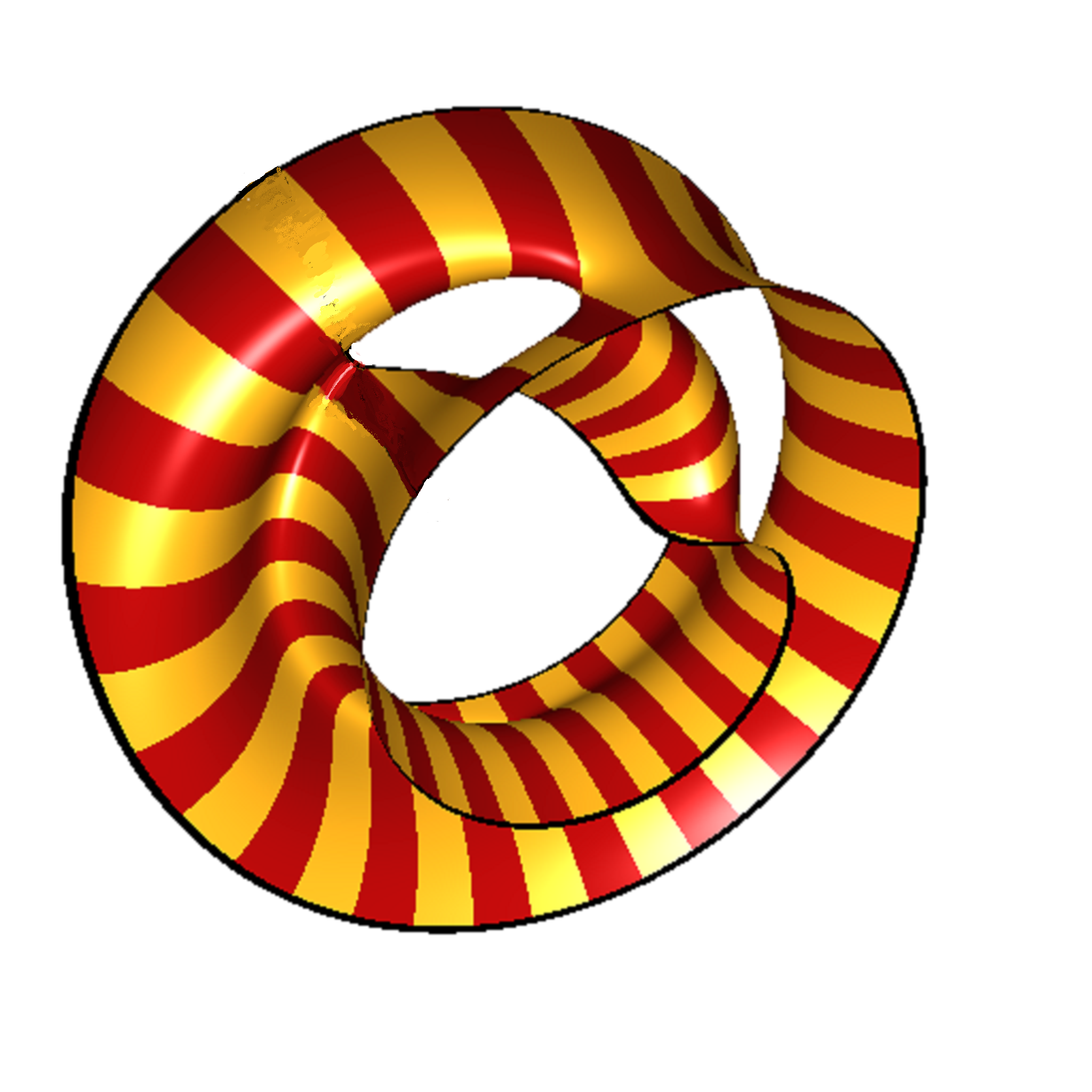} 
			\\
			\small $B^{(0.5)}$ & \small $B^{(1)}$ 
		\end{tabular}
		\caption{Deformation of a regular three-point blowup with non-constant sign distribution}
	\end{figure}

	{\rm (B) We now aim to present a family of regular three-point blowups with constant sign distribution. We choose} 
	$$Z := \{p_1 = (x_1,y_1) = (-\frac{1}{2},\frac{\sqrt{3}}{2}), p_2 = (x_2,y_2) = (-\frac{1}{2},-\frac{\sqrt{3}}{2}), p_3 = (x_3,y_3) = (1,0)\}$$ 
	{\rm as in part (A). Our first aim is to find a strongly regular pair $\underline{f} = (f_0,f_1) \in \mathbb{R}[{\bf x},{\bf y}]^{2}$ with respect to $Z$ on $U$ 
		(see Definition~\ref{Definition EI2}) such that $\mathrm{det}\big(\partial \underline{f}\big)(p_i) = 1$ for $i=1,2,3.$ We do this according to the procedure 
		suggested in the proof of Lemma~\ref{Lemma EI1}, but with the r\^oles of ${\bf x},$ ${\bf y}$ and of $f_0, f_1$ exchanged respectively. We thus set}
	$$f_1 = \prod_{i=1}^3({\bf y} - y_i) = {\bf y}^3 - \frac{3}{4}{\bf y} = {\bf y}({\bf y}^2-\frac{3}{4}) \mbox{ {\rm and} } f_0 = h({\bf y})\big({\bf x} - g({\bf y})\big) 
	\mbox{ {\rm with} }$$ 
	$$\mathrm{deg}(h), \mathrm{deg}(g) \leq 2, \mbox{ {\rm and} } g(y_i) = x_i, \quad h(y_i) = \frac{1}{\prod_{j \neq i}(y_i - y_j)} \mbox{ {\rm for} } i=1,2,3.$$
	{\rm  So}
	$$g({\bf y}) = -2{\bf y}^2 +1 \mbox{ {\rm and} } h({\bf y}) = \frac{4}{3}(2{\bf y}^2 - 1), \mbox{ {\rm thus} }$$
	$$f_0 = \frac{4}{3}(2{\bf y}^2-1)({\bf x} + 2{\bf y}^2 -1) = \frac{4}{3}(4{\bf y}^4 + 2{\bf y}^2{\bf x} - 4{\bf y}^2 - {\bf x}+1).$$
{\rm Now, we have $Z_{\mathbb{R}^2}(\underline{f}) = \{p_1,p_2,p_3\}$ 
			and $\mathrm{det}(\partial \underline{f}) = 4(2{\bf y}^2 -1)({\bf y}^2 - \frac{1}{4}),$ so that 
			$\mathrm{det}(\partial \underline{f})(p_i) = 1$ for $i=1,2,3.$ Therefore $B := \mathrm{Bl}_U(\underline{f})$ 
			is a 
			\textit{regular three-point blowup with constant sign distribution} $\mathrm{sgn}_B(p_i) = 1$ for $i=1,2,3$.}\\
	
	{\rm Our present example illustrates at this point, that the method suggested in the proof of Lemma~\ref{Lemma EI1} tends to furnish pairs of polynomials
		which may be simplified without changing the sign distribution (and hence the isomorphism type) in $\mathfrak{Bl}_U^{\mathrm{reg}}(Z).$ Namely, by setting}
	$$h_0 := \frac{3}{4}f_0 - 4{\bf y}, f_1 =  2{\bf x}{\bf y}^2 - {\bf y}^2 - {\bf x} +1 \mbox{ {\rm and} } h_1 := 4 f_1 = 4{\bf y}^3 - 3{\bf y}$$
	\noindent{\rm we get indeed $Z_{\mathbb{R}^2}(\underline{h}) = \{p_1,p_2,p_3\}$ and $\mathrm{det}(\partial \underline{h}) = 3(2{\bf y}^2-1)(4{\bf y}^2-1)$ so that  
		$\mathrm{det}\big(\partial \underline{h}(p_i)\big) > 0$ and hence $\mathrm{sgn}_{\underline{h}}(p_i) = \mathrm{sgn}_{\underline{f}}(p_i) = 1$ for $i=1,2,3.$}
	
	{\rm For a better visualization of the blowup $\mathrm{Bl}_U(\underline{h})$ we modify it slightly by 
		interchanging the two indeterminates ${\bf x},$ ${\bf y}$ and the two polynomials $h_0$ and $h_1$ 
		(which interchanges the coordinates of the common zeros of the two polynomials, and does not affect their 
		Jacobian determinant -- and hence preserves the (constant) sign distribution of the corresponding blowup), and by multiplying the first of them by 
		$\frac{1}{3}$ (which gives a dilatation of the blowup in the "direction of the fibers"). 
		So, we shall consider the blowup $B = \mathrm{Bl}_U(\underline{g})$ with $\underline{g} = (g_0,g_1),$}
	\[
	g_0 = {\bf x}({\bf x}^2-\frac{3}{4}) \text{ {\rm and} } g_1 = 
	2{\bf x}^2{\bf y} - {\bf x}^2 - {\bf y} +1 		
	\]
	{\rm under the deformation given by the family of matrices $M^{(t)}$ of part (A). This time, for the sake of virtual simplicity, we present with our method of visualization only the single blowup $B^{(t)} = \mathrm{Bl}_U(\underline{g}M^{(t)})$
		for $t = 0.5$ (see Figure 9) and the two affine charts of the blowup $B^{(0)}$ given respectively by $g_1({\bf x},{\bf y}) - {\bf z}g_0({\bf x},{\bf y}) = 0$ and $g_0({\bf x},{\bf y}) - {\bf z}g_1({\bf x},{\bf y}) = 0$ (see Figure 10). The two charts were visualized 
		by means of {\sc Mathematica}.}

	\begin{figure}
		\centering
		
		\includegraphics[width=.35\linewidth]{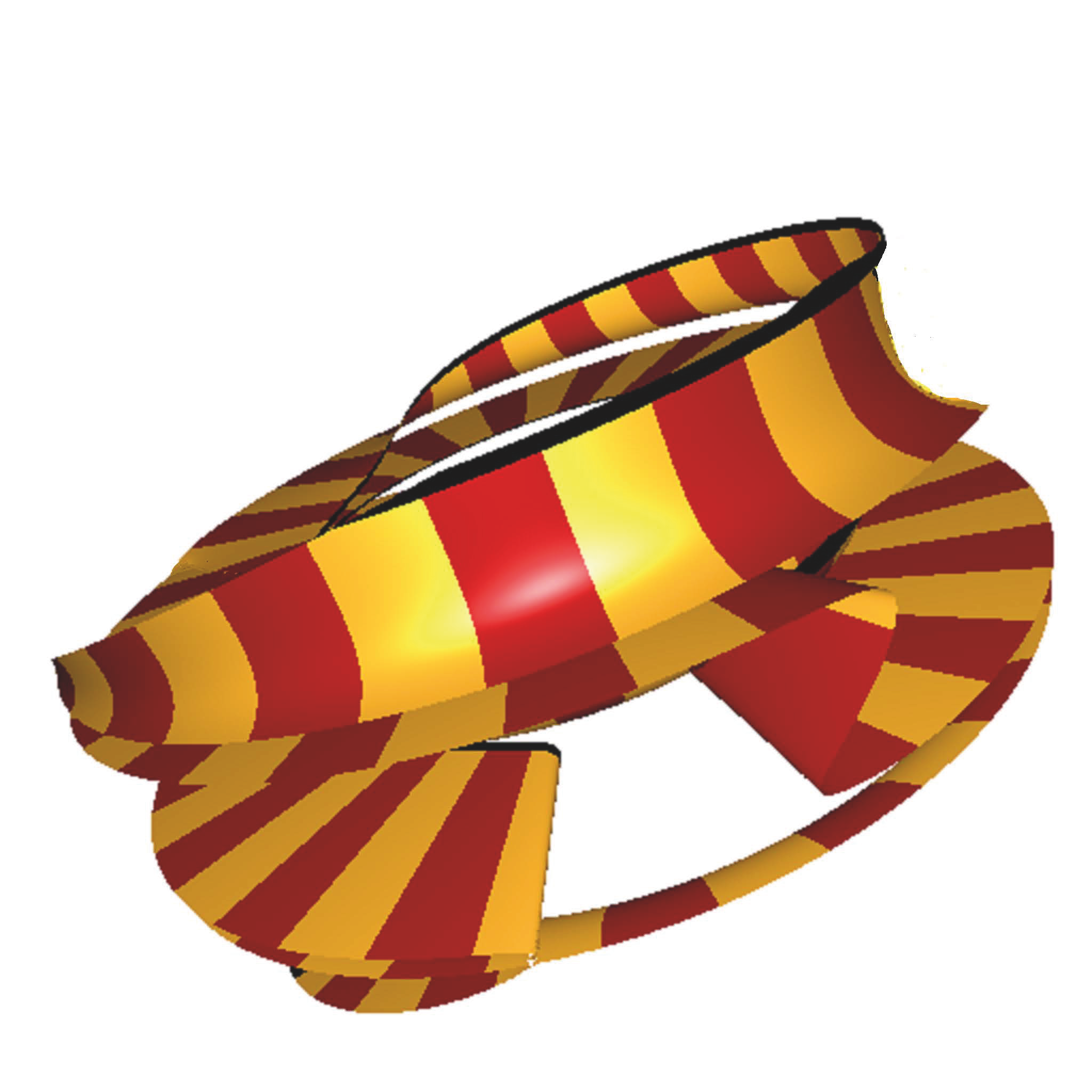} 				 
		
		\caption{Deformation of a regular three-point blowup with constant sign distribution for $t = 0.5$}
		
	\end{figure}
	\begin{figure}
		
		\begin{tabular}{cc}
			\includegraphics[width=.35\linewidth]{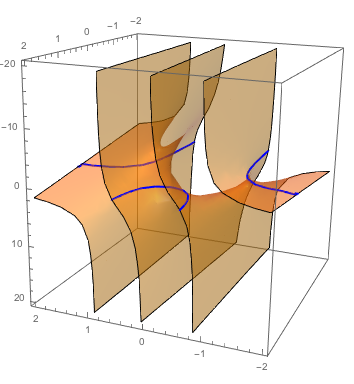} &  \includegraphics[width=.32\linewidth]{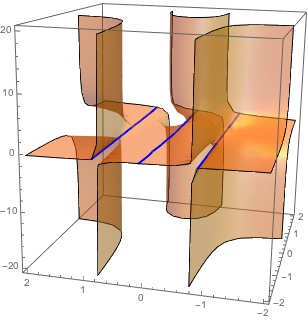} 
			\\ 
			{\rm{1. Chart }} &   {\rm{2. Chart }} 
		\end{tabular} 
		
		\caption{Two charts of a regular three-point blowup with constant sign Distribution} 
		
	\end{figure}
	
\end{exas}

\medskip


\end{document}